\newcommand{\rrVert}{\Vert}
\newcommand{\rrvert}{\vert}
\newcommand{\llVert}{\Vert}
\newcommand{\llvert}{\vert}
\newcommand{\C}{\mathbb{C}}
\newcommand{\M}{\mathbb{M}}
\newcommand{\N}{\mathbb{N}}
\newcommand{\Q}{\mathbb{Q}}
\newcommand{\R}{\mathbb{R}}
\newcommand{\cA}{{{\mathcal A}}}
\newcommand{\cB}{{{\mathcal B}}}
\newcommand{\cE}{{{\mathcal E}}}
\newcommand{\cG}{{{\mathcal G}}}
\newcommand{\cL}{{{\mathcal L}}}
\newcommand{\cN}{{{\mathcal N}}}
\newcommand{\cV}{{{\mathcal V}}}
\newcommand{\ff}{\mathbf{f}}
\newcommand{\ii}{\mathbf{i}}
\newcommand{\mm}{\mathbf{m}}
\newcommand{\nn}{\mathbf{n}}
\newcommand{\rr}{\mathbf{r}}
\newcommand{\ggamma}{\boldsymbol{\gamma}}
\newcommand{\mmu}{\boldsymbol{\mu}}
\newcommand{\ppi}{\boldsymbol{\pi}}
\newcommand{\sfg}{{\mathsf g}}
\newcommand{\sfA}{{\mathsf A}}
\newcommand{\sfB}{{\mathsf B}}
\newcommand{\sfC}{{\mathsf C}}
\newcommand{\sfD}{{\mathsf D}}
\newcommand{\sfF}{{\mathsf F}}
\newcommand{\sfH}{{\mathsf H}}
\newcommand{\sfP}{{\mathsf P}}
\newcommand{\frh}{{\mathfrak h}}
\newcommand{\rme}{{\mathrm e}}
\newcommand{\rmB}{{\mathrm B}}
\newcommand{\rmC}{{\mathrm C}}
\newcommand{\rmD}{{\mathrm D}}
\newcommand{\rmI}{{\mathrm I}}
\newcommand{\rmR}{{\mathrm R}}
\newcommand{\rmS}{{\mathrm S}}
\newcommand{\rmT}{{\mathrm T}}
\newcommand{\supp}{\operatorname{supp}} 
\newcommand{\Lip}{\operatorname{Lip}} 
\renewcommand{\d}{{\mathrm d}}
\newcommand{\dt}{{\d t}}
\newcommand{\Leb}[1]{{\mathscr L}^{#1}}
\newcommand{\down}{\downarrow} 
\newcommand{\up}{\uparrow}
\newcommand{\weakto}{\rightharpoonup}
\newcommand{\eps}{\varepsilon}
\newcommand{\nchi}{\chi}
\newcommand{\AC}[3]{\mathrm{AC}^{#1}(#2;#3)}
\newcommand{\entv}{\mathrm{Ent}_{\mm}} 
\renewcommand{\mm}{\mathfrak m}
\renewcommand{\nn}{\mathfrak n}
\newcommand{\Probabilities}[1]{\mathscr P(#1)} 
\newcommand{\prob}{\Probabilities}
\newcommand{\ProbabilitiesTwo}[1]{\mathscr P_2(#1)}
\newcommand{\probt}{\ProbabilitiesTwo}
\newcommand{\res}{\mathop{\hbox{\vrule height 7pt width .5pt depth 0pt
\vrule height .5pt width 6pt depth 0pt}}\nolimits}
\renewcommand{\C}{\mathsf{Ch}}
\newcommand{\EVI}{{\mathrm{EVI}}}
\newcommand{\RCD}[2]{\mathrm{RCD}(#1,#2)}
\newcommand{\CD}[2]{\mathrm{CD}(#1,#2)}
\newcommand{\BE}[2]{\mathrm{BE}(#1,#2)} 
\newcommand{\DeltaE}{\Delta_\cE}
\renewcommand{\C}{\mathsf{Ch}}
\newcommand{\heat}[2]{\sP{#1}#2}
\newcommand{\weaksto}{\stackrel{\star}{\rightharpoonup}}
\newcommand{\sfz}{\mathsf z}
\newcommand{\cZ}{\mathcal Z}
\renewcommand{\cV}{\mathbb{V}}
\renewcommand{\cG}{\mathbb{G}}
\newcommand{\cGz}{\cG_{\infty}}
\newcommand{\cVu}{\cV^1_{\infty}}
\newcommand{\cVd}{\cV^2_{\infty}}
\newcommand{\sfdZ}{\mathsf{d}}
\newcommand{\mmZ}{\mm}
\newcommand{\cEZ}{\cE}
\newcommand{\cVZ}{\cV}
\newcommand{\sP}[1]{\sfP_{#1}}
\newcommand{\sH}[1]{\sfH_{#1}}
\newcommand{\tsP}[1]{\tilde{\sfP}_{#1}}
\newcommand{\MD}{\textup{MD}}
\newcommand{\ED}{\textup{ED}}
\newcommand{\due}{}
\newcommand{\uno}{\tfrac12}
\newcommand{\mezzo}{}
\newcommand{\otto}{4}
\newcommand{\st}{s,t}
\newcommand{\eqref}[1]{(\ref{#1})}
\renewcommand{\textsc}{}
\newtheorem{theorem}{Theorem}[section]
\newtheorem{lemma}[theorem]{Lemma}
\newtheorem{corollary}[theorem]{Corollary}
\newtheorem{proposition}[theorem]{Proposition}
\newcommand{\fraca}[2]{{#1}/{#2}}
\begin{document}
\begin{frontmatter}

\title{Bakry--\'Emery curvature-dimension condition and Riemannian
Ricci curvature bounds}
\runtitle{Bakry--\'Emery curvature-dimension condition}

\begin{aug}
\author[A]{\fnms{Luigi} \snm{Ambrosio}\thanksref{T1,T3}\ead[label=e1]{l.ambrosio@sns.it}},
\author[B]{\fnms{Nicola} \snm{Gigli}\ead[label=e2]{giglin@math.jussieu.fr}}
\and
\author[C]{\fnms{Giuseppe} \snm{Savar\'e}\corref{}\thanksref{T3}\ead[label=e3]{giuseppe.savare@unipv.it}\ead[label=u3,url]{http://www.imati.cnr.it/savare}}
\runauthor{L. Ambrosio, N. Gigli and G. Savar\'e}
\affiliation{Scuola Normale Superiore, Pisa, Universit\'e Pierre et
Marie Curie--Paris 6 and~Universit\`a di Pavia}
\address[A]{L. Ambrosio\\
Scuola Normale Superiore\\
Piazza dei Cavalieri, 7\\
56126 Pisa\\
Italy\\
\printead{e1}} 
\address[B]{N. Gigli\\
Institut de Math\'{e}matiques de Jussieu\\
Case 247, 4 Place Jussieu \\
75252 Paris Cedex 05\\
France\\
\printead{e2}}
\address[C]{G. Savar\'e\\
Dipartimento di Matematica\\
Universit\`a di Pavia\\
Via Ferrata 1\\
27100 Pavia\\
Italy\\
\printead{e3}\\
\printead{u3}}
\end{aug}
\thankstext{T1}{Supported by ERC ADG GeMeThNES.}
\thankstext{T3}{Supported in part by
PRIN10-11 grant from MIUR for the project \emph{Calculus of Variations}.}

\received{\smonth{9} \syear{2012}}
\revised{\smonth{12} \syear{2013}}

%
\begin{abstract}
The aim of the present paper is to bridge the gap between
the Bakry--\'Emery and the Lott--Sturm--Villani
approaches to provide synthetic and abstract notions of lower Ricci
curvature bounds.

We start from a strongly local Dirichlet form $\cE$ admitting a \emph
{Carr\'e du champ} $\Gamma$
in a Polish measure space $(X,\mm)$ and a canonical distance ${\mathsf d}
_\cE$
that induces the original topology of $X$.
We
first characterize the distinguished class of
\emph{Riemannian Energy measure spaces}, where $\cE$ coincides with
the Cheeger energy
induced by ${\mathsf d}_\cE$ and
where every function $f$ with $\Gamma(f)\le1$ admits
a continuous representative.

In such a class, we show
that if $\cE$ satisfies a suitable weak form of
the \emph{Bakry--\'Emery curvature dimension condition}
$\BE K\infty$ then the metric measure space $(X,{\mathsf d},\mm)$ satisfies
the Riemannian Ricci curvature bound $\RCD K\infty$ according to
[\textit{Duke Math. J.} \textbf{163}  (2014) 1405--1490], thus showing the
equivalence of the two notions.

Two applications are then proved: the tensorization property for Riemannian
Energy spaces satisfying the Bakry--\'Emery $\BE KN$ condition
(and thus the corresponding one for $\RCD K\infty$ spaces without
assuming nonbranching)
and the stability of $\BE KN$ with respect to
Sturm--Gromov--Hausdorff convergence.
\end{abstract}

%
\begin{keyword}[class=AMS]
\kwd[Primary ]{49Q20}
\kwd{47D07}
\kwd[; secondary ]{30L99}
\end{keyword}
\begin{keyword}
\kwd{Ricci curvature}
\kwd{Barky--\'Emery condition}
\kwd{metric measure space}
\kwd{Dirichlet form}
\kwd{Gamma calculus}
\end{keyword}

\pdfkeywords{49Q20, 47D07, 30L99, Ricci curvature, Barky-Emery condition, metric measure space, Dirichlet form,
Gamma calculus}
\end{frontmatter}

\tableofcontents[level=2]


\section{Introduction}\label{sec1}

Besides its obvious geometric relevance,
spaces with Ricci curvature bounded from
below play an important role in many probabilistic and analytic
investigations that reveal various deep connections between different
fields.

Starting from the celebrated paper by \textsc{Bakry--\'Emery} \cite
{Bakry-Emery84},
the curvature-dimension condition based on
the $\Gamma_2$-criterium in Dirichlet spaces
provides crucial tools for proving
refined estimates on Markov semigroups and many
functional inequalities, of Poincar\'e, Log-Sobolev, Talagrand
and concentration type
(see, e.g., \cite
{Ledoux01,Ledoux04,Ledoux11,Bakry06,Ane-et-al00,Bakry-Gentil-Ledoux14}).

This general functional-analytic approach is also well suited to deal
with genuinely infinite dimensional settings with
applications to Wiener measure on the paths of Brownian motion
with values in a Riemannian manifold, as in, for example, \cite{Bakry-Ledoux96}.
In fact, Ricci curvature also arises in Bismut-type formula
\cite{Hsu02} and its applications to gradient estimates
\cite{Arnaudon-Plank-Thalmaier03,Arnaoudon-Driver-Thalmaier07},
and to the construction of couplings between
Brownian motions \cite{Kendall86,Kuwada-Sturm13}.

The importance of curvature bounds in the framework of optimal transport
has been deeply analyzed in
\cite{Otto-Villani00,Cordero-McCann-Schmuckenschlager01,Sturm-VonRenesse05}.
These and other important results led \textsc{Sturm}
\cite{Sturm06I,Sturm06II}
and \textsc{Lott--Villani} \cite{Lott-Villani09}
to introduce a new synthetic notion of the curvature-dimension
condition,
in the general framework of metric-measure spaces.

The aim of the present paper is to bridge the gap between
the Bakry--\'Emery and the Lott--Sturm--Villani
approaches to provide synthetic and abstract notions of lower Ricci
curvature bounds.
In order to make this statement more precise, let us briefly
review the main points of both settings.

\textit{The Bakry--\'Emery condition $\BE KN$:
Dirichlet forms and $\Gamma$-calculus.}
The first approach is based on the functional $\Gamma$-calculus
developed by
\textsc{Bakry--\'Emery} since \cite{Bakry-Emery84};
see \cite{Bakry06,Bakry-Ledoux06}.

A possible starting point is a
local and symmetric Dirichlet form $\cE$ on the measure space
$(X,\cB,\mm)$ with dense domain $D(\cE)\subset L^2(X,\mm)$,
and the associated Markov semigroup $(\sP t)_{t\ge0}$
on $L^2(X,\mm)$ with generator $\DeltaE$
(general references are \cite
{Fukushima-Oshima-Takeda11,Ma-Rockner92,Bouleau-Hirsch91}).
In a suitable algebra $\cA$
of functions dense in the domain $D(\DeltaE)$ of $\DeltaE$
one introduces the \emph{Carr\'e du champ}
\[
\Gamma(f,g):=\tfrac{1}{2} \bigl(\DeltaE(fg)-f\DeltaE g-g\DeltaE f \bigr),
\qquad f,g\in\cA,
\]
related to $\cE$ by the local representation formula
\[
\cE(f,g)=\int_X \Gamma(f,g)\,\d\mm\qquad
\mbox{for every }f,g\in\cA.
\]
One also assumes that $\DeltaE$ is a
diffusion operator, that is, with the notation $\Gamma(f):=\Gamma
(f,f)$, it holds
%
\begin{eqnarray}
\DeltaE\phi(f)=\phi'(f)\DeltaE f+\phi''(f)
\Gamma(f)
\nonumber
\\
\eqntext{\mbox{for every }f\in\cA,\phi\in\rmC^2(\R) \mbox{ with
bounded derivatives}.}
\end{eqnarray}
The model example is provided by a smooth Riemannian manifold
$(\mathbb M^d,\sfg)$ endowed with the measure
$\mm:=\rme^{-V}\operatorname{Vol}_\sfg$ for a given smooth potential\break $V\dvtx
\mathbb M^d\to\R$. In this case, one typically chooses
$\cA=\rmC^\infty_c(\mathbb M^d)$ and
\begin{eqnarray*}
\cE(f,g)&=&\int_{\mathbb M^d} \langle\nabla f,\nabla g
\rangle_\sfg\,\d\mm\quad\mbox{so that}
\\
\Gamma(f)&=&\llvert \nabla f\rrvert _\sfg^2\quad
\mbox{and}\quad \DeltaE=\Delta_\sfg-\langle\nabla V,\nabla\cdot\rangle
_\sfg,
\end{eqnarray*}
where $\Delta_\sfg$ is the usual Laplace--Beltrami operator on $\M$.
This fundamental example shows that $\Gamma$ carries the
metric information of $\mathbb M^d$, since one can recover the
Riemannian
distance ${\mathsf d}_\sfg$ in $\mathbb M^d$ by the formula
%
\begin{equation}
\label{eq100} \mathsf{d}_\sfg(x,y)=\sup \bigl\{\psi(y)-\psi(x)\dvtx
\psi\in\cA, \Gamma(\psi) \le 1 \bigr\},\qquad x,y\in\mathbb M^d.
\end{equation}
A further iteration yields the $\Gamma_2$ operator, defined by
%
\begin{equation}
\label{eq97} 2\Gamma_2(f)=\DeltaE\Gamma(f)-2\Gamma(f,\DeltaE f),
\qquad f\in\cA.
\end{equation}
In the above example, Bochner's formula yields
\[
\Gamma_2(f)=\llVert \operatorname{Hess}_\sfg f
\rrVert _\sfg^2+ (\operatorname{Ric}_\sfg+
\operatorname{Hess}_\sfg V ) (\nabla f,\nabla f),
\]
and one obtains the fundamental inequality
%
\begin{equation}
\label{eq99} \Gamma_2(f)\ge K \Gamma(f)+\frac{1}N(\DeltaE
f)^2\qquad\mbox{for every }f\in\cA,
\end{equation}
if the quadratic form associated to the tensor
$\operatorname{Ric}_\sfg+\operatorname{Hess}_\sfg V $ is bounded from below by
$K \sfg+\frac{1}{N-d}\nabla V\otimes\nabla V$ for some
$K\in\R$ and $N> d$.
When $V\equiv0$, it is possible to show that
$(\mathbb M^d,\sfg)$ has Ricci curvature bounded from below by $K$
iff \eqref{eq99} is satisfied for $N\ge d$.

It is then natural to
use \eqref{eq99} as a definition of curvature-dimension bounds
even in the abstract setting:
it is the so-called \emph{Bakry--\'Emery curva\-ture-dimension condition},
that we denote here by $\BE KN $.

One of the most remarkable applications of \eqref{eq99} is
provided by pointwise gradient estimates for the
Markov semigroup (see, e.g., \cite{Bakry06,Bakry-Ledoux06} for
relevant and deep applications).
Considering here only the case $N=\infty$, \eqref{eq99} yields
%
\begin{equation}
\label{eq101} \Gamma(\sP t f)\le\rme^{-2K t} \sP t \bigl(\Gamma(f) \bigr)
\qquad \mbox{for every }f\in\cA,
\end{equation}
a property that is essentially equivalent to $\BE K\infty$
(we refer to \cite{Wang11} for other formulations of
$\BE KN $ for Riemannian manifolds, see also the
next Section~\ref{subsecBE})
and involves only first order ``differential'' operators.

Up to the choice of an appropriate functional setting
[in particular, the algebra $\cA$ and
the distance ${\mathsf d}$ associated to $\Gamma$ as
in \eqref{eq100} play a crucial role],
$\Gamma$-calculus and curvature-dimension inequalities
provide a very powerful tool to establish many
functional inequalities and geometric properties, often in sharp form.

\textit{Lower Ricci curvature bounds by
optimal transport: The $\CD K\infty$ condition.}
A completely different approach to lower Ricci bounds
has been recently proposed by \textsc{Sturm} \cite
{Sturm06I,Sturm06II} and
\textsc{Lott--Villani} \cite{Lott-Villani09}:
here, the abstract setting
is provided by metric measure spaces $(X,{\mathsf d},\mm)$, where
$(X,{\mathsf d})$ is a separable, complete and length metric space and
$\mm$ is a nonnegative $\sigma$-finite Borel measure.
Just for simplicity, in this \hyperref[sec1]{Introduction} we also assume
$\mm(X)<\infty$, but the theory covers the case
of a measure satisfying the exponential growth condition
$\mm(B_r(x))\le M\exp(c r^2)$ for some constants $M,c\ge0$.

The Lott--Sturm--Villani theory (LSV in the following) is
based on the notion of
displacement interpolation \cite{McCann97},
a powerful tool of optimal transportation
that allows one to extend the notion of geodesic interpolation
from the state space $X$ to
the space of Borel probability measures $\probt X$
with finite quadratic moment.
Considering here
only the case $N=\infty$,
a metric measure space $(X,{\mathsf d},\mm)$
satisfies the LSV lower Ricci curvature bound $\CD K\infty$
if
the relative entropy functional
%
\begin{equation}
\label{eq102} \operatorname{Ent}_{\mm}(\mu):=\int_X
f\log f\,\d\mm,\qquad\mu=f\mm,
\end{equation}
is displacement $K$-convex in the Wasserstein space
$(\probt X, W_2)$
(see \cite{Villani09,AGS08} and Section~\ref{subsecpreliminaries} below).
This definition is consistent with
the Riemannian case \cite{Sturm-VonRenesse05},
and thus equivalent to $\BE K\infty$
in such a smooth framework.

Differently from the Bakry--\'Emery's approach,
the LSV theory does not originally involve energy functionals or Markov
semigroups, but it is intimately connected to the
metric ${\mathsf d}$ (through the notion of displacement interpolation) and
to the measure $\mm$ [through the entropy functional \eqref{eq102}].
Besides many useful geometric and functional applications of this
notion \cite{Lott-Villani-Poincare,Rajala11,Gigli10},
one of its strongest features is its stability under measured\vadjust{\goodbreak}
Gromov--Hausdorff
convergence \cite{Fukaya87},
also in the weaker transport-formulation
proposed by \textsc{Sturm}
\cite{Sturm06I}.\vadjust{\goodbreak}

Starting from the $\CD K\infty$ assumption,
one can then construct an evolution semigroup {$(\sfH_t)_{t\ge0}$}
on the
convex subset of $\probt X$ given by
probability measures with finite entropy
\cite{Gigli10}: it is the metric gradient flow of the entropy functional
in $\probt X$ \cite{AGS08}. Since also Finsler geometries
(as in the flat case of $\R^d$ endowed with a non-Euclidean norm)
can satisfy the $\CD K\infty$ condition,
one cannot hope in such a general setting that
{$\sfH_t$} are linear operators. Still, {$(\sfH_t)_{t\ge0}$} can be
extended to
a continuous semigroup
of contractions in
$L^2(X,\mm)$ (and in any $L^p(X,\mm)$-space),
which can also be characterized
as the $L^2(X,\mm)$-gradient flow
{$(\sP t)_{t\ge0}$}
of a convex and $2$-homogeneous
functional, the Cheeger energy
\cite{Cheeger00,AGS11a}, Section~4.1, Remark~4.7,
%
\begin{eqnarray}
\label{eq103} &&\C(f)
\nonumber
\\[-4pt]
\\[-12pt]
&&\qquad :=\inf \biggl\{\liminf_{n\to\infty}\frac{1}2\int
_X \llvert \rmD f_n\rrvert ^2\,\d
\mm\dvtx f_n\in\Lip_b(X), f_n\to f\mbox{ in
}L^2(X,\mm) \biggr\}
\nonumber
\end{eqnarray}
[here, $\Lip_b(X)$ denotes the space of
Lipschitz and bounded real functions defined in $X$
and $\llvert \rmD f\rrvert $ is the local Lipschitz constant, or slope,
of the Lipschitz function $f$, see Section~\ref{subsecpreliminaries}].

The remarkable identification {between $(\sfH_t)_{t\ge0}$ and $(\sP
t)_{t\ge0}$}
has been first
proposed and proved in Euclidean spaces by a seminal paper of
\textsc{Jordan--Kinderleher--Otto}
\cite{Jordan-Kinderlehrer-Otto98} and then extended to
Riemannian manifolds 
\cite{Erbar10,Villani09},
Hilbert spaces \cite{Ambrosio-Savare-Zambotti09},
Finsler spaces \cite{Sturm-Ohta-CPAM}, Alexandrov spaces
\cite{GigliKuwadaOhta10} and eventually
to $\CD K\infty$ metric measure spaces
\cite{AGS11a}.

\textit{Spaces with Riemannian Ricci
curvature bounded from
below:
The\break $\operatorname{RCD}(K,\infty)$ condition.}
Having the energy functional \eqref{eq103} and
the contraction semigroup $(\sP t)_{t\ge0}$ at our disposal, it is then
natural
to investigate when LSV spaces
satisfy $\BE K\infty$.
In order to attack this question, one has of course to clarify
when the Cheeger energy \eqref{eq103} is a Dirichlet (thus \emph{quadratic})
form on $L^2(X,\mm)$ [or, equivalently, when $(\sP t)_{t\ge0}$ is a semigroup
of \emph{linear} operators] and when this property is
also stable under Sturm--Gromov--Hausdorff convergence.

One of the most important results of \cite{AGS11b}
(see also \cite{AGMR12} for general $\sigma$-finite measures)
is that $\CD K\infty$ spaces with a quadratic
Cheeger energy can be equivalently characterized as
those metric measure spaces
where there
exists the Wasserstein gradient flow {$(\sfH_t)_{t\ge0}$}
of the entropy functional
\eqref{eq102} in the $\operatorname{EVI}_K$-sense.
This condition means that
for all initial data $\mu\in\probt X$
with $\supp\mu\subset\supp\mm$
there exists a locally Lipschitz curve $t\mapsto{\sfH_t} \mu\in
\probt X$
satisfying the
evolution variational inequality:
%
\begin{eqnarray}
\label{eqEVIK1} \frac{\d}{\d t}\frac{W_2^2({\sfH_t}\mu,\nu)}2+\frac{K}2W_2^2({
\sfH_t}\mu,\nu)+\entv({\sfH_t}\mu)\leq\entv(\nu)
\nonumber
\\[-8pt]
\\[-8pt]
\eqntext{\mbox{for a.e. $t\in (0,\infty)$}}
\end{eqnarray}
for all $\nu\in\probt{X}$ with $\entv(\nu)<\infty$.

Such a condition is denoted by
$\RCD K\infty$ and it is stronger than\break
$\CD K\infty$,
since the existence of an $\operatorname{EVI}_K$ flow solving
\eqref{eqEVIK1}
yields {both} the geodesic $K$-convexity of the entropy functional
$\entv$
\cite{Daneri-Savare08} and the linearity of $(\sH{t})_{t\ge0}$
\cite{AGS11b}, Theorem~5.1,
but it is still stable under Sturm--Gromov--Hausdorff convergence.
When it is satisfied, the metric measure space $(X,{\mathsf d},\mm)$
is called in \cite{AGS11b} a space with \emph{Riemannian} Ricci
curvature bounded from below by $K$.

In $\RCD K\infty$-spaces the Cheeger energy is
associated to a strongly local Dirichlet form $\cE_\C(f,f):=2\C(f)$
admitting a Carr\'e du champ $\Gamma$.
With the calculus tools developed in \cite{AGS11b}, it can be proved that
$\Gamma$ has a further equivalent representation
$\Gamma(f)=\llvert \rmD f\rrvert _w^2$ in terms
of the \emph{minimal weak
gradient} $\llvert \rmD f\rrvert _w$
of $f$. The latter is
the element of minimal $L^2$-norm
among all the possible weak limits of $\llvert \rmD f_n\rrvert $ in
the definition \eqref{eq103}.

It follows that $\cE_\C$ can also be expressed by
$\cE_\C(f,f)=\int_X \llvert \rmD f\rrvert _w^2\,\d\mm$
and the set of Lipschitz functions $f$ with $\int_X |\rmD
f|^2\,\d\mm<\infty$
is strongly dense in the domain of $\cE_\C$.
In fact,
the Dirichlet form $\cE_\C$ enjoys
a further upper-regularity property, common to every Cheeger energy
(\cite{AGS11c}, Section~8.3):
\begin{longlist}[({a})]
\item[(a)] for every $f\in D(\cE)$ there exist
$f_n\in D(\cE)\cap\rmC_b(X)$ and upper semicontinuous bounded
functions $g_n\dvtx X\to\R$
such that
\[
\Gamma(f_n)\le g_n^2,\qquad
f_n\to f \mbox{ in } L^2(X,\mm ) ,\qquad \limsup
_{n\to\infty}\int_X g_n^2
\,\d\mm\le\cE(f,f) . 
\]
\end{longlist}
Here and in the following, $\rmC_b(X)$ denotes the space of
continuous and bounded real functions defined on $X$.

\textit{From $\RCD K\infty$ to $\BE K\infty$.}
The previous properties of the Cheeger energy
show that
the investigation of Bakry--\'Emery curvature bounds
makes perfectly sense in
$\RCD K\infty$ spaces. One of the main results of \cite{AGS11b} connecting
these two approaches shows in fact that
$\RCD K\infty$ yields $\BE K\infty$ in the
gradient formulation \eqref{eq101} for every $f\in D(\cE_\C)$.

In fact, an even more refined result holds
(\cite{AGS11b}, Theorem~6.2), since
it is possible to control the slope of $\sP t f$
in terms of the minimal weak gradient of $f$
\[
\llvert \rmD\sP t f\rrvert ^2\le\rme^{-2K t} \sP
t \bigl(\llvert \rmD f\rrvert _w^2 \bigr)\qquad
\mbox{whenever }f\in D(\cE_\C), \llvert \rmD f\rrvert _w
\in L^\infty(X,\mm),
\]
an estimate that has two useful geometric-analytic consequences:
\begin{longlist}[({b})]
\item[(b)]
${\mathsf d}$ coincides with
the intrinsic distance associated to the Dirichlet form $\cE_\C$
(introduced in \textsc{Biroli--Mosco}
\cite{Biroli-Mosco95},
see also \cite{Sturm95,Sturm98} and \cite{Stollmann10}), namely
\[
{\mathsf d}(x,y)=\sup \bigl\{\psi(y)-\psi(x)\dvtx \psi\in D(
\cE_\C)\cap\rmC_b(X), \Gamma(\psi)\le 1 \bigr\},\qquad
x,y\in X.
\]
\end{longlist}
\begin{longlist}[({c})]
\item[(c)] Every function $\psi\in D(\cE_\C)$ with
$\Gamma(\psi)\le1$ $\mm$-a.e. admits a continuous
(in fact $1$-Lipschitz) representative
$\tilde\psi$.
\end{longlist}

\textit{From $\BE K\infty$ to $\RCD K\infty$.}
In the present paper, we provide necessary and sufficient conditions
for the validity of the
converse implication, that is, $\BE K\infty\Rightarrow\RCD
K\infty$.

In order to state this result in a precise way, one has first to
clarify how the metric structure should be related to the Dirichlet
one. Notice that this problem is much easier from the point of view
of the metric measure setting, since
one has the canonical way \eqref{eq103} to construct the Cheeger energy.

Since we tried to avoid any local compactness assumptions on $X$ as
well as
doubling or Poincar\'e conditions on $\mm$,
we used the previous structural properties
(a), (b), (c) as a {guide to find a} reasonable set of assumptions
for our theory; notice that they are in any case necessary conditions
to get a $\RCD K\infty$ space.

{We thus start from a strongly local and symmetric Dirichlet form $\cE$
on a Polish topological space $(X,\tau)$ endowed with its Borel
$\sigma$-algebra and a finite (for the scope of this introduction)
Borel measure $\mm$.
In the algebra $\cV_{\infty}:=D(\cE)\cap L^\infty(X,\mm)$
we consider the subspace $\cGz$ of functions $f$ admitting a Carr\'e du
champ $\Gamma(f)\in L^1(X,\mm)$: they are characterized by the identity
%
\begin{equation}
\label{eq24} \cE(f,f\varphi)-\frac{1}2 \cE\bigl(f^2,\varphi
\bigr)=\int_X \Gamma(f)\varphi\,\d\mm\qquad \mbox{for every
}\varphi\in\cV_{\infty}.
\end{equation}
We can therefore introduce
the intrinsic distance ${\mathsf d}_\cE$
as in ({b})
%
\begin{eqnarray}\label{eqBMdistance}
&&\qquad {\mathsf d}_\cE(x,y)\nonumber
\\[-8pt]
\\[-8pt]
&&\qquad \qquad :=\sup \bigl\{\psi(y)-\psi(x)\dvtx \psi\in \cGz
\cap\rmC(X), \Gamma(\psi)\le 1 \bigr\}, \qquad
 x,y\in X, \nonumber
\end{eqnarray}
and, following
the standard approach, we will assume that
${\mathsf d}_\cE$ is a complete distance on $X$ and
the topology induced by ${\mathsf d}_\cE$ coincides
with $\tau$.

In this way, we end up
with} \emph{Energy measure spaces} $(X,\tau,\mm,\cE)$
and in this setting
we prove in Theorem~\ref{thmslope-bound}
that $\cE\le\cE_\C$,
where $\cE_\C$ is the Cheeger energy associated to ${\mathsf d}_\cE$;
moreover, Theorem~\ref{thmcE=C} shows that
$\cE=\cE_\C$ if and only if ({a}) holds
{(see \cite{Koskela-Shanmugalingam-Zhou12}, Section~5, for
a similar result in the case of doubling spaces satisfying a local
Poincar\'e condition and for
interesting examples where $\cE_\C$ is not quadratic and
$\cE\neq\cE_\C$).}
{It is also worth mentioning (Theorem~\ref{corlength}) that
for this class of spaces $(X,{\mathsf d}_\cE)$ is always a length
metric space, a result previously known in a locally compact framework
\cite{Sturm98,Stollmann10}.}

{The Bakry--\'Emery condition $\BE K\infty$
can then be stated in a weak integral form
(strongly inspired by \cite{Bakry06,Bakry-Ledoux06,Wang11})
just involving the Markov semigroup $(\sP t)_{t\ge0}$
[see \eqref{eqBEKnu.3} of
Corollary~\ref{leBE-equivalent} and
\eqref{eqGG1.a}, \eqref{eqGG1.adelta} for relevant definitions]
by asking that the differential inequality
%
\begin{equation}
\quad\frac{\partial^2}{\partial s^2} \int_X (\sP{t-s} f)^2 \sP s
\varphi\,\d\mm\ge 2K \frac\partial{\partial s}\int_X (
\sP{t-s} f)^2 \sP s\varphi\,\d \mm,\qquad {0<s<t,}
\hspace*{-20pt}\label{eq23} 
\end{equation}
%
is fulfilled
for {any $f\in L^2(X,\mm)$ and} any nonnegative $\varphi\in
L^2\cap L^\infty(X,\mm)$.
Notice that in the case $K=0$ \eqref{eq23} is equivalent to the
convexity in $(0,t)$ of the map $s\mapsto\int_X (\sP{t-s} f)^2 \sP
s\varphi\,\d\mm$.}

If we also assume that $\BE K\infty$ holds, it
turns out that
({c}) is in fact equivalent to a weak-Feller
condition on the semigroup $(\sP t)_{t\ge0}$, namely $\sP t$ maps
$\Lip_b(X)$
in $\rmC_b(X)$. Moreover, ({c}) implies
the upper-regularity
(\emph{a}) of $\cE$ and the
fact that every $f\in D(\cE)\cap L^\infty(X,\mm)$
admits a Carr\'e du champ $\Gamma$ satisfying \eqref{eq24}.

Independently of $\BE K\infty$, when properties ({a}) and (c)
are satisfied,
we call $(X,\tau,\mm,\cE)$ a
\emph{Riemannian Energy measure space},
since these space seem appropriate nonsmooth versions of
Riemannian manifolds. It is also worth mentioning that in this class of spaces
$\BE K\infty$ is equivalent to an (exponential) contraction property
for the semigroup $(\sH{t})_{t\ge0}$ with respect to the Wasserstein
distance $W_2$ (see Corollary~\ref{corBE=contraction}), in analogy
with \cite{Kuwada10}.

Our main equivalence result, Theorem~\ref{thmmain-identification},
shows that a
$\BE K\infty$ Riemannian Energy measure space
satisfies the $\RCD K\infty$ condition: thus, in view of the converse
implication
proved in \cite{AGS11b}, $\BE K\infty$ is essentially equivalent to
$\RCD K\infty$.
A more precise formulation of our result, in the simplified case when
the measure $\mm$ is
finite, is the following.

%
\begin{theorem}[(Main result)]\label{thmmainintro}
Let $(X,\tau)$ be a Polish space and let $\mm$ be a finite Borel
measure in $X$. Let
$\cE\dvtx L^2(X,\mm)\to[0,\infty]$ be a strongly local, symmetric
Dirichlet form
generating a {mass preserving} Markov semigroup $(\sP t)_{t\ge0}$ in
$L^2(X,\mm)$, let ${\mathsf d}_\cE$ be
the intrinsic distance defined by
\eqref{eqBMdistance}
and assume that:
\begin{longlist}[(ii)]
\item[(i)] ${\mathsf d}_\cE$ {is a complete distance on $X$ inducing} the
topology $\tau$ and any function $f \in \mathbb G_\infty$ with $\Gamma(f)\le 1$ admits a
  continuous representative;

\item[(ii)]
the Bakry--\'Emery {$\BE K\infty$} condition \eqref{eq23} 
is fulfilled by $(\sP t)_{t\ge0}$.
\end{longlist}
Then $(X,{\mathsf d}_\cE,\mm)$ is a $\RCD K\infty$ space.
\end{theorem}

We believe that this equivalence result, between the ``Eulerian''
formalism of the
Bakry--\'Emery $\BE K\infty$ theory and the
``Lagrangian'' formalism of the $\CD K\infty$ theory, is conceptually
important and that it could be a first step for a
better understanding of curvature conditions in metric measure spaces.
Also, this equivalence
is technically useful. Indeed, in the last section of this paper, we
prove the tensorization of $\BE K N$ spaces. Then,
in the case $N=\infty$, we can use the implication from $\BE K\infty$
to $\RCD K\infty$ to read this property in terms of
tensorization of $\RCD K\infty$ spaces: this was previously known (see
\cite{AGS11b}) only under an
a priori nonbranching assumption on the base spaces [notice that the
$\CD K N$ theory, even with $N=\infty$,
suffers at this moment the same limitation].
On the other hand, we use the implication from $\RCD K\infty$ to $\BE
K\infty$, \eqref{eqBEKNPT} below
and the strong stability properties which follow by the EVI$_K$
formulation to provide stability of the $\BE K N$
condition under a very weak convergence, the Sturm--Gromov--Hausdorff
convergence.

\textit{Plan of the paper.}
Section~\ref{secMarkov} collects notation
and preliminary results on Dirichlet forms, Markov semigroups
and functional $\Gamma$-calculus, following the presentation
of \cite{Bouleau-Hirsch91}, which avoids any
topological assumption.
Particular attention is devoted to various formulations
of the $\BE KN$ condition: they are discussed
in Section~\ref{subsecBE},
trying to present
an intrinsic approach that does not rely on
the introduction of a distinguished algebra
of functions $\cA$ and extra assumptions on the Dirichlet
form $\cE$, besides locality. In its weak formulation [see \eqref
{eqBEKnu.3} of
Corollary~\ref{leBE-equivalent} and
\eqref{eqGG1.a}, \eqref{eqGG1.adelta}],
%
\begin{eqnarray}
\label{eqBEKNPT} %
{\frac{1}4}\frac{\partial^2}{\partial s^2} \int
_X (\sP {t-s} f)^2 \sP s\varphi\,\d\mm&\ge& {
\frac{K}2} \frac\partial{\partial s}\int_X (
\sP{t-s} f)^2 \sP s\varphi\,\d\mm
\nonumber
\\[-8pt]
\\[-8pt]
&&{}+ \frac{1}{N} \int_X (\DeltaE\sP{t-s}
f)^2 \sP s\varphi\,\d\mm, %
\nonumber
\end{eqnarray}
which is well suited to study stability issues,
$\BE KN$ does not even need a densely defined
Carr\'e du Champ $\Gamma$, because only the semigroup $(\sP t)_{t\ge0}$
is involved.

Section~\ref{secMeMe} is devoted to
study the interaction between energy and metric structures.
A few metric concepts are recalled in Section~\ref{subsecpreliminaries},
whereas Section~\ref{secdual} shows how to
construct a dual semigroup
{$(\sH{t})_{t\ge0}$} in the space of probability measures
$\prob X$ under suitable Lipschitz estimates on
$(\sP t)_{t\ge0}$. By using refined properties
of the Hopf--Lax semigroup, we also extend
some of the duality results proved by \textsc{Kuwada}
\cite{Kuwada10} to general complete and separable
metric measure spaces, avoiding any doubling or Poincar\'e condition.

Section~\ref{subsecEMMspace} presents
a careful analysis
{of the intrinsic distance ${\mathsf d}_\cE$ \eqref{eqBMdistance}
associated to a Dirichlet form and of
Energy measure structures $(X,\tau,\mm,\cE)$.}
We will thoroughly discuss
the relations between
the Dirichlet form $\cE$ and the Cheeger energy $\C$ induced
by a distance ${\mathsf d}$, possibly different from the intrinsic distance
${\mathsf d}_\cE$ and {we will obtain a precise characterization
of the distinguished case when ${\mathsf d}={\mathsf d}_\cE$ and
$\cE=2\C$:
here, conditions ({a}), (b) play a crucial role.}

A further investigation
when $\BE K\infty$ is also assumed is carried out in
Section~\ref{subsecDir-BE},
leading to the class of \emph{Riemannian Energy measure spaces}.

Section~\ref{secproof} contains
the proof of the main equivalence result, Theorem
{\ref{thmmainintro}}, between
$\BE K\infty$ and $\RCD K\infty$.
Apart the basic estimates of Section~\ref{subsecstep1},
the argument is split into two main steps: Section~\ref{subseclogHarnack}
proves a first L\,logL regularization estimate for
the semigroup {$(\sH{t})_{t\ge0}$}, starting from
arbitrary measures in $\probt X$
(here, we follow the approach of \cite{Wang11}).
Section~\ref{subsecaction} contains
the crucial action estimates to prove
the EVI$_K$ inequality \eqref{eqEVIK1}.
Even if the strategy of the proof has been partly inspired by
the geometric heuristics discussed in \cite{Daneri-Savare08}
(where the Eulerian approach of \cite{Otto-Westdickenberg05} to
contractivity of gradient flows
has been extended to cover also convexity and evolutions in the EVI$_K$ sense)
this part is completely new and it uses
in a subtle way all the refined technical issues
discussed in the previous sections of the paper.

In the last Section~\ref{secapplications}, we discuss the above
mentioned applications of the equivalence
between $\BE K\infty$ and $\RCD K\infty$.

\section{Dirichlet forms, Markov semigroups, \texorpdfstring{$\Gamma$}{$Gamma$}-calculus}\label{secMarkov}

\subsection{Dirichlet forms and \texorpdfstring{$\Gamma$}{$Gamma$}-calculus}
\label{subsecDiFo}

Let $(X,\cB)$ be a measurable space, let $\mm\dvtx \cB\to[0,\infty]$
$\sigma$-additive
and let $L^p(X,\mm)$ be the Lebesgue spaces (for notational
simplicity, we omit the dependence on $\cB$). Possibly
enlarging $\cB$ and extending $\mm$ we assume that $\cB$ is $\mm$-complete.
In the next Sections~\ref{secMeMe} and \ref{secproof}, we will
typically consider the case
when $\cB$ is the $\mm$-completion of the Borel $\sigma$-algebra
generated by a Polish topology $\tau$ on $X$.

In all of this paper, we will assume that
%
\begin{equation}
\label{eq50} %
\begin{tabular}{@{\quad\qquad}p{320pt}@{\hspace*{-5pt}}} $\cE\dvtx
L^2(X,\mm)\to[0,\infty]$ is a strongly local, symmetric Dirichlet
form generating a Markov semigroup $(\sP t)_{t\ge0}$ in
$L^2(X,\mm)$.\end{tabular} %
\end{equation}

Let us briefly recall the precise meaning of this statement.

A \textit{symmetric Dirichlet form} $\cE$
is a $L^2(X,\mm)$-lower semicontinuous quadratic form satisfying the
Markov property
%
\begin{equation}
\label{eqMarkovian} \cE(\eta\circ f)\leq\cE(f) \qquad\mbox{for every normal
contraction $\eta\dvtx \R\to\R$},
\end{equation}
that is, a $1$-Lipschitz map satisfying $\eta(0)=0$.
We refer to \cite{Bouleau-Hirsch91,Fukushima-Oshima-Takeda11}
for equivalent formulations of \eqref{eqMarkovian}.
We also define
\[
\cV:=D(\cE)=\bigl\{f\in L^2(X,\mm)\dvtx \cE(f)<\infty
\bigr\}, \qquad \cV_{\infty}:=D(\cE)\cap L^\infty(X,\mm).
\]
We also assume that $\cV$
is dense in $L^2(X,\mm)$.

We still denote by $\cE(\cdot,\cdot)\dvtx \cV\to\R$ the
associated continuous and symmetric bilinear form
\[
\cE(f,g):=\tfrac{1}4 \bigl(\cE(f+g)-\cE(f-g) \bigr).
\]
We will assume \textit{strong locality of $\cE$}, namely
\[
\forall f,g\in\cV\dvtx \cE(f,g)=0\qquad\mbox{if $(f+a)g=0$ $
\mm$-a.e. in $X$ for some $a\in\R$.}
\]
It is possible to prove (see, e.g., \cite{Bouleau-Hirsch91},
Proposition~2.3.2) that $\cV_{\infty}$
is an algebra with
respect to pointwise
multiplication, so that
for every $f\in\cV_{\infty}$ the linear form on $\cV
_{\infty}$
%
\begin{equation}
\label{eq65bis} \boldsymbol{\Gamma}[f;\varphi]:=\due\cE(f,f\varphi)-\uno\cE
\bigl(f^2,\varphi\bigr),\qquad \varphi\in\cV_{\infty},
\end{equation}
is well defined and for every normal contraction $\eta\dvtx \R\to\R$ it
satisfies
\cite{Bouleau-Hirsch91}, Proposition~2.3.3,
%
\begin{equation}
\label{eq106} 0\le\boldsymbol{\Gamma}[\eta\circ f;\varphi]\le \boldsymbol{
\Gamma}[f;\varphi]\le\llVert \varphi\rrVert _\infty\cE(f)\qquad \mbox{for
every }f,\varphi\in\cV_{\infty}, \varphi\ge0.\hspace*{-27pt}
\end{equation}
Equation \eqref{eq106} shows that for every nonnegative $\varphi\in
\cV_{\infty}$ $f\mapsto\boldsymbol{\Gamma}[f;\varphi]$ is a
quadratic form in $\cV_{\infty}$ which satisfies the Markov property
and can be extended by continuity to
$\cV$.
We call $\cG$ the set of functions $f\in\cV$ such that
the linear form
$\varphi\mapsto\boldsymbol{\Gamma}[f;\varphi]$
can be represented by a an absolutely continuous measure w.r.t. $\mm$
with density $\Gamma(f)\in L^1_+(X,\mm)$:
%
\begin{equation}
\label{defenergymeasure} %
f\in\cG\quad \Leftrightarrow\quad  \boldsymbol{\Gamma}[f;\varphi]=
\int_X \Gamma(f)\varphi\,\d\mm \qquad\mbox{for every }
\varphi\in\cV_{\infty}. %
\end{equation}
{Since $\cE$ is strongly local, \cite{Bouleau-Hirsch91}, Theorem~6.1.1, yields
the representation formula}
%
\begin{equation}
\label{eqenergymeasurebis} \cE(f,f)=\int_X \Gamma(f)\,\d\mm\qquad
\mbox{for every }f\in\cG.
\end{equation}
It is not difficult to check that
$\cG$ is a closed vector subspace of $\cV$,
the restriction of $\cE$ to $\cG$ is still a strongly local Dirichlet
form
admitting {the}
{\emph{Carr\'e du champ}}
$\Gamma$ {defined by \eqref{defenergymeasure}}
(see, e.g., \cite{Bouleau-Hirsch91}, Definition~4.1.2):
$\Gamma$ is a quadratic
continuous map defined in $\cG$ with
values in $L^1_+(X,\mm)$.
We will see in the next Section~\ref{subsecBE} that
if $\cE$ satisfies the $\BE K\infty$ condition, then
$\cG$ coincides with $\cV$
and $\cE$ admits a functional $\Gamma$-calculus
on the whole space $\cV$.

Since we are going to use $\Gamma$-calculus techniques,
we use the $\Gamma$ notation also for the symmetric, bilinear and
continuous map
\[
\Gamma(f,g):=\tfrac{1}4 \bigl(\Gamma(f+g)-\Gamma(f-g) \bigr)
\in L^1(X,\mm ),\qquad f, g\in\cG,
\]
which, thanks to \eqref{eqenergymeasurebis}, represents the bilinear
form $\cE$ by the formula
\[
\cE(f,g)= \mezzo\int_X\Gamma(f,g) \,\d\mm
\qquad\mbox{for every }f,g\in\cG.
\]
Because of Markovianity and locality
$\Gamma(\cdot,\cdot)$ satisfies the chain rule
\cite{Bouleau-Hirsch91}, Corollary~7.1.2,
%
\begin{equation}
\label{eqGchain} \Gamma\bigl(\eta(f),g\bigr)=\eta'(f)\Gamma(f,g)
\qquad \mbox{for every }f, g\in\cG, \eta\in\Lip(\R), \eta(0)=0,\hspace*{-25pt}
\end{equation}
and the Leibniz rule:
\[
\Gamma(fg,h)=f\Gamma(g,h)+g\Gamma(f,h)\qquad \mbox{for every }f, g,
h\in\cGz:=\cG\cap L^\infty(X,\mm).
\]
Notice that by
\cite{Bouleau-Hirsch91}, Theorem~7.1.1,
\eqref{eqGchain} is well defined since for every
{Borel} set $N\subset\R$
(as the set where $\phi$ is not differentiable)
%
\begin{equation}
\label{eq72} \Leb1(N)=0\quad\Rightarrow\quad \Gamma(f)=0\qquad\mbox{$\mm$-a.e.
on }f^{-1}(N).
\end{equation}
Among the most useful consequences of \eqref{eq72} and
\eqref{eqGchain} that we will repeatedly use in the
sequel, we recall that for every $f,g\in\cG$
\[
\Gamma(f-g)=0\qquad\mbox{$\mm$-a.e. on }\{f=g\},
\]
and the following identities hold $\mm$-a.e.:
%
\begin{eqnarray}
\label{eq74} \Gamma(f\land g)&=& %
\cases{\displaystyle \Gamma(f),&
\quad on $\{f\le g\}$,
\cr
\displaystyle \Gamma(g),&\quad on $\{f\ge g\}$, }
\nonumber
\\[-8pt]
\\[-8pt]
\Gamma(f\lor g)&=& %
\cases{\displaystyle \Gamma(f),&\quad on $\{f\ge
g\}$,
\cr
\displaystyle \Gamma(g),&\quad on $\{f\le g\}$. } %
\nonumber
\end{eqnarray}
We conclude this section by stating the following lower semicontinuity
result along a sequence $(f_n)_n\subset\cG$ converging to $f\in\cG$:
%
\begin{eqnarray}
\label{eq20} 
&&f_n\weakto f,\qquad  \sqrt{
\Gamma(f_n)}\weakto G \qquad \mbox{in }L^2(X,\mm)\nonumber
\\[-8pt]
\\[-8pt]
&&\qquad \Rightarrow\quad \Gamma(f)\le G^2\qquad  \mbox{$\mm$-a.e. in $X$}.
\nonumber
\end{eqnarray}
It can be easily proved by using Mazur's lemma and the $\mm$-a.e.
convexity of
$f\mapsto\sqrt{\Gamma(f)}$,
namely
\[
\sqrt{\Gamma\bigl((1-t)f+tg\bigr)}\leq(1-t)\sqrt{\Gamma(f)}+t\sqrt{\Gamma (g)}
\qquad \mbox{$\mm$-a.e. in $X$, for all $t\in[0,1]$,}
\]
which follows since $\Gamma$ is quadratic and nonnegative.

\textit{The Markov semigroup and its generator.}
The Dirichlet form $\cE$ induces a densely defined self-adjoint operator
$\DeltaE\dvtx D(\DeltaE)\subset\cV\to L^2(X,\mm)$
defined by the integration by parts formula
$\cE(f,g)=-\int_X g \DeltaE f\,\d\mm$ for all $g\in\cV$.

{When $\cG=\cV$,} the operator $\DeltaE$ is of ``diffusion'' type,
since it satisfies the following chain rule
for every $\eta\in\rmC^2(\R)$ with $\eta(0)=0$ and bounded first and
second derivatives
{(see \cite{Bouleau-Hirsch91}, Corollary~6.1.4, and the next
\eqref{eqgoodextension})}: if $f\in D(\DeltaE)$
{with $\Gamma(f)\in L^2(X,\mm)$} then $\eta(f)\in D(\DeltaE)$ with
%
\begin{equation}
\label{eqGchain-laplace} \DeltaE\eta(f)=\eta'(f)\DeltaE f+
\eta''(f)\Gamma(f).
\end{equation}
\textit{The heat flow $\sP t$ associated to $\cE$} is well defined starting
from any initial condition $f\in L^2(X,\mm)$. Recall that in this framework
the heat flow $(\sP t)_{t\ge0}$ is an analytic Markov semigroup
and $f_t=\sP t f$ can be characterized as the unique $C^1$ map
$f\dvtx (0,\infty)\to L^2(X,\mm)$, with values in $D(\DeltaE)$, satisfying
\[
\cases{\displaystyle \frac{\d}{\d t} f_t=
\DeltaE f_t,&\quad for $t\in (0,\infty)$,
\cr
\displaystyle \lim
_{t\downarrow0}f_t=f,&\quad in $L^2(X,\mm)$. }
\]
{Because of this, $\DeltaE$ can equivalently be characterized in terms of
the strong convergence $(\sP t f-f)/t\to\DeltaE f$ in $L^2(X,\mm)$ as
$t\downarrow0$. }

One useful consequence of the Markov property is the $L^p$ contraction
of $(\sP t)_{t\ge0}$ from $L^p\cap L^2$ to $L^p\cap L^2$.
Because of the density of $L^p\cap L^2$ in $L^p$\vadjust{\goodbreak} when $p\in[1,\infty
)$, this allows to
extend uniquely $\sP t$ to
a strongly continuous semigroup of linear contractions in
$L^p(X,\mm)$, $p\in[1,\infty)$, for
which we retain the same notation.
Furthermore, $(\sP t)_{t\ge0}$ is sub-Markovian (cf. \cite
{Bouleau-Hirsch91}, Proposition~3.2.1), since it preserves
one-sided essential bounds, namely $f\leq C$ (resp., $f\geq C$)
$\mm$-a.e. in $X$ for some $C\geq0$ (resp., $C\leq0$) implies $\sP t
f\leq C$ (resp., $\sP tf\geq C$) $\mm$-a.e. in $X$ for all
$t\geq0$.

We will mainly be concerned with the mass-preserving case,
that is,
%
\begin{equation}
\label{eq4} \int_X \sP t f\,\d\mm=\int
_X f\,\d\mm\qquad\mbox{for every }f\in L^1(X,
\mm),
\end{equation}
a property which is equivalent to $1\in D(\cE)$
when
$\mm(X)<\infty$. In the next session
(see Theorem~\ref{thmcE=C}), we will discuss a metric
framework, which will imply \eqref{eq4}.

The semigroup
$(\sP t)_{t\ge0}$ can also be extended by duality
to a weakly$^*$-continuous semigroup of
contractions in $L^\infty(X,\mm)$, so that
\[
\int_X \sP t f \varphi\,\d\mm=\int
_X f \sP t\varphi\,\d\mm \qquad \mbox{for every }f\in
L^\infty(X,\mm), \varphi\in L^1(X,\mm).
\]
It is easy to show that if $f_n\in
L^\infty(X,\mm)$
weakly$^*$
converge to $f$ in $L^\infty(X,\mm)$
then $\sP t f_n\weaksto\sP t f$ in $L^\infty(X,\mm)$.

\textit{The generator of the semigroup in $L^1(X,\mm)$.}
Sometimes it will also be useful to consider the generator
${\DeltaE^{(1)}}\dvtx D({\DeltaE^{(1)}})\subset L^1(X,\mm)\to L^1(X,\mm
)$ of $(\sP
t)_{t\ge0}$ in
$L^1(X,\mm)$ (\cite{Pazy83}, Section~1.1):
%
\begin{eqnarray}
\label{eq70} &&f\in D\bigl({\DeltaE^{(1)}}\bigr),\qquad  {
\DeltaE^{(1)}}f=g\nonumber
\\[-8pt]
\\[-8pt]
&&\qquad \Leftrightarrow \quad \lim_{t\downarrow0}
\frac{1}t (\sP t f-f )=g
 \qquad \mbox{strongly in }L^1(X,\mm). \nonumber
\end{eqnarray}
Thanks to \eqref{eq70} {and $L^1$ contractivity} it is easy to check that
%
\begin{eqnarray}
\label{eq7} &&\quad  f\in D\bigl({\DeltaE^{(1)}}\bigr)
\nonumber
\\[-8pt]
\\[-8pt]&&\quad  \qquad \Rightarrow\quad
\sP t f\in D\bigl({\DeltaE^{(1)}}\bigr),\qquad {\DeltaE^{(1)}}
\sP t f=\sP t{\DeltaE^{(1)}}f
\qquad \mbox{for all $t\geq0$},\nonumber
\end{eqnarray}
and, when \eqref{eq4} holds,
\[
\int_X {\DeltaE^{(1)}}f\,\d\mm=0
\qquad\mbox{for every }f\in D\bigl({\DeltaE^{(1)}}\bigr).
\]
The operator ${\DeltaE^{(1)}}$ is $m$-accretive and coincides with
the smallest closed extension of $\DeltaE$ to $L^1(X,\mm)$:
(\cite{Bouleau-Hirsch91}, Proposition~2.4.2):
%
\begin{eqnarray}
\label{eq2} &&g={\DeltaE^{(1)}}f\nonumber
\\[-4pt]
\\[-12pt]
&&\qquad\Leftrightarrow\quad \cases{
\displaystyle\exists f_n\in D(\DeltaE)\cap
L^1(X,\mm) \mbox{ with $g_n=\DeltaE f_n\in
L^1(X,\mm)\dvtx $}
\cr
   \mbox{$f_n\to f, g_n
\to g$ strongly in $L^1(X,\mm)$.}}
\nonumber
\end{eqnarray}
Whenever $f\in D({\DeltaE^{(1)}})\cap L^2(X,\mm)$ and ${\DeltaE
^{(1)}}f\in
L^2(X,\mm)$,
one can recover $f\in D(\DeltaE)$ by
\eqref{eq7}, the integral formula $\sP t f-f=\int_0^t \sP r{\DeltaE^{(1)}}
f\,\d r$ and
the contraction property of $(\sP t)_{t\ge0}$ in every $L^p(X,\mm)$,
thus obtaining
%
\begin{eqnarray}
\label{eqgoodextension} &&f\in D\bigl({\DeltaE^{(1)}}\bigr)\cap
L^2(X,\mm),\qquad  {\DeltaE^{(1)}}f\in L^2(X,\mm)
\nonumber
\\[-8pt]
\\[-8pt]
&&\qquad\Rightarrow\quad f\in D(\DeltaE), \qquad \DeltaE f={\DeltaE^{(1)}}f.
\nonumber
\end{eqnarray}

\textit{Semigroup mollification.}
A useful tool to prove the above formula is given by the
mollified semigroup: we fix a
%
\begin{equation}
\label{eq65} \mbox{nonnegative kernel $\kappa\in\rmC^\infty_c(0,
\infty)$ with $\int_0^\infty\kappa(r)\,\d r=1$, }
\end{equation}
and for every $f\in L^p(X,\mm)$, $p\in[1,\infty]$, we set
%
\begin{equation}
\label{eqGG4} \frh^{\eps}f:= \frac{1}\eps\int
_0^\infty\sP r f \kappa(r/\eps)\,\d r,\qquad \eps>0,
\end{equation}
where the integral should be intended in the Bochner sense whenever
$p<\infty$ and
by taking the duality with arbitrary $\varphi\in L^1(X,\mm)$ when
$p=\infty$.

Since $\DeltaE$ is the generator of $(\sP t)_{t\ge0}$ in $L^2(X,\mm)$,
it is not difficult to check (\cite{Pazy83}, Proof of Theorem~2.7),
that if $f\in L^2\cap L^p(X,\mm)$ for some $p\in[1,\infty]$ then
\[
{-} \DeltaE\bigl(\frh^{\eps} f\bigr) =\frac{1}{\eps^2}
\int_0^\infty\sP r f \kappa'(r/\eps)\,
\d r \in L^2\cap L^p(X,\mm).
\]
Since ${\DeltaE^{(1)}}$ is the generator of $(\sP t)_{t\ge0}$
in $L^1(X,\mm)$,
the same property holds for ${\DeltaE^{(1)}}$ if $f\in L^1(X,\mm)$:
%
\begin{equation}
\label{eqbanff1} {-} {\DeltaE^{(1)}}\bigl( \frh^{\eps}f\bigr) =
\frac{1}{\eps^2} \int_0^\infty\sP r f
\kappa'(r/\eps)\,\d r \in L^1(X,\mm).
\end{equation}
%

\subsection{On the functional Bakry--\'Emery condition}
\label{subsecBE}

We will collect in this section various equivalent
characterizations of the Bakry--\'Emery condition $\BE KN$
given in \eqref{eq99} for the $\Gamma_2$
operator \eqref{eq97}.
We have been strongly inspired by
\cite{Bakry06,Bakry-Ledoux06,Wang11}:
even if the essential
estimates are well known, here we will take a particular care in
establishing all the results in a weak form,
under the minimal regularity assumptions
on the functions involved.
We consider here the case of finite dimension as well, despite the fact
that the next Sections~\ref{secMeMe} and \ref{secproof}
will be essentially confined
to the case $N=\infty$.
Applications of $\BE KN$ with $N<\infty$
will be considered in the last Section~\ref{secapplications}.

Let us denote by $ \boldsymbol{\Gamma}\dvtx (\cV_{\infty})^3\to\R$
the multilinear map
\[
\boldsymbol{\Gamma}[f,g;\varphi]:=\tfrac12 \bigl(\cE(f,g\varphi)+\cE
(g,f\varphi)-\cE(fg,\varphi) \bigr), \qquad \boldsymbol{\Gamma}[f;\varphi]=
\boldsymbol{\Gamma}[f,f;\varphi].
\]
Recalling \eqref{eq106}, one can easily prove the {uniform}
continuity property
%
\begin{eqnarray}
\label{eq110} &&f_n,\varphi_n\in \mathbb V_\infty, \qquad f_n\to f,\qquad
\varphi_n\to\varphi\qquad \mbox{in }\cV,\qquad  \sup_n
\llVert \varphi_n\rrVert _\infty<\infty
\nonumber
\\[-8pt]
\\[-8pt]
&&\qquad \Rightarrow\quad \exists\lim_{n\to\infty} \boldsymbol{
\Gamma}[f_n;\varphi_n]\in\R, 
\nonumber
\end{eqnarray}
which allows to extend $ \boldsymbol{\Gamma}$ to a real multilinear map
defined in $\cV\times\cV\times\cV_{\infty}$, for which
we retain the same notation.
The extension $ \boldsymbol{\Gamma}$ satisfies
%
\begin{equation}
\label{eq108} \boldsymbol{\Gamma}[f,g;\varphi]= \int_X
\Gamma(f,g)\varphi\,\d\mm\qquad\mbox{if }f,g\in\cG.
\end{equation}
We also set
%
\begin{equation}
\label{eqGG1-Gamma2} \boldsymbol{\Gamma}_2[f;\varphi]:= \tfrac{1}2
\boldsymbol{\Gamma}[f;\DeltaE\varphi]- \boldsymbol{\Gamma}[f,\DeltaE f;\varphi],
\qquad(f,\varphi)\in D( \boldsymbol{\Gamma}_2),
\end{equation}
where
\[
D( \boldsymbol{\Gamma}_2):= \bigl\{(f,\varphi)\in D(
\DeltaE)\times D(\DeltaE)\dvtx \DeltaE f\in\cV, \varphi, \DeltaE\varphi\in
L^\infty(X,\mm) \bigr\}.
\]
As for \eqref{eq108}, we have
\begin{eqnarray}
\boldsymbol{\Gamma}_2[f;\varphi]= \int
_X \biggl(\frac{1}2\Gamma(f) \DeltaE\varphi-
\Gamma(f,\DeltaE f)\varphi \biggr)\,\d\mm
\nonumber
\\
\eqntext{\mbox{if }(f,\varphi)\in D( \boldsymbol{\Gamma}_2), f,
\DeltaE f\in\cG.}
\end{eqnarray}
Since $(\sP t)_{t\ge0}$ is an analytic semigroup in $L^2(X,\mm)$,
for a given $f\in L^2(X,\mm)$ and $\varphi\in L^2\cap L^\infty(X,\mm)$,
we can consider the functions
%
\begin{eqnarray}
\label{eqGG1.a}
\sfA_{t}[f;\varphi](s) &:=&\frac{1}2\int
_X ( \sP{t-s}f )^2 \sP s \varphi\, \d\mm,
\qquad t>0, s\in[0,t],
\\
\label{eqGG1.adelta}
\sfA^{\Delta}_{t}[f;\varphi](s) &:=&
\frac{1}2\int_X (\DeltaE\sP {t-s}f )^2
\sP s \varphi\,\d\mm, \qquad t>0, s\in[0,t),\\
\sfB_{t}[f;\varphi](s)&:=& \boldsymbol{\Gamma}[
\sP{t-s}f; \sP s\varphi] 
,\qquad t>0, s\in(0,t),
\nonumber
\end{eqnarray}
and, whenever $\DeltaE\varphi\in L^2\cap L^\infty(X,\mm)$,
\begin{eqnarray*}
\sfC_{t}[f;\varphi](s) := \boldsymbol{\Gamma}_2[
\sP{t-s}f;\sP s \varphi], \qquad t>0, s\in[0,t).
\end{eqnarray*}
Notice that whenever $\DeltaE f\in L^2(X,\mm)$
\begin{eqnarray*}
\sfA^{\Delta}_{t}[f;\varphi](s)&=&
\sfA_{t}[\DeltaE f;\varphi](s) %
,\qquad t>0, s\in[0,t).
\end{eqnarray*}

%
%
\begin{lemma}
\label{leB-regularity}
For every $f\in L^2(X,\mm),\varphi\in L^2\cap L^\infty(X,\mm)$
and every $t>0$,
we have:
\begin{longlist}[(iii)]
\item[(i)]
the function $s\mapsto\sfA_{t}[f;\varphi](s)$ belongs to $\rmC
^0([0,t])\cap
\rmC^1((0,t))$;
\item[(ii)] the function $s\mapsto\sfA^{\Delta}_{t}[f;\varphi](s)$
belongs to $\rmC^0([0,t))$;
\item[(iii)] the function $s\mapsto\sfB_{t}[f;\varphi](s)$ belongs to
$\rmC^0((0,t))$ and
%
\begin{equation}
\label{eqGG2} \frac\partial{\partial s}\sfA_{t}[f;\varphi](s)=
\sfB_{t}[f;\varphi ](s)\qquad\mbox {for every }s\in(0,t).
\end{equation}
Equation \eqref{eqGG2} and the regularity of $\sfA$ and $\sfB$
extend to $s=t$ if
$f\in\cV$ and to $s=0$ if $\varphi\in\cV_\infty$.
\item[(iv)] If $\varphi$ is nonnegative, $s\mapsto\sfA_{t}[f;\varphi](s)$
and $s\mapsto\sfA^{\Delta}_{t}[f;\varphi](s)$ are nondecreasing.
\item[(v)]
If $\DeltaE\varphi\in L^2\cap L^\infty(X,\mm)$ then
$\sfC$ belongs to $\rmC^0([0,t))$, $\sfB$ belongs to $\rmC
^1([0,t))$, and
%
\begin{equation}
\label{eqGG3} \frac\partial{\partial s}\sfB_{t}[f;\varphi](s)= 2
\sfC_{t}[f;\varphi](s)\qquad\mbox{for every }s\in[0,t).
\end{equation}
In particular, $\sfA\in\rmC^2([0,t))$.
\end{longlist}
\end{lemma}

\begin{pf}
The continuity
of $\sfA$ is easy to check, since $s\mapsto( \sP{t-s}f)^2$ is
strongly continuous
with values in $L^1(X,\mm)$ and $s\mapsto\sP s \varphi$ is
weakly$^*$ continuous in $L^\infty(X,\mm)$.
Analogously, the continuity of $\sfB$ follows from the fact that
$s\mapsto\sP{t-s}f$ is a continuous curve in $\cV$ whenever
$s\in[0,t)$ thanks to the regularizing effect of the heat
flow and
\eqref{eq110}.
The continuity of $\sfC$ follows by a similar argument,
recalling the definition \eqref{eqGG1-Gamma2} and the fact that the curves
$s\mapsto\DeltaE\sP{t-s} f$
and $s\mapsto\DeltaE\sP s\varphi$
are continuous with values in
$\cV$
in the interval
$[0,t)$.

In order to prove \eqref{eqGG2} and \eqref{eqGG3},
let us first assume that $\varphi\in D(\DeltaE)$ with $\DeltaE
\varphi\in L^\infty(X,\mm)$
and $f\in L^2\cap L^\infty(X,\mm)$. Since
\begin{eqnarray*}
\lim_{h\to0}\frac{ \sP{t-(s+h)}f- \sP{t-s}f}h&=& -\DeltaE\sP{t-s}f\qquad
\mbox{strongly in $ \cV$ for $s\in[0,t)$,}
\\
\lim_{h\to0}\frac{ \sP{s+h}\varphi- \sP{s}
\varphi}h&=& \DeltaE\sP{s}\varphi\qquad
\mbox{weakly$^*$ in $ L^\infty(X,\mm)$ for $s\in[0,t)$,}
\end{eqnarray*}
we easily get
\begin{eqnarray*}
\frac\partial{\partial s}\sfA_{t}[f;\varphi](s)&=& \int
_X \biggl(- \sP{t-s}f \DeltaE\sP{t-s}f \sP s \varphi+
\frac{1}2 ( \sP{t-s} f )^2\DeltaE\sP s\varphi \biggr)\,\d\mm
\\
&=&\cE( \sP{t-s}f, \sP{t-s} f \sP s\varphi)- \frac{1}2 \cE\bigl((
\sP{t-s}f)^2, \sP s\varphi\bigr) =\sfB_{t}[f;\varphi](s),
\end{eqnarray*}
by the very definition \eqref{eq65bis} of $ \boldsymbol{\Gamma}$,
since $ \sP{t-s}f$ is
essentially bounded and therefore $( \sP{t-s} f)^2\in\cV_\infty$.
A similar computation yields \eqref{eqGG3}.

In order to extend the validity of \eqref{eqGG2} and
\eqref{eqGG3} to general $f\in L^2(X,\mm)$, we
approximate $f$ by truncation, setting $f_n:=-n\lor f\land n$, $n\in\N$,
and we pass to the limit
in the integrated form
\[
\sfA_{t}[f_n;\varphi](s_2)-
\sfA_{t}[f_n;\varphi](s_1)= \int
_{s_1}^{s_2}\sfB_{t}[f_n;
\varphi](s)\,\d s\qquad \mbox{for every }0\le s_1<s_2<t,
\]
observing that $ \sP{t-s}f_n$ converge strongly to $ \sP{t-s}f$
in $\cV$ as $n\to\infty$ for every $s\in[0,t)$,
so that \eqref{eq110} yields the pointwise convergence
of the integrands in the previous identity.
A similar argument holds for \eqref{eqGG3}, since
$\DeltaE\sP t f_n$ converges strongly to $\DeltaE\sP t f$ in $\cV$.

Eventually, we extend \eqref{eqGG2} to arbitrary $\varphi\in L^2\cap
L^\infty(X,\mm)$ by approximating $\varphi$ with
$\frh^\eps\varphi$ given by \eqref{eqGG4}, \eqref{eq65}.
It is not difficult to check that
$ \sP s (\frh^\eps\varphi) \to\sP s\varphi$
in $\cV$ as $\eps\down0$ with uniform $L^\infty$ bound if
$s>0$ (and also when $s=0$ if $\varphi\in\cV_\infty$).
\end{pf}

%
%
\begin{lemma}
Let us consider functions ${\mathsf a}\in\rmC^1([0,t))$, $\sfg\in
\rmC^0([0,t))$
and a parameter $\nu\ge0$.
The following properties are equivalent:
\begin{longlist}[(iii)]
\item[(i)]${\mathsf a},\sfg$ satisfy the differential inequality
\[
{\mathsf a}''\ge2K {\mathsf
a}' +\nu \sfg \qquad\mbox{in }\mathscr D'(0,t),
\]
and pointwise in $[0,t)$, whenever ${\mathsf a}\in\rmC^2([0,t))$.
\item[(ii)]${\mathsf a}',\sfg$ satisfy the differential inequality
\[
\frac\d{\d s} \bigl(\rme^{-2K s}{\mathsf a}'(s)
\bigr)\ge\nu\rme^{-2K
s}\sfg(s) \qquad\mbox{in }\mathscr
D'(0,t).
\]
\item[(iii)] For every $0\le s_1<s_2<t$
and every test function $\zeta\in\rmC^2([s_1,s_2])$, we have
%
\begin{equation}
\label{eqGG7} \int_{s_1}^{s_2} {\mathsf a}\bigl(
\zeta''+2K\zeta'\bigr)\,\d s+ \bigl[{
\mathsf a}' \zeta \bigr]_{s_1}^{s_2}- \bigl[{
\mathsf a} \bigl(\zeta'+2K\zeta\bigr) \bigr]_{s_1}^{s_2}
\ge \nu\int_{s_1}^{s_2} \sfg\zeta\,\d s.
\end{equation}
\item[(iv)] For every $0\le s_1<s_2$, we have
%
\begin{equation}
\label{eqGG6bis} \rme^{-2K(s_2-s_1)}{\mathsf a}'(s_2)
\ge{\mathsf a}'(s_1)+\nu\int_{s_1}^{s_2}
\rme^{-2K(s-s_1)}\sfg(s)\,\d s.
\end{equation}
\end{longlist}
\end{lemma}

The \emph{proof} is straightforward; we only notice that \eqref{eqGG7}
holds also for $s_1=0$ since ${\mathsf a}\in\rmC^1([0,t))$.

The inequality
\eqref{eqGG7} has two useful consequences that
we make explicit in terms of the functions $\rmI_K$ and $\rmI_{K,2}$
defined by
%
\begin{eqnarray}
\label{eq57} \rmI_{K}(t)&=&\int_0^t
\rme^{Ks}\,\d s =\frac{\rme^{K t}-1}{K},
\nonumber
\\[-8pt]
\\[-8pt]
\rmI_{K,2}(t)&=&\int_0^t
\rmI_{K}(s)\,\d s= \frac{\rme^{Kt}-Kt-1}{K^2},
\nonumber
\end{eqnarray}
with the obvious definition for $K=0$: $I_0(t)=t, I_{0,2}(t)=t^2/2$.

Choosing $s_1=0, s_2=\tau$
and
\[
\zeta(s):=\rmI_{2K}(\tau-s)=\frac{\rme^{2K(\tau-s)}-1}{2K} \qquad
\mbox{so that } \zeta'+2K\zeta=-1, \zeta(\tau)=0,
\]
we obtain
%
\begin{eqnarray}
\label{eqfromzeta1} \rmI_{2K}(\tau) 
{\mathsf
a}'(0)+
\nu\int_0^\tau
\rmI_{2K}(\tau-s) 
\sfg(s)\,\d s\le{\mathsf a}(\tau)-{\mathsf
a}(0)
\nonumber
\\[-8pt]
\\[-8pt]
\eqntext{\mbox{for every }\tau\in[0,t).}
\end{eqnarray}
Choosing
\[
\zeta(s):=\rmI_{-2K}(s)=\frac{1-\rme^{-2Ks}}{2K} \qquad \mbox{so
that } \zeta'+2K\zeta=1, \zeta(0)=0,
\]
we obtain
%
\begin{eqnarray}
\label{eqfromzeta2} {\mathsf a}(\tau)-{\mathsf a}(0)+ \nu\int_0^\tau
\rmI_{-2K}(s) \sfg(s)\,\d s\le{\mathsf a}'(\tau)
\rmI_{-2K}(\tau)
\nonumber
\\[-8pt]
\\[-8pt]
\eqntext{\mbox{for every }\tau\in[0,t).}
\end{eqnarray}
%

%
\begin{corollary}
\label{leBE-equivalent}
Let
$\cE$ be a Dirichlet form in $L^2(X,\mm)$ as in
\eqref{eq50}, and let $K\in\R$ and $\nu\ge0$.
The following conditions are equivalent:
\begin{longlist}[(iii)]
\item[(i)]
For every $(f,\varphi)\in D( \boldsymbol{\Gamma}_2)$,
with $\varphi\ge0$,
we have
\[
\boldsymbol{\Gamma}_2[f;\varphi]\ge K \boldsymbol{
\Gamma}[f;\varphi]+ \nu\int_X (\DeltaE f)^2
\varphi\,\d\mm.
\]
\item[(ii)] For every $f\in L^2(X,\mm)$
and every nonnegative $\varphi\in D(\DeltaE)\cap L^\infty(X,\mm)$
with $\DeltaE\varphi\in L^\infty(X,\mm)$, we have
%
\begin{equation}
\label{eqBEKnu.2} \sfC_{t}[f;\varphi](s)\ge K\sfB_{t}[f;
\varphi](s)+2\nu \sfA ^{\Delta}_{t}[f;\varphi](s) \qquad\mbox{for
every }0\le s<t.\hspace*{-25pt}
\end{equation}
\item[(iii)] For every $f\in L^2(X,\mm)$, every nonnegative
$\varphi\in L^2\cap L^\infty(X,\mm)$
and $t>0$
%
\begin{equation}
\label{eqBEKnu.3} \frac{\partial^2}{\partial s^2} \sfA_{t}[f;\varphi](s)\ge 2K \frac
\partial{\partial s}\sfA_{t}[f;\varphi](s) + 4\nu\sfA
^{\Delta}_{t}[f;\varphi](s) 
\end{equation}
in the sense of distribution of $\mathscr D'(0,t)$
(or, equivalently, the inequality \eqref{eqBEKnu.3}
holds pointwise in $[0,t)$ for every nonnegative $\varphi\in L^2\cap
L^\infty(X,\mm)$ with
$\DeltaE\varphi\in L^2\cap L^\infty(X,\mm)$).
\item[(iv)] For every $f\in L^2(X,\mm)$ and $t>0$, we have
$\sP t f\in\cG$ and
%
\begin{eqnarray}
\label{eqBEKnu.4} \rmI_{2K}(t)\Gamma(\sP t f)+ 2\nu \mathrm
I_{2K,2}(t) (\DeltaE\sP t f )^2\le \tfrac12\sP t
\bigl(f^2 \bigr) -\tfrac12(\sP t f)^2 \nonumber
\\[-8pt]
\\[-8pt]
\eqntext{\mbox{$
\mm$-a.e. in $X$}.}
\end{eqnarray}
\item[(v)]$\cG=\cV$ and for every $f\in\cV$
\[
\tfrac12\sP t \bigl(f^2 \bigr)-\tfrac12(\sP t
f)^2+ 2\nu \mathrm I_{-2K,2}(t) (\DeltaE\sP t f
)^2\le \rmI_{-2K,2}(t) \sP t \Gamma(f) \qquad\mbox{$\mm$-a.e.
in $X$.}
\]
\item[(vi)]$\cG$ is dense in $L^2(X,\mm)$ and
for every $f\in\cG$ and $t>0$ $\sP t f$ belongs to $\cG$ with
%
\begin{equation}
\label{eqBEKnu.6} \qquad\Gamma(\sP t f)+2\nu\rmI_{-2K}(t) (\DeltaE\sP t f
)^2 \le\rme^{-2K t}\sP t \Gamma(f) \qquad\mbox{$\mm$-a.e. in
$X$.}
\end{equation}
\end{longlist}
If one of these equivalent properties holds, then $\cG=\cV$ (i.e., $\cE
$ admits the Carr\'e du Champ $\Gamma$ in $\cV$).
\end{corollary}

\begin{pf}
The implication (i)~$\Rightarrow$ (ii) is obvious, {choosing $s=0$}.
The converse implication is also true under the regularity assumption of
(i): it is sufficient to pass to the limit in
\eqref{eqBEKnu.2} as $s\uparrow t$ and then as $t\downarrow0$.

(ii)~$\Rightarrow$ (iii) follows by \eqref{eqGG3}
when $\DeltaE\varphi\in L^2\cap L^\infty(X,\mm)$;
the general case follows by approximation
by the same argument we used in the proof of Lemma~\ref{leB-regularity}.

(iii)~$\Rightarrow$ (iv):
by applying \eqref{eqfromzeta1} (with obvious notation) we
get
\[
\rmI_{2K}(t) \boldsymbol{\Gamma}[\sP t f;\varphi]+ 2\nu \mathrm
I_{2K,2}(t)\int_X (\DeltaE\sP t f )^2
\varphi\,\d\mm \le \frac{1}2\int_X \bigl(\sP t
\bigl(f^2 \bigr) -(\sP t f)^2 \bigr)\varphi\,\d\mm
\]
for every nonnegative $\varphi\in\cV_{\infty}$.
Thus, setting $h:=\sP t(f^2)-(\sP t f)^2\in L^1_+(X,\mm)$,
the linear functional $\ell$ on $\cV_{\infty}$ defined by
$\ell(\varphi):={\rmI_{2K}(t)} \boldsymbol{\Gamma}[\sP t f;\varphi]$
satisfies
%
\begin{equation}
\label{eq19} 0\le\ell(\varphi)\le\int_X h \varphi\,\d
\mm\qquad \mbox{for every }\varphi\in\cV_{\infty}, \varphi\ge0.
\end{equation}
Since $\cV_{\infty}$ is a lattice of functions {generating
$\cB$ [because
$\cB$ is complete and
$\cV_{\infty}$ is dense in $L^2(X,\mm)$]} satisfying the Stone
property
$\varphi\in\cV_{\infty} \Rightarrow\varphi\land1\in
\cV_{\infty}$
and clearly \eqref{eq19} yields $\ell(\varphi_n)\to0$
whenever $(\varphi_n)_{n\ge0}\subset\cV_{\infty}$ is a sequence
of functions pointwise decreasing to $0$,
Daniell construction (\cite{Bogachev07}, Theorem~7.8.7),
and the Radon--Nikodym theorem yields
$ \boldsymbol{\Gamma}[\sP t f;\varphi]=\int_X g\varphi\,\d\mm$
for some $g\in L^1_+(X,\mm)$, so that
$\sP tf\in\cG$ and \eqref{eqBEKnu.4} holds.

This argument also shows that
$\cG$ is invariant
under the action of $(\sP t)_{t\ge0}$ and
dense in $L^2(X,\mm)$.
A standard approximation
argument yields the density in $\cV$ (see, e.g., \cite{AGS11b}, Lemma~4.9) and, therefore, $\cG=\cV$
(since $\cG$ is closed in $\cV$; see also
\cite{Bouleau-Hirsch91}, Proposition~4.1.3).

Analogously, (iii)~$\Rightarrow$ (v) follows by \eqref{eqfromzeta2}, while
(iii)~$\Rightarrow$ (vi) follows by \eqref{eqGG6bis}.

Let us now show that (vi)~$\Rightarrow$ (iii).
Since $\cG$ is dense in $L^2(X,\mm)$ and invariant with
respect to $(\sP{t})_{t\ge0}$, we already observed that $\cG=\cV$.
Let us now write \eqref{eqBEKnu.6} with $h>0$ instead
of $t$
and with $f:= \sP{t-s}v$ for some $0<h< s<t$.
Multiplying by $ \sP{s-h} \varphi$ and integrating with
respect to $\mm$,
we obtain
\[
\sfB_{t}[v;\varphi](s-h)+4\nu \rmI_{-2K}(h)
\sfA^{\Delta}_{t}[v;\varphi](s-h)\le\rme^{-2Kh}{\sfB
_{t}[v;\varphi](s)}.
\]
It is not restrictive to assume
$\DeltaE\varphi\in L^2\cap L^\infty(X,\mm)$, so
that $\sfB$ is of class $\rmC^1$ in $(0,t)$. We
subtract $\sfB_{t}[v;\varphi](s)$ from both sides of the inequality,
we divide by $h>0$ and let $h\down0$ obtaining
\[
\frac\partial{\partial s}\sfB_{t}[v;\varphi](s)-2K\sfB
_{t}[v;\varphi](s)\ge4\nu \sfA^{\Delta}_{t}[v;
\varphi](s),
\]
that is, \eqref{eqBEKnu.3}.

To show that (iv)~$\Rightarrow$ (iii), we first write
\eqref{eqBEKnu.4} at $t=h>0$ in the form
\[
\rmI_{2K,2}(h) \bigl(K \Gamma(\sP h f)+2\nu (\DeltaE\sP h f
)^2 \bigr) \le\tfrac12 \sP h\bigl(f^2\bigr)-\tfrac12 (
\sP h f )^2- h \Gamma(\sP h f),
\]
obtaining by subtracting $h \Gamma(\sP h f)$ from both
sides of the inequality. Then we choose $f= \sP{t-s-h}v$
and we multiply the inequality by $ \sP s\varphi$, with
$\varphi\in L^2\cap L^\infty(X,\mm)$ nonnegative
and $\DeltaE\varphi\in L^2\cap L^\infty(X,\mm)$.
We obtain
\begin{eqnarray*}
&&\rmI_{2K,2}(h) \bigl(2K \sfB_{t}[v;\varphi](s)+ 4\nu
\sfA^{\Delta}_{t}[v;\varphi](s) \bigr)
\\
&&\qquad\le\sfA_{t}[v;\varphi](s+h)- \sfA_{t}[v;
\varphi](s)-h\sfB_{t}[v;\varphi](s).
\end{eqnarray*}
Since $\sfA$ is of class $\rmC^2$ and $\sfA'=\sfB$, dividing by $h^2>0$
and passing to the limit as $h\down0$ a simple Taylor expansion
yields
\[
\frac{1}2\frac{\partial^2}{\partial s^2}\sfA_{t}[v;\varphi](s) \ge
\frac{1}2 \bigl(2K \sfB_{t}[v;\varphi](s)+ 4\nu
\sfA^{\Delta}_{t}[v;\varphi](s) \bigr).
\]
A similar argument shows the last implication (v)~$\Rightarrow$ (iii).
\end{pf}

%
\begin{definition}[{[The condition $\BE KN$]}]\label{defBEKN}
Let $K\in\R$ and $\nu\ge0$. We say that a Dirichlet form
$\cE$ in $L^2(X,\mm)$ as in \eqref{eq50}
satisfies a functional $\BE KN$ condition
if one of the equivalent properties in
Corollary~\ref{leBE-equivalent} holds
with $N:=1/\nu$.
\end{definition}

Notice that
\[
\BE KN \Rightarrow \BE{K'}
{N'}\qquad\mbox{for every }K'\le K, N'
\ge N,
\]
in particular $\BE KN \Rightarrow\BE K\infty$.

%
\begin{remark}[(Carr\'e du Champ in the case $N=\infty$)]
If a strongly local Dirichlet form $\cE$ satisfies $\BE K\infty$ for
some $K\in\R$, then
it admits a Carr\'e du Champ $\Gamma$ on $\cV$,
that is, $\cG=\cV$, by (v) of Corollary~\ref{leBE-equivalent}; moreover,
the spaces
%
\begin{equation}
\begin{aligned} \cVu&:= \bigl\{\varphi\in\cV_{\infty}\dvtx
\Gamma(\varphi )\in L^\infty(X,\mm) \bigr\},
\\
\cVd&:= \bigl\{\varphi\in\cVu\dvtx \DeltaE\varphi\in L^\infty(X,\mm)
\bigr\}, \end{aligned} %
\label{eq67}
\end{equation}
are dense in
$\cV$:
in fact \eqref{eqBEKnu.6} shows that they are invariant under the
action of $(\sP t)_{t\ge0}$ and combined with \eqref{eqBEKnu.4}
[and possibly combined with a further mollification as in \eqref{eqGG4}
in the case of $\cVd$] it also shows that
any element of $L^2\cap L^\infty(X,\mm)$
belongs to
their closure w.r.t. the $L^2(X,\mm)$ norm.
The invariance and the standard approximation argument of, for example,
\cite{AGS11b}, Lemma~4.9, yield the density in $\cV$.
\end{remark}

\section{Energy metric measure structures}
\label{secMeMe}

In this section,
besides the standing assumptions we made on $\cE$, we shall
study the relation between the measure/energetic structure of $X$
and an additional metric structure. Our main object will be the
canonical distance ${\mathsf d}_\cE$
associated to the Dirichlet form $\cE$, that we will introduce and
study in the next Section~\ref{subsecEMMspace}.
Before doing that, we will recall the metric notions that will be
useful in the following. Since many properties will just depend of
a few general compatibility conditions between the metric and the
energetic structure, we will try to enucleate such a conditions
and state the related theorems in full generality.

Our first condition just refers to the measure $\mm$ and a
distance ${\mathsf d}$ and it does not involve the Dirichlet form $\cE$:

{
\renewcommand\thelonglist{\MD.\alph{longlist}}
\renewcommand\labellonglist{(\thelonglist)}
%
\begin{condition*}[(MD: Measure-Distance interaction)]\label{MD}
${\mathsf d}$ is a distance on $X$
such that:
\begin{longlist}[(\MD.b)]
\item\label{MDa} $(X,{\mathsf d})$ is a complete and separable
metric space, $\cB$ coincides with the completion of the
Borel $\sigma$-algebra of $(X,{\mathsf d})$ with respect to $\mm$, and
$\supp(\mm)=X$;
\item\label{MDb}
$\mm(B_r(x))<\infty$ for every $x\in X$, $r>0$.
\end{longlist}
\end{condition*}
}%

Besides the finiteness condition (\ref{MDb}), we will often
assume a
further exponential growth
condition on the measures of the balls of $(X,{\mathsf d})$,
namely that there exist $x_0\in X$, $M>0$ and $c\geq0$ such that
\renewcommand{\theequation}{MD.exp}%
\begin{equation}
\label{eq80} \mm\bigl(B_r(x_0)\bigr)\le M\exp\bigl(c
r^2\bigr)\qquad\mbox{for every }r\ge0.
\end{equation}
\renewcommand{\theequation}{\arabic{section}.\arabic{equation}}%
\setcounter{equation}{0}%
In this case, we will collectively refer to the above conditions
(\hyperref[MD]{\MD}) and \eqref{eq80} as (MD$+$exp).
%
\subsection{Metric notions}
\label{subsecpreliminaries}

In this section, we recall a few basic definitions and results which
are related to a metric
measure space $(X,{\mathsf d},\mm)$ satisfying (\hyperref[MD]{\MD}).

\textit{Absolutely continuous curves, Lipschitz
functions and slopes.}
$\AC p{[a,b]}X$, $1\leq p\leq\infty$, is the collection of all the
absolutely continuous curves
$\gamma\dvtx [a,b]\to X$
with finite $p$-energy: $\gamma\in\AC p{[a,b]}X$ if
there exists $v\in L^p(a,b)$ such that
%
\begin{equation}
\label{eq47} {\mathsf d}\bigl(\gamma(s),\gamma(t)\bigr)\le\int
_s^t v(r)\,\d r\qquad\mbox {for every } a\le s
\le t\le b.
\end{equation}
The metric velocity of $\gamma$, defined by
\[
\llvert \dot\gamma\rrvert (r):=\lim_{h\to0}
\frac{{\mathsf d}(\gamma(r+h),\gamma
(r))}{\llvert h\rrvert },
\]
exists for $\Leb1$-a.e. $r\in(a,b)$, belongs to $L^p(a,b)$,
and provides the minimal function $v$, up to $\Leb1$-negligible sets,
such that \eqref{eq47} holds.
The length of an absolutely continuous curve $\gamma$ is then defined by
$\int_a^b\llvert \dot\gamma\rrvert (r)\,\d r$.

We say that $(X,{\mathsf d})$ is a \emph{length space} if
for every $x_0, x_1\in X$
%
\begin{equation}
\label{eq21} {\mathsf d}(x_0,x_1)=\inf \biggl\{\int
_0^1\llvert \dot\gamma\rrvert (r)\,\d r\dvtx
\gamma\in\AC{} {[0,1]}X, \gamma(i)=x_i \biggr\}.
\end{equation}
We denote by $\Lip(X)$ {the space of all Lipschitz functions}
$\varphi\dvtx X\to\R$, by $\Lip_b(X)$ the subspace of bounded functions
and by $\Lip^1(X)$ the subspace of functions with Lipschitz constant
less than $1$.

Every Lipschitz function $\varphi$
is absolutely continuous along any absolutely continuous curve;
we say that a bounded Borel function
$g\dvtx  X\to[0,\infty)$ is an \emph{upper gradient} of $\varphi\in\Lip
(X)$ if
for any curve $\gamma\in\AC{}{[a,b]}X$ the absolutely continuous map
$\varphi\circ\gamma$ satisfies
%
\begin{equation}
\label{eq15} \biggl\llvert \frac\d{\d t}\varphi\bigl(\gamma(t)\bigr)\biggr
\rrvert \le g\bigl(\gamma(t)\bigr) \llvert \dot\gamma\rrvert (t)\qquad\mbox{for $
\Leb 1$-a.e. $t\in(a,b)$.}
\end{equation}
Among the upper gradients of a function $\varphi\in\Lip(X)$,
its \emph{slopes} and its \emph{local Lipschitz constant}
play a crucial role: they are defined by $0$ at every isolated point and by
\begin{eqnarray*}
&\displaystyle \bigl\llvert \rmD^\pm\varphi\bigr\rrvert (x) := \limsup
_{y\to x}\frac
{ (\varphi(y)-\varphi(x) )_\pm}{{\mathsf d}(y,x)},\qquad \llvert \rmD\varphi\rrvert (x):=
\limsup_{y\to x}\frac
{\llvert \varphi(y)-\varphi(x)\rrvert }{{\mathsf d}(y,x)},&
\\
&\displaystyle \bigl\llvert \rmD^*\varphi\bigr\rrvert (x) := \mathop{\limsup
_{y,z\to x}}\limits
_{y\neq z}\frac
{\llvert \varphi(y)-\varphi(z)\rrvert }{{\mathsf d}(y,z)},&
\end{eqnarray*}
at every accompulation point $x\in X$. Whenever $(X,\mathsf d)$
    is a length space we have
\begin{equation}
\label{eq18} \bigl\llvert \rmD^*\varphi\bigr\rrvert (x) =\limsup
_{y\to x}\llvert \rmD\varphi\rrvert (y), \qquad\Lip(\varphi)=\sup
_{x\in X}\bigl\llvert \rmD\varphi(x)\bigr\rrvert = \sup
_{x\in X}\bigl\llvert \rmD^*\varphi(x)\bigr\rrvert .
\hspace*{-25pt}
\end{equation}
In fact, \eqref{eq15} written for $g:=\llvert \rmD\varphi\rrvert $ and
the length condition \eqref{eq21} easily yield
\[
\bigl\llvert \varphi(y)-\varphi(z)\bigr\rrvert \le{\mathsf d}(y,z)
\sup_{B_{2r}(x)} \llvert \rmD \varphi \rrvert \qquad\mbox{if } y,z\in
B_r(x)
\]
and provide the inequality $\llvert \rmD^*\varphi\rrvert \leq\limsup_{y\to x}\llvert \rmD\varphi\rrvert (y)$.
The proof of the converse inequality is trivial and a similar
argument shows the last identity in \eqref{eq18}.

\textit{The Hopf--Lax evolution formula.}
Let us suppose that $(X,{\mathsf d})$ is a metric space;
the Hopf--Lax evolution map
$Q_t\dvtx \rmC_b(X)\to\rmC_b(X)$, $t\ge0$, is defined by
%
\begin{equation}
Q_t f(x):=\inf_{y\in X} f(y)+\frac{{\mathsf d}^2(y,x)}{2t},
\qquad {Q_0f(x)=f(x).}\label{eq11}
\end{equation}
We introduce as in \cite{AGS11a}, Section~3, the maps
\[
\sfD^+(x,t) :=\sup_{(y_n)}\limsup
_n {\mathsf d}(x,y_n),\qquad \sfD^-(x,t) :=\inf
_{(y_n)}\liminf_n {\mathsf
d}(x,y_n),
\]
where the supremum and the infimum run among minimizing sequences for
\eqref{eq11}.
We recall that $\sfD^+$ and $\sfD^-$ are respectively upper and lower
semicontinuous,
nondecreasing w.r.t. $s$, and that $\sfD^+(x,r)\leq\sfD^-(x,s)\leq
\sfD^+(x,s)$ whenever $0<r<s$. These properties imply
$\sfD^-(x,s)=\sup_{r<s}\sfD^+(x,r)$. We shall need the inequality
%
\begin{equation}
Q_{s'}f(x)-Q_sf(x)\leq\frac{(\sfD^+(x,s))^2}{2} \biggl(
\frac{1}{s'} -\frac{1}s \biggr),\qquad s'>s,\label{eq13}
\end{equation}
as well as the pointwise properties
%
\begin{equation}
\label{eqidentities0} -\frac{\d^\pm}{\d s}Q_sf(x)=\frac{(\sfD^\pm(x,s))^2}{2s^2}, \qquad
\llvert \rmD Q_sf\rrvert (x){\le} \frac{\sfD^+(x,s)}{s},
\end{equation}
(these are proved in Proposition~3.3 and Proposition~3.4
of \cite{AGS11a}). Since
\[
{\mathsf d}(y,x)>2s\Lip(f)\quad\Rightarrow\quad f(y)+\frac{{\mathsf d}^2(x,y)}{2s}>f(x)\ge
Q_s f(x),
\]
we immediately find $\sfD^+(x,s)\le2s \Lip(f)$.

Since by \eqref{eq13} the map $s\mapsto Q_s f(x)$ is locally
Lipschitz in $(0,\infty)$, integrating the first identity of \eqref
{eqidentities0}
in the interval $(\eps,t)$, $\eps>0$, and then letting $\eps\down0$
we get
\[
f(x)-Q_t f(x)=\int_0^t
\frac{(\sfD^+(x,s))^2}{2s^2}\,\d s= \frac{t} 2\int_0^1
\biggl(\frac{\sfD^+(x,tr)}{tr} \biggr)^2\,\d r.
\]
Combining the above identity with the formula expressing the
descending slope (see \cite{AGS08}, Lemma~3.1.5)
\[
\bigl\llvert \rmD^- f\bigr\rrvert ^2(x)=2\limsup
_{t\down0}\frac{f(x)-Q_t f(x)}t,
\]
we end up with
%
\begin{equation}
\label{eq25} \llvert \rmD f\rrvert ^2(x)\ge\bigl\llvert \rmD^- f
\bigr\rrvert ^2(x)= \limsup_{t\down0} \int
_0^1 \biggl(\frac{\sfD^+(x,tr)}{tr}
\biggr)^2\,\d r.
\end{equation}
When $(X,{\mathsf d})$ is a length space
$(Q_t)_{t\ge0}$ is a semigroup and we have the refined identity
\cite{AGS11a}, Theorem~3.6,
%
\begin{equation}
\label{eqidentities} \frac{\d^+}{\d s}Q_sf(x)=-\frac{1}{2}\llvert
\rmD Q_s f\rrvert ^2(x)=- \frac
{(\sfD^+(x,s))^2}{2s^2}.
\end{equation}
In addition, \eqref{eqidentities0} and the length property of $X$
yield the a priori bounds
%
\begin{equation}
\label{eqaprioriQt} \operatorname{Lip}(Q_s f)\leq2 \operatorname{Lip}(f),\qquad
\operatorname{Lip} \bigl(Q_\cdot f(x)\bigr)\leq2 \bigl[
\operatorname{Lip}(f) \bigr]^2.
\end{equation}

\textit{The Cheeger energy.}
The Cheeger energy of a function $f\in L^2(X,\mm)$ is
defined as
\[
\C(f):=\inf \biggl\{\liminf_{n\to\infty}\frac{1}2
\int_X \llvert \rmD f_n\rrvert ^2
\,\d\mm\dvtx f_n\in\Lip_b(X), f_n\to f\mbox{
in }L^2(X,\mm) \biggr\}.
\]
If $f\in L^2(X,\mm)$ with $\C(f)<\infty$, then there exists a unique
function $\llvert \rmD f\rrvert _w\in L^2(X,\mm)$, called \emph{minimal weak
gradient of $f$},
satisfying the two conditions
\begin{eqnarray*}
 &&\Lip_b(X)\cap L^2(X,\mm)\ni
f_n\weakto f,\qquad  \llvert \rmD f_n\rrvert \weakto G\qquad
\mbox{in }L^2(X,\mm)\\
 &&\qquad \Rightarrow\quad\llvert \rmD f\rrvert
_w\le G,\\
&& \C(f)=\frac{1}2 \int_X \llvert \rmD
f\rrvert _w^2\,\d\mm.
\end{eqnarray*}
In the next Section~\ref{subsecEMMspace}, we will also use
a {more refined}
approximation result,
replacing $\llvert \rmD f_n\rrvert $ with $\llvert \rmD^* f_n\rrvert $ in the approximation,
proved in \cite{AGS11c}, Section~8.3
(see also \cite{Ambrosio-Colombo-DiMarino12} for a more detailed proof):
for every $f\in L^2(X,\mm)$ with $\C(f)<\infty$
there exist $f_n\in\Lip_b(X)\cap L^2(X,\mm)$ such that
%
\begin{equation}
\label{eq105} f_n\to f,\qquad\bigl\llvert \rmD^* f_n
\bigr\rrvert \to\llvert \rmD f\rrvert _w\qquad\mbox{strongly in
}L^2(X,\mm).
\end{equation}

\textit{Wasserstein distances.}
The metric structure allows us to introduce the
corresponding spaces $
\prob{X}$ of Borel
probability measures and ${\mathscr P}_p(X)$ of Borel probability
measures with finite
$p$th moment, namely $\mu\in{\mathscr P}_p(X)$ if
\[
\int_X {\mathsf d}^p(x,x_0)
\,\d\mu(x){<\infty} \qquad {\mbox{for some, and then for all, $x_0\in
X$.}}
\]
The $L^p$-Wasserstein transport (extended) distance $W_p$ on $\prob X$
is defined by
%
\begin{eqnarray}
\label{eq10} &&W_p^p(\mu_1,
\mu_2)
\nonumber
\\[-8pt]
\\[-8pt]
&&\qquad :=\inf \biggl\{\int_{X\times
X}{\mathsf
d}^p(x_1,x_2)\,\d\mmu(x_1,x_2)
\dvtx \mmu\in
\prob{X\times X}, \pi^i_\sharp \mmu=
\mu_i \biggr\},
\nonumber
\end{eqnarray}
where $\pi^i\dvtx X\times X\ni(x_1,x_2)\to x_i$ is the coordinate map and
for a Borel measure $\mu\in\prob Y$ on a metric space $Y$ and every
Borel map $\rr\dvtx Y\to X$, the push-forward measure $\rr_\sharp\mu\in
\prob X$
is defined by
\[
\rr_\sharp\mu(B):=\mu\bigl(\rr^{-1}(B)\bigr)\qquad\mbox{for
every Borel set }B\subset X.
\]
In particular, the competing measures $\mmu\in
\prob{X\times X}$ in
\eqref{eq10} have
marginals $\mu_1$ and $\mu_2$, respectively.

We also introduce a family of bounded distances on $\prob X$
associated to a
\[
\begin{tabular}{@{}p{350pt}@{}} continuous, concave and bounded modulus
of continuity $\beta\dvtx [0,\infty)\to[0,\infty)$, with $0=\beta(0)<\beta(r)$
for every $r>0$. \end{tabular} %
\]
As in \eqref{eq10}, we set
\[
W_{(\beta)}(\mu_1,\mu_2):=\inf \biggl
\{\int_{X^2}\beta \bigl({\mathsf d}(x_1,x_2)
\bigr)\,\d\mmu(x_1,x_2)\dvtx \mmu\in
\prob{X
\times X}, \pi ^i_\sharp \mmu=\mu_i \biggr\}.
\]
$W_{(\beta)}$ is thus the $L^1$-Wasserstein distance induced
by the
bounded distance ${\mathsf d}_\beta(x_1,x_2):=\beta({\mathsf d}(x_1,x_2))$.
$(\prob X,W_{(\beta)})$ is then a complete and separable
metric space,
whose topology coincides with the topology of weak convergence of
probability measures.

\textit{Entropy and $\RCD K\infty$ spaces.}
In the following, we will fix $x_0\in X, z>0, c\ge0$ such that
%
\begin{equation}
\label{eq8} \tilde\mm=\frac{1}z \rme^{-V^2}\mm\in
\Probabilities{X}\qquad \mbox{with } V(x):=\sqrt c {\mathsf d}(x,x_0).
\end{equation}
Notice that in the case $\mm(X)<\infty$ we can always take
$V\equiv c=0$ with $z=\mm(X)$.
When $\mm(X)=\infty$, the possibility to choose $x_0\in X, z>0, c\ge0$
satisfying \eqref{eq8}
follows from \eqref{eq80} [possibly with a different constant $c$; it
is in fact
equivalent to \eqref{eq80}].
If $\nn\dvtx \cB\to[0,\infty]$ is $\sigma$-additive, the \emph
{relative entropy} $\operatorname{Ent}_{\nn}(\rho)$ of a
probability measure $\rho\dvtx \cB\to[0,1]$ with respect to $\nn$ is
defined by
\[
\operatorname{Ent}_{\nn}(\rho):= %
\cases{
\displaystyle \int_X f\log f\,\d\nn,&\quad if $\rho=f\nn$;
\cr
\displaystyle +\infty,&\quad otherwise. } %
\]
The expression makes sense if $\nn$ is a probability measure, and
thanks to Jensen's inequality
defines a nonnegative functional. More generally,
we recall (see \cite{AGS11a}, Lemma~7.2, for the simple proof) that,
when $\nn=\mm$ and \eqref{eq8}
hold,
the formula above makes sense on measures $\rho=f\mm\in\probt{X}$
thanks to the fact that the negative part
of $f\log f$ is $\mm$-integrable. Precisely, defining $\tilde\mm\in
\Probabilities{X}$ as in \eqref{eq8} above,
the following formula for the change of reference measure will be
useful to estimate the negative part of $\entv(\rho)$:
\[
\entv(\rho)=\operatorname{Ent}_{\tilde\mm}(\rho)-\int
_X V^2(x)\,\d \rho(x)-\log z.
\]

%
\begin{definition}[{[$\RCD{K}{\infty}$ spaces]}]
Let $(X,{\mathsf d},\mm)$ be a metric measure space satisfying
(MD$+$exp) and the length property \eqref{eq21}.
We say that $(X,{\mathsf d},\mm)$ has Riemannian curvature
bounded from below by $K\in\R$ if for all $\rho\in\probt{X}$ there
exists a solution
$(\sH{t}\rho)_{t\ge0}\subset\probt X$
of the $\EVI_K$-differential inequality starting from
$\rho$, namely
$\sH{t}\rho\to\rho$ as $t\downarrow0$
and (denoting by $\frac{\d^+}{\d t}$ the upper right derivative)
%
\begin{eqnarray}
\label{defEVIK} \frac{\d^+}{\d t}\frac{W_2^2(\sH{t}\rho,\nu)}2+\frac{K}2W_2^2(
\sH{t}\rho,\nu)+\entv(\sH{t}\rho)\leq\entv(\nu)
\nonumber
\\[-8pt]
\\[-8pt]
\eqntext{\mbox{for every $t\in(0,\infty)$}}
\end{eqnarray}
for all $\nu\in\probt{X}$ with $\entv(\nu)<\infty$.
\end{definition}

As we already quoted in the \hyperref[sec1]{Introduction},
among the properties of\break $\operatorname{RCD}(K,\infty)$ spaces proved
in \cite{AGS11b}
we recall that the Cheeger energy
\renewcommand{\theequation}{QCh}%
\begin{equation}
\label{eq88} %
\begin{tabular}{@{\quad\qquad}p{320pt}@{\hspace*{-5pt}}} $\C$ is quadratic, that
is, $\C(f)=\frac{1}2{\cE_\C}(f)$ for a Dirichlet form $
\cE_\C$ as in \eqref{eq50}, with $\llvert \rmD f\rrvert
_w^2=\Gamma (f)$ for every $f\in D(\C )=\cV$,
\end{tabular} %
\end{equation}
\renewcommand{\theequation}{\arabic{section}.\arabic{equation}}%
\setcounter{equation}{14}
(in particular $\cG=\cV$ and $\cE_\C$ admits the Carr\'e du Champ
$\Gamma$ in $\cV$)
and {$\cE_\C$}
satisfies the $\BE K\infty$ condition.
A further crucial property will be recalled in Section~\ref{subsecEMMspace} below; see
Condition (\hyperref[ED]{\ED}) and Remark~\ref{remQCh-vs-Dirichlet}.

\subsection{The dual semigroup and its contractivity properties}\label
{secdual}

In this section, we study the contractivity property
of the dual semigroup of $(\sP t)_{t\ge0}$ in the spaces of Borel
probability measures.

Thus, $\cE$ is a strongly local Dirichlet form as in \eqref{eq50},
$(\sP t)_{t\ge0}$ satisfies the mass-preserving property \eqref{eq4} and
${\mathsf d}$ is a distance on $X$ satisfying condition (\hyperref[MD]{\MD})
[assumption \eqref{eq80} is not needed here].

We see how, under the mild contractivity property
%
\begin{eqnarray}
\label{eqconvpuntLip1} \sP tf\in\operatorname{Lip}_b(X)\quad \mbox{and} \quad
\operatorname{Lip}(\sP t f)\leq C(t)\operatorname {Lip}(f)
\nonumber
\\[-8pt]
\\[-8pt]
\eqntext{\mbox{for all }f\in\operatorname{Lip}_b(X)\cap
L^2(X,\mm),}
\end{eqnarray}
with $C$ bounded on all intervals $[0,T]$, $T>0$, a dual semigroup
$\sfH_t$ in ${\mathscr P}(X)$ can be defined, satisfying the
contractivity property
\eqref{eqW1contractivity} below w.r.t. $W_{(\beta)}$ and
to $W_1$.
This yields also the fact that $\sP t$ has a (unique) pointwise
defined version $\tsP{t}$, canonically defined also on bounded Borel functions,
and mapping $\rmC_b(X)$ to $\rmC_b(X)$ [we will always
identify $\sP t f$ with $\tsP{t} f$ whenever
$f\in\rmC_b(X)$]. Then we shall prove, following the lines of \cite
{Kuwada10},
that in length metric spaces the pointwise Bakry--\'Emery-like assumption
%
\begin{eqnarray}
\label{eqBEpointwise} \llvert \rmD\sP t f\rrvert ^2(x)\leq
C^2(t) \tsP{t}\llvert \rmD f\rrvert ^2(x)
\nonumber
\\[-8pt]
\\[-8pt]
\eqntext{\mbox{for all $x\in X$, $f\in\mbox {Lip}_b(X)\cap
L^2(X,\mm)$,}}
\end{eqnarray}
with $C$ bounded on all intervals $[0,T]$, $T>0$,
provides contractivity of $\sH{t}$ even w.r.t. $W_2$. Notice
that formally \eqref{eqBEpointwise} implies \eqref{eqconvpuntLip1},
but one has to take into
account that \eqref{eqBEpointwise} involves a pointwise defined
version of the
semigroup, which {might} depend on \eqref{eqconvpuntLip1}.

A crucial point here is that we want to avoid
doubling or local Poincar\'e assumptions on
the metric measure space.
For the aim of this section, we introduce the following notation:
%
\begin{equation}
\begin{tabular}{@{\quad\qquad}p{320pt}@{\hspace*{-5pt}}} $\cZ$ is the collection of probability
densities $f\in L^1_+(X,\mm)$, $\mathcal K$ is the set of nonnegative
bounded Borel functions $f\dvtx X\to\R$ with bounded support. \end{tabular}
\label{eq16}
\end{equation}

%
\begin{proposition}
\label{propprecise-rep}
Let $\cE$ and $(\sP t)_{t\ge0}$ be as in \eqref{eq50} and \eqref{eq4}
and let ${\mathsf d}$ be a distance on $X$ satisfying the condition
(\hyperref[MD]{\MD}). If
\eqref{eqconvpuntLip1} holds, then:
\begin{longlist}[(iii)]
\item[(i)]
The mapping $\sH{t}(f\mm):=(\sP t f)\mm$, $f\in\cZ$,
uniquely extends to a $W_{(\beta)}$-Lipschitz map $\sfH_t\dvtx
{\mathscr P}(X)\to\prob X$ satisfying
for every $\mu,\nu\in\mathscr P(X)$
%
\begin{eqnarray}
\label{eq55} W_{(\beta)}(\sfH_t\mu,\sfH_t\nu)&
\leq& \bigl(C(t)\lor 1 \bigr) W_{(\beta)}(\mu,\nu), 
\\
\label{eqW1contractivity} W_1(\sfH_t\mu,\sfH_t\nu)&
\leq& C(t)W_1(\mu,\nu), 
\end{eqnarray}
with $C(t)$ given by \eqref{eqconvpuntLip1}.
\item[(ii)] Defining $\tsP{t}f(x):=\int_X f\,\d\sH{t}\delta_x$ on
bounded or nonnegative Borel functions,
$\tsP{t}$ {is everywhere defined} and maps $\rmC_b(X)$ to
$\rmC_b(X)$.
Moreover,
$\tsP{t}$ is a version of $\sP t$ for all Borel functions $f$ with
$\int_X \llvert f\rrvert \,\d\mm<\infty$,
namely $\tsP{t}f(x)$ is
defined and $\sP tf(x)=\tsP{t}f(x)$ for
$\mm$-a.e. $x\in X$. In {particular,}
$\tsP{t} f$ is $\mm$-a.e. defined for every Borel function
semiintegrable w.r.t. $\mm$.
\item[(iii)] $\sfH_t$ is dual to $\tsP{t}$ in the following sense:
%
\begin{eqnarray}
\label{eqdual} %
\int_X f\,\d\sH{t}\mu=\int
_X \tsP{t} f\,\d\mu
\nonumber
\\[-8pt]
\\[-8pt]
\eqntext{\mbox{for all $f\dvtx X\to\R$ bounded Borel, $\mu\in{\mathscr P}(X)$.}}
\end{eqnarray}
\item[(iv)]
For every $f\in\rmC_b(X)$ and $x\in X$, we have
$\lim_{t\downarrow0}\tsP{t} f(x)=f(x)$. In particular,
for every $\mu\in\prob X$ the map $t\mapsto\sfH_t\mu$
is weakly continuous in $\prob X$.
\end{longlist}
\end{proposition}

\begin{pf} The concavity of $\beta$ yields that $\beta$ is
subadditive, so that ${\mathsf d}_\beta$ is a distance.
Let us first prove that $\sP t$ maps
${\mathsf d}_\beta$-Lipschitz functions in
${\mathsf d}_\beta$-Lipschitz functions.

We use the envelope representation
\[
\beta(r)=\inf_{(a,b)\in\rmB}a+br,\qquad \rmB= \bigl\{(a,b)\in[0,
\infty)^2
\dvtx \beta(s)\le a+bs \mbox{ for every }s\ge0
\bigr\},
\]
and the fact that a function $\varphi\dvtx X\to\R$ is $\ell$-Lipschitz
with respect to a
distance ${\mathsf d}$ on $X$ if and only if
\[
\varphi(x)\le R_\ell\varphi(x):=\inf_{y\in X}
\varphi(y)+\ell {\mathsf d}(x,y)\qquad\mbox{for every }x\in X.
\]
It is easy to check that if $\varphi$ is bounded, then $R_\ell\varphi
$ is bounded
and satisfies
%
\begin{equation}
\label{eq60} \inf_X \varphi\le R_\ell
\varphi(x)\le\varphi(x)\qquad\mbox{for every } x\in X, \ell\ge0.
\end{equation}
{Furthermore, if $\varphi$ has also bounded support then $R_\ell
\varphi$ has bounded support as well,}
so that $R_\ell$ maps $\mathcal K$ in $\mathcal K$. The contractivity property
\eqref{eqconvpuntLip1} then yields for every $\varphi\in\Lip
_b(X)\cap L^2(X,\mm)$ with
$\Lip(\varphi)\le b$
\[
\sP t \varphi\le R_{C(t)b}(\sP t \varphi).
\]
Let us now suppose that $\varphi\in\mathcal K$
is ${\mathsf d}_\beta$-Lipschitz, with
Lipschitz constant less than $1$, so that for every $(a,b)\in\rmB$
\[
\varphi(x)\le\inf_{y\in X} \varphi(y)+\beta\bigl({\mathsf
d}(x,y)\bigr)\le \inf_{y\in X} \varphi(y)+a+b {\mathsf
d}(x,y)=a+R_b \varphi(x).
\]
Since $(\sP t)_{t\ge0}$ is order preserving,
we get
for $\varphi\in\mathcal K$
\[
\sP t \varphi\le a+\sP t( R_b \varphi)\le a+R_{C(t)b} \bigl(
\sP t (R_b \varphi) \bigr) \le a+R_{C(t)b} (\sP t \varphi ),
\]
where we used the right inequality of \eqref{eq60} and the fact that
$\Lip(R_b\varphi)\le b$.
It follows that for every $x,y\in X$ and every $(a,b)\in\rmB$
\[
\sP t \varphi(x)\le\sP t\varphi(y)+a+C(t)b{\mathsf d}(x,y),
\]
{that is,}
\[
\sP t\varphi(x)-\sP t \varphi(y)\le\beta\bigl(C(t){\mathsf d}(x,y)\bigr).
\]
By Kantorovich duality, for $f, g\in\cZ$ we get
\begin{eqnarray*}
W_{(\beta)}(\sP tf \mm,\sP tg \mm)& =&\sup \biggl\{\int
_X \varphi (\sP tf-\sP tg )\,\d\mm 
\dvtx \varphi\in\mathcal K, \operatorname{Lip}_{{\mathsf d}_\beta}(\varphi)\leq1
\biggr\}
\\
&=&\sup \biggl\{\int_X \sP t\varphi (f-g )\,\d\mm
\dvtx \varphi\in \mathcal K, \operatorname{Lip}_{{\mathsf d}_\beta}(
\varphi)\leq1 \biggr\}
\\
&\leq& \bigl(C(t)\lor1\bigr)W_{(\beta)}(f\mm,g\mm).
\end{eqnarray*}
Hence, \eqref{eq55} holds when $\mu=f\mm$, $\nu=g\mm$.
By the density of $\{f\mm\dvtx f\in\cZ\}$ in ${\mathscr P}(X)$
w.r.t. $W_{(\beta)}$ we
get \eqref{eq55} for arbitrary $\mu,\nu\in\prob X$. A similar
argument yields \eqref{eqW1contractivity}.

(ii) Continuity of $x\mapsto\tsP{t}f(x)$ when $f\in\rmC_b(X)$ follows
directly by the continuity of $x \mapsto\sfH_t\delta_x$.
The fact that $\tsP{t}f$ is a version of $\sP t$ when $f$ is Borel and
$\mm$-integrable
is a simple consequence of the fact that $\sP t$ is self-adjoint; see
\cite{AGS11b} for details.

(iii) When $f, g\in\rmC_b(X)\cap L^2(X,\mm)$ and $\mu=g\mm$, the
identity \eqref{eqdual} reduces to the fact that
$\sP t$ is selfadjoint. The general case can be easily achieved using
a monotone class argument.

(iv) In the case of $\varphi\in
\Lip_b(X)\cap L^2(X,\mm)$ it is easy to prove that
$\tsP{t}\varphi(x)\to\varphi(x)$ for all $x\in X$ as $t\downarrow0$,
since $\tsP{t}\varphi$
are equi-Lipschitz, converge in $L^2(X,\mm)$ to $\varphi$ and $\supp
\mm=X$.
By \eqref{eqdual}, it follows that
\begin{eqnarray}
\lim_{t\downarrow0}\int_X \varphi\,\d
\sfH_t\mu= \lim_{t\downarrow0}\int_X
\tsP{t} \varphi\,\d\mu= \int_X \varphi\,\d\mu \nonumber
\\
\eqntext{\mbox{for every }\varphi\in\Lip_b(X)\cap L^2(X,\mm).}
\end{eqnarray}
By a density argument, we obtain that the same holds on $\Lip_b(X)$,
so that $t\mapsto\sH{t}\mu$
is weakly continuous. Since $\tsP{t} f(x)=\int_X f\,\d\sfH_t\delta
_x$, we conclude that
$\tsP{t} f(x)\to f(x)$ for arbitrary $f\in\rmC_b(X)$.
\end{pf}

Writing $\mu=\int_X\delta_x\,\d\mu(x)$ and recalling the
definition of $\tsP{t}$, we can also write
\eqref{eqdual} in the form
%
\begin{equation}
\label{eqollo} \sfH_t\mu=\int_X
\sfH_t\delta_x\,\d\mu(x)\qquad\forall\mu\in {\mathscr
P}(X).
\end{equation}
In order to prove that \eqref{eqBEpointwise} yields the contractivity
property
\renewcommand{\theequation}{$W_2$-cont}%
\begin{equation}
\label{eqKWcontraction} W_2(\sfH_t\mu,\sfH_t\nu)\le
C(t) W_2(\mu,\nu)\qquad\mbox{for every } \mu, \nu\in{\mathscr P}(X),
t\ge0,\hspace*{-40pt}
\end{equation}
\renewcommand{\theequation}{\arabic{section}.\arabic{equation}}%
\setcounter{equation}{22}%
we need the following auxiliary results.

%
\begin{lemma}\label{leuscint}
Assume that $(\mu_n)\subset\prob X$ weakly converges to $\mu\in
\prob X$, and that $f_n$ are equibounded Borel
functions satisfying
\[
\limsup_{n\to\infty}f_n(x_n)\leq f(x)\qquad
\mbox{whenever $x_n\to x$}
\]
for some Borel function $f$. Then $\limsup_n\int_Xf_n\,\d\mu_n\leq
\int_X f\,\d\mu$.
\end{lemma}

\begin{pf} Possibly adding a constant, we can assume that all
functions $f_n$ are nonnegative.
For all integers $k$ and $t>0$, it holds
\[
\mu \Biggl(\overline{\bigcup_{m=k}^\infty
\{f_m>t\}} \Biggr)\geq \limsup_{n\to\infty}
\mu_n \Biggl(\overline{\bigcup_{m=k}^\infty
\{f_m>t\}} \Biggr)\geq \limsup_{n\to\infty}
\mu_n\bigl(\{f_n>t\}\bigr).
\]
Taking the intersection of the sets in the left-hand side and noticing that it is contained, by assumption, in
$\{f\geq t\}$, we get $\limsup_n\mu_n(\{f_n>t\})\leq\mu(\{f\geq t\}
)$. By Cavalieri's formula and Fatou's lemma,
we conclude.
\end{pf}

%
%
\begin{lemma}\label{lebasicKW}
Assume \eqref{eqconvpuntLip1}, \eqref{eqBEpointwise} and the length
property \eqref{eq21}.
For all $f\in\operatorname{Lip}_b(X)$ nonnegative and with bounded support,
$Q_t f$ is Lipschitz, nonnegative with bounded support and it holds
\[
\bigl\llvert \sP t Q_1f(x)-\sP t f(y)\bigr\rrvert \leq
\tfrac{1}2 C^2(t){\mathsf d}^2(x,y)\qquad\mbox{for
every }t\geq0, x, y\in X. 
\]
\end{lemma}

\begin{pf} It is immediate to check that
$Q_s f(x)=0$ if
$f(x)=0$, so that
the support
of all functions $Q_sf$, $s\in[0,1]$, are contained in a given ball
and $Q_s f$ are also equi-bounded.

The stated inequality is trivial for $t=0$, so assume $t>0$.
By \eqref{eqidentities}, for every $s>0$, setting $r_k:=s-1/k\uparrow
s$, the
sequence $r_k^2\llvert \rmD Q_{r_k}f\rrvert ^2(x)$ monotonically
converges to the function $\sfD^-(x,s)$;
we can thus pass to the limit in the upper gradient inequality
[which is a consequence of \eqref{eqBEpointwise}]
\[
\bigl\llvert \sP tQ_{r_k}f(\gamma_1)-\sP
tQ_{r_k}f(\gamma_0)\bigr\rrvert \leq C(t)\int
_0^1\sqrt{\tsP{t}\llvert \rmD Q_{r_k}f
\rrvert ^2(\gamma_s)} \llvert \dot\gamma_s
\rrvert \,\d s
\]
to get that
the function $C(t)G_s$, with $G_s(x):=s^{-1}\sqrt{\tsP{t}  (\sfD
^-(x,s)^2 )}$, is an upper gradient for $\sP t(Q_s f)$.
Moreover, combining \eqref{eq13} and \eqref{eq11}, we obtain
%
\begin{eqnarray}
\label{eqveryfine} %
\limsup_{h\downarrow0}\frac{Q_{s+h}f(x_h)-Q_sf(x_h)}{h}&
\leq& \frac{1}{2s^2}\limsup_{h\downarrow0}-\bigl(
\sfD^+(x_h,s)\bigr)^2
\nonumber
\\[-8pt]
\\[-8pt]
&\leq& -\frac{1}{2s^2}\bigl(\sfD^-(x,s)\bigr)^2 %
\nonumber
\end{eqnarray}
along an arbitrary sequence $x_h\to x$.

Let $\gamma$ be a Lipschitz curve with $\gamma_1=x$ and $\gamma
_0=y$. We interpolate with a parameter $s\in[0,1]$,
setting $g(s):=\sP tQ_sf(\gamma_s)$. Using \eqref{eqaprioriQt} and
\eqref{eqconvpuntLip1}, we obtain that $g$ is absolutely continuous
in $[0,1]$, so that we need only to estimate $g'(s)$. For $h>0$, we write
\[
\frac{g(s+h)-g(s)}{h}=\int_X\frac{Q_{s+h}f-Q_s f}{h} \,\d\sH {t}
\delta_{\gamma_{s+h}}+ \frac{\sP tQ_sf(\gamma_{s+h})-\sP tQ_sf(\gamma_s)}{h}
\]
and estimate the two terms separately. The first term can be estimated
as follows:
%
\begin{eqnarray}
\label{eqKuwadacont1} \limsup_{h\downarrow0}\int_X
\frac{Q_{s+h}f-Q_s f}{h} \,\d\sH {t}\delta_{\gamma_{s+h}} &\leq&-\frac{1}{2s^2}\int
_X \sfD^-(\cdot,s)^2 \,\d\sH{t}
\delta_{\gamma_s}
\nonumber
\\[-8pt]
\\[-8pt]
&=&-\frac{1}2 G_s^2(\gamma_s).
\nonumber
\end{eqnarray}
Here, we applied Lemma~\ref{leuscint} with
$f_h(x)=(Q_{s+h}f(x)-Q_sf(x))/h$, $\mu_h=\sH{t}\delta_{\gamma_{s+h}}$
and $\mu=\sH{t}\delta_{\gamma_s}$, taking \eqref{eqveryfine} into account.

The second term can be estimated as follows.
By the upper gradient property of $C(t)G_s$ for $\sP t(Q_s f)$, we get
%
\begin{equation}
\label{eqKuwadacont2} \limsup_{h\downarrow0} \frac{\llvert \sP tQ_sf(\gamma_{s+h})-\sP tQ_sf(\gamma_s)\rrvert }{h}\leq
G_s(\gamma_s)C(t)\llvert \dot\gamma_s
\rrvert
\end{equation}
for a.e. $s\in(0,1)$, more precisely at any Lebesgue point of $|\dot
\gamma|$ and of $s\mapsto G_s(\gamma_s)$.
Combining \eqref{eqKuwadacont1} and \eqref{eqKuwadacont2} and using
the Young inequality, we get
$\llvert \sP t Q_1f(x)-\sP t f(y)\rrvert \leq C^2(t)\frac{1}2\int_0^1|\dot\gamma
_s|^2\,\d s$.
Minimizing with respect to $\gamma$ gives the result.
\end{pf}

%
\begin{theorem} \label{thmKuwadaequivalence}
Let $\cE$ and $(\sP t)_{t\ge0}$ be as in \eqref{eq50} and
\eqref{eq4}, and let ${\mathsf d}$ be a distance on $X$ under the
assumptions (\hyperref[MD]{\MD}) and \eqref{eq21}.
Then \eqref{eqconvpuntLip1} and
\eqref{eqBEpointwise} are satisfied by $(\sP t)_{t\ge0}$
if and only if \eqref{eqKWcontraction} holds.
\end{theorem}

\begin{pf}
We only prove the $W_2$ contraction assuming that \eqref
{eqconvpuntLip1} and
\eqref{eqBEpointwise} hold, since the converse implication
have been already proved in \cite{Kuwada10}
(see also \cite{AGS11b}, Theorem~6.2) and it does not play any role in
this paper.

We first notice that Kantorovich duality provides the identity
\[
\tfrac12 W_2^2(\sfH_t\delta_x,
\sfH_t\delta_y)=\sup\bigl\llvert \sP t
Q_1f(x)-\sP t f(y)\bigr\rrvert,
\]
where the supremum runs in the class of bounded, nonnegative Lipschitz
functions $f$ with bounded support.
Therefore, Lemma~\ref{lebasicKW} gives
%
\begin{equation}
\label{eqKuwadacont3} W_2^2(\sfH_t
\delta_x,\sfH_t\delta_y)\leq
C^2(t){\mathsf d}^2(x,y).
\end{equation}
Now, given $\mu, \nu\in
\prob{X}$ with $W_2(\mu,\nu)<\infty$ and
a corresponding optimal plan $\ggamma$,
we may use a measurable selection theorem (see, e.g., \cite
{Bogachev07}, Theorem~6.9.2) to select in a $\ggamma$-measurable way
optimal plans
$\ggamma_{xy}$ from $\sfH_t\delta_x$ to $\sfH_t\delta_y$. Then we define
\[
\ggamma_0:=\int_{X\times X}\ggamma_{xy}\,\d
\ggamma(x,y),
\]
and notice that, because of \eqref{eqollo}, $\ggamma_0$ is an
admissible plan from $\sfH_t\mu$ to $\sfH_t\nu$.
Since \eqref{eqKuwadacont3} provides the inequality $\int{\mathsf
d}^2
\d
\ggamma_0\leq C^2(t)\int{\mathsf d}^2\,\d\ggamma$, we conclude.
\end{pf}

\subsection{Energy measure spaces}
\label{subsecEMMspace}
In this section, we want to study more carefully the
interaction between the energy and the metric structures, particularly
in the case when
the initial structure is not provided by a distance, but rather by a
Dirichlet form $\cE$.

Given a Dirichlet form $\cE$ in $L^2(X,\mm)$ {as in
\eqref{eq50}}, assume that $\cB$ is the $\mm$-completion of
the Borel $\sigma$-algebra of $(X,\tau)$, where $\tau$ is a given
topology in $X$. Then, under these
structural assumptions, we define a first set of ``locally
\mbox{1-Lipschitz}'' functions as follows:
\[
\cL:= \bigl\{\psi\in\cG\dvtx \Gamma(\psi)\le1 \mm\mbox{-a.e. in }X
\bigr\},\qquad\cL_\rmC:=\cL\cap\rmC(X).
\]
With this notion at hand, we can generate canonically the
intrinsic (possibly infinite) pseudo-distance
\cite{Biroli-Mosco95}:
\[
{\mathsf d}_\cE(x_1,x_2):=\sup
_{\psi\in\cL_\rmC} \bigl\llvert \psi(x_2)-\psi(x_1)
\bigr\rrvert \qquad \mbox{for every }x_1,x_2\in X.
\]
We also introduce {1-Lipschitz} truncation functions
$\rmS_k\in\rmC^1(\R)$, $k>0$, defined by
%
\begin{equation}
\label{eq10tris} \rmS_k(r):=k\rmS(r/k) \qquad\mbox{with } \rmS(r)=
\cases{\displaystyle 1,&\quad if $\llvert r\rrvert \le1$,
\cr
\displaystyle 0,&\quad if $\llvert r\rrvert \ge3$, } %
\qquad\bigl
\llvert \rmS'(r)\bigr\rrvert \le1.\hspace*{-30pt}
\end{equation}
We have now all the ingredients to define the following structure.

%
\begin{definition}[(Energy measure space)]\label{defDirichlet}
Let $(X,\tau)$ be a Polish space, let $\mm$ be a Borel measure
with full support, let $\cB$ be the $\mm$-completion of the Borel
$\sigma$-algebra and
let $\cE$ be a Dirichlet form in $L^2(X,\mm)$ satisfying \eqref
{eq50} of Section~\ref{subsecDiFo}.
We say that $(X,\tau,\mm,\cE)$ is a Energy measure space if:
\begin{longlist}[(b)]
\item[(a)] There exists a function
%
\begin{equation}
\mbox{$\theta\in\rmC(X), \theta\ge0$, such that $\theta_k:=
\rmS_k\circ\theta$ belongs to $\cL$ for every $k>0$.}\hspace*{-25pt}
\label{eq28}
\end{equation}
\item[(b)] ${\mathsf d}_\cE$ is a finite distance
in $X$ which induces the topology $\tau$ and $(X,{\mathsf d}_\cE)$ is
complete.
\end{longlist}
\end{definition}

Notice that if $\mm(X)<\infty$ and $1\in D(\cE)$ then
(a) is always satisfied by choosing $\theta\equiv0$.
In the general case, condition (a) is strictly related
to the finiteness property of the measure of balls (\ref{MDb}).
In fact, we shall see in Theorem~\ref{thmEdmsp-equivalence}
that $(X,{\mathsf d}_\cE,\mm)$
satisfies the measure distance condition
(\hyperref[MD]{\MD}).

%
\begin{remark}[(Completeness and length property)]
\label{remS1}
Whenever ${\mathsf d}_\cE$ induces the topology $\tau$ [and thus
$(X,{\mathsf d}_\cE)$ is a separable space], completeness is not
a restrictive assumption, since it can always be obtained by taking the
abstract completion $\bar X$ of $X$ with respect to ${\mathsf d}_\cE$.
Since $(X,\tau)$ is a Polish space, $X$
can be identified with a Borel subset of $\bar X$
(\cite{Bogachev07}, Theorem~6.8.6),
and $\mm$ can be easily extended to a Borel measure $\bar\mm$
on $\bar X$ by setting $\bar\mm(B):=\mm(B\cap X)$; in particular
$\bar X\setminus X$ is $\bar\mm$-negligible and $\cE$ can be
considered as a Dirichlet form on $L^2(\bar X,\bar\mm)$ as
well. Finally, once completeness is assumed, the length property
is a consequence of the definition of the intrinsic distance
${\mathsf d}_\cE$; see \cite{Sturm98,Stollmann10} in the locally
compact case and the next Corollary~\ref{corlength} in the
general case.
\end{remark}

In many cases, $\tau$ is already induced by a
distance ${\mathsf d}$ satisfying the compatibility condition (\hyperref[MD]{\MD}),
so that we are actually dealing with a structure
$(X,\mathsf d,\mm,\cE)$. In this situation, it is natural to
investigate under which
assumptions the identity ${\mathsf d}={\mathsf d}_\cE$ holds:
this in particular guarantees that $(X,\tau,\mm,\cE)$
is an Energy measure space according to Definition~\ref
{defDirichlet}. In the following remark, we examine the case when $\cE
$ is canonically
generated starting from ${\mathsf d}$ and $\mm$, and then we investigate
possibly more general situations.

%
\begin{remark}[(The case of a quadratic Cheeger energy)]
\label{remQCh-vs-Dirichlet}
Let $(X,{\mathsf d},\mm)$ be a metric measure space
satisfying (\hyperref[MD]{\MD}) and let us
assume that the Cheeger energy is quadratic (i.e., $(X,{\mathsf d},\mm
)$ is
infinitesimally
Hilbertian according to \cite{Gigli12}), $\cE_\C:=2\C$.
Then it is clear that any $1$-Lipschitz function $f\in L^2(X,\mm)$
belongs to $\cL_\rmC$, hence
${\mathsf d}_\cE\geq{\mathsf d}$. It follows that ${\mathsf
d}={\mathsf d}_\cE$ if
and only if
every continuous function $f\in D(\C)$ with $\llvert \rmD f\rrvert _w\le1$ {$\mm
$-a.e. in $X$}
is $1$-Lipschitz w.r.t. ${\mathsf d}$. In particular, this is the case of
$\RCD K\infty$ spaces.

If $X=[0,1]$ endowed with the Lebesgue measure and the Euclidean
distance, and if
$\mm=\sum_n 2^{-n}\delta_{q_n}$, where $(q_n)$ is an enumeration of
$\Q\cap[0,1]$, then it is easy to check that
$\C\equiv0$ (see \cite{AGS11a} for details), hence ${\mathsf d}_\cE
(x,y)=\infty$ whenever $x\neq y$.
\end{remark}

If ${\mathsf d}$ is a distance on $X$ satisfying (\hyperref[MD]{\MD}),
in order to provide links between the Dirichlet form
$\cE$ and the distance ${\mathsf d}$ we introduce
the following condition.

{
\renewcommand\thelonglist{\ED.\alph{longlist}}
\renewcommand\labellonglist{(\thelonglist)}
%
\begin{condition*}[(ED: Energy-Distance interaction)]\label{ED}
${\mathsf d}$ is a distance on $X$ such that:
\begin{longlist}[(\ED.b)]
\item\label{EDa} every function $\psi\in\cL_\rmC$
is $1$-Lipschitz with respect to ${\mathsf d}$;
\item\label{EDb} every function {$\psi\in\Lip(X,{\mathsf d})$ with
$\llvert \rmD\psi\rrvert \leq1$ and bounded support}
belongs to $\cL_\rmC$.
\end{longlist}
\end{condition*}
}

%
\begin{theorem}
\label{thmEdmsp-equivalence}
If $(X,\tau,\mm,\cE)$ is an Energy measure space
according to Definition~\ref{defDirichlet} then
the canonical distance ${\mathsf d}_\cE$ satisfies conditions
(\hyperref[MD]{\MD}), (\hyperref[ED]{\ED}).

Conversely, if $\cE$ is a Dirichlet form as in \eqref{eq50} and
${\mathsf d}$ is a distance on $X\times X$ inducing the topology $\tau
$ and
satisfying
conditions
(\hyperref[MD]{\MD}), (\hyperref[ED]{\ED}), then
$(X,\tau,\mm,\cE)$ is an Energy measure space and
%
\begin{equation}
\label{eq9} {\mathsf d}(x_1,x_2)={\mathsf
d}_\cE(x_1,x_2)\qquad \mbox{for every
}x_1, x_2\in X.
\end{equation}
\end{theorem}

\begin{pf}
Let us first assume that $(X,\tau,\mm,\cE)$ is an Energy measure
space.
(\ref{MDa}) is immediate since ${\mathsf d}_\cE$ is complete by assumption
and $\tau$ is separable.
(\ref{EDa}) is also a direct consequence of the definition of $\cE$,
since
%
\begin{equation}
\label{eq22} \bigl\llvert \psi(x_2)-\psi(x_1)\bigr
\rrvert \le{\mathsf d}_\cE(x_1,x_2)\qquad
\mbox{for every }\psi \in \cL_\rmC, x_1,x_2\in
X.
\end{equation}
Let us now prove (\ref{MDb}) and (\ref{EDb}).

We first observe that the function $\theta$ of \eqref{eq28} is
bounded on each ball $B_r(y)$, $y\in X$ and $r>0$, otherwise
we could find a sequence of points $y_k\in B_r(y)$, $k\in\N$, such that
$\theta(y_k)\ge3k$ and, therefore, $\theta_k(y)-\theta_k(y_k)\ge k$
whenever $\theta(y)\le k$. This contradicts the fact that $\theta_k$
is $1$-Lipschitz
by \eqref{eq22}.
As a consequence, for every $y\in X$ and $r>0$ there
exists $k_{y,r}\in\N$ such that
\[
\theta_k(x)\equiv k\qquad\mbox{for every }x\in
B_r(y), k\ge k_{r,y}.
\]
In particular, since $\theta_k\in L^2(X,\mm)$,
we get that all the sets $B_r(y)$ with $y\in X$ and $r>0$
have finite measure, so that (\ref{MDb}) holds.

We observe that by the separability of $X\times X$
we can find a countable family
$(\psi_n)\subset\cL_\rmC$ such that
\[
{\mathsf d}_\cE(x_1,x_2)=\sup
_n \bigl\llvert \psi_n(x_2)-
\psi_n(x_1)\bigr\rrvert \qquad\mbox{for every
}x_1,x_2\in X.
\]
We set
\[
{\mathsf d}_{k,N}(x_1,x_2):= \Bigl(\sup
_{n\le N} \bigl\llvert \psi_n(x_2)-
\psi_n(x_1)\bigr\rrvert \Bigr)\land \theta_k(x_2),
\]
observing that for every $y\in X$ the map
$x\mapsto{\mathsf d}_{k,N}(y,x)$ belongs to $\cL_\rmC$.
Passing to the limit as $N\to\infty$, it is easy to check that
${\mathsf d}_{k,N}(y,\cdot)\to{\mathsf d}_k(y,\cdot)={\mathsf d}_\cE(y,\cdot)\land
\theta_k$ pointwise in
$X$ and, therefore, in $L^2(X,\mm)$,
since $\theta_k\in L^2(X,\mm)$. We deduce that
${\mathsf d}_k(y,\cdot)\in\cL$ for every $y\in X$ and $k\in\N$.

Let us now prove that every map
$f\in\Lip(X,{\mathsf d}_\cE)$ with $\llvert \rmD f\rrvert \leq1$ and bounded support
belongs to $\cL$; it is not restrictive to assume $f$ nonnegative.
Since $(X,\mathsf{d}_\cE)$ is a length metric space (see Theorem 3.10
    below), $f$ is $1$-Lipschitz and it is easy to check that, setting
$f_k=f\land\theta_k$, it holds
\begin{eqnarray*}
f_k(x)&=& \Bigl(\inf_{z\in X} \bigl(f(z)+{{\mathsf
d}_\cE}(z,x) \bigr) \Bigr)\land \theta_k(x)= \inf
_{z\in X} \bigl( \bigl(f_k(z)+{{\mathsf
d}_\cE}(z,x) \bigr)\land \theta_k(x) \bigr)
\\
&=& \Bigl( \inf_{z\in X} \bigl(f_k(z)+{{\mathsf
d}_\cE}(z,x)\land \theta_k(x) \bigr) \Bigr)\land
\theta_k(x)
\\
&=& \Bigl( \inf_{z\in X} \bigl(f_k(z)+{\mathsf
d}_k(z,x) \bigr) \Bigr)\land \theta_k(x).
\end{eqnarray*}
Let $(z_i)$ be a countable dense set of $X$. The functions
\[
f_{k,n}(x):= \Bigl(\min_{1\le i\le n} f_k(z_i)+{
\mathsf d}_k(z_i,x) \Bigr)\land\theta_k(x)
\]
belong to $\cL$, are nonincreasing with respect to $n$,
and satisfy $0\le f_{k,n}\le\theta_k$.
Since
\[
\bigl\{x\in X\dvtx \theta_k(x)>0\bigr\}\subset\bigl\{x\in X\dvtx
\theta_{3k}(x)=3k\bigr\},
\]
we easily see that $\mm (\supp(\theta_k) )<\infty$.
Passing to the limit as
$n\up\infty$, since $z\mapsto{\mathsf d}_{k}(z,x)$ is continuous, they
converge monotonically to
\[
\Bigl(\inf_{z\in X} \bigl(f_k(z)+{\mathsf
d}_{k}(z,x) \bigr) \Bigr)\land \theta_k(x)=f_k(x),
\]
and their energy is uniformly bounded by
$\mm (\supp(\theta_k) )$.
This shows that $f_k\in\cL$. Eventually, letting $k\up\infty$ and
recalling that $\supp(f_k)\subset\supp(f)$
and $\mm(\supp(f))<\infty$ by
(\ref{MDa}), we obtain $f\in\cL$.

The converse implication is easier: it is immediate to check that
(\ref{EDa}) is equivalent to
%
\begin{equation}
\label{eq30} {\mathsf d}(x_1,x_2)\ge{\mathsf
d}_\cE(x_1,x_2) \qquad\mbox{for every
}x_1,x_2\in X;
\end{equation}
if (\ref{EDb}) holds and balls have finite measure according to
(\ref{MDb}), we have $x\mapsto\rmT_k({\mathsf d}(y,x))\in\cL$ for
every $y\in
X$, where $\rmT_k(r):=r\land\rmS_k(r)$. Since
\[
{\mathsf d}(x_1,x_2)=\rmT_k\bigl({\mathsf
d}(x_2,x_1)\bigr)-\rmT_k\bigl({\mathsf
d}(x_2,x_2)\bigr)\qquad \mbox{whenever } k>{\mathsf
d}(x_1,x_2),
\]
we easily get the converse inequality to \eqref{eq30} and, therefore,
\eqref{eq9} and property (b) of Definition \eqref{defDirichlet}.
In order to get also (a), it is sufficient to take $\theta
(x):={\mathsf d}
(x,x_0)$ for an arbitrary
$x_0\in X$.
\end{pf}

%
%
\begin{theorem}[(Length property of ${\mathsf d}_\cE$)]
\label{corlength}
If $(X,\tau,\mm,\cE)$ is an Energy measure space, then
$(X,{\mathsf d}_\cE)$ is a length metric space, that is, it also satisfies
\eqref{eq21}.
\end{theorem}

\begin{pf}
We follow the same argument as in
\cite{Sturm98,Stollmann10}.
Since $(X,\mathsf d_{\mathcal E})$ is complete, it is well known
(see, e.g., \cite{Burago-Burago-Ivanov01}, Theorem~2.4.16)
that the length condition is equivalent to
show that for every couple of points $x_0, x_1\in X$ and $\eps\in(0,r)$
with $r:={\mathsf d}_\cE(x_0,x_1)$ there exists
an $\eps$-midpoint $y\in X$ such that
\[
{\mathsf d}_\cE(y,x_i)< \frac{r}2 +\eps,\qquad
i=0, 1.
\]
We argue by contradiction assuming that $B_{r/2+\eps}(x_0)\cap
B_{r/2+\eps}(x_1)=
\varnothing$ for some $\eps\in(0,r)$ and we introduce the function
\[
\psi(x):= \bigl(\tfrac{1}2(r+\eps)-{\mathsf d}_\cE(x,x_0)
\bigr)_+- \bigl(\tfrac{1}2(r+\eps)-{\mathsf d}_\cE(x,x_1)
\bigr)_+.
\]
$\psi$ is Lipschitz, has bounded support and it is easy to check
that $\Gamma(\psi)\le 1$ $\mm$-a.e. in $X$, since
$B_{r/2+\eps}(x_0)$ and $B_{r/2+\eps}(x_1)$ are disjoint.
It turns out that it is $1$-Lipschitz
by (\ref{eq22}). On the other hand, $\psi(x_0)-\psi(x_1)=r+\eps>{\mathsf
d}_\cE
(x_0,x_1)$.
\end{pf}

We now examine some additional properties of Energy measure spaces.

%
\begin{proposition}
\label{propGuppergradient}
Let $(X,\tau,\mm,\cE)$ be an Energy measure space.
Let $f\in\cG\cap\rmC_b(X)$ and let $\zeta\dvtx X\to[0,\infty)$ be
a bounded upper semicontinuous function such that
$\Gamma(f)\le\zeta^2$ $\mm$-a.e. in $X$.
Then $f$ is Lipschitz (with respect to the induced
distance ${\mathsf d}_\cE$) and
$\llvert \rmD^* f\rrvert \le\zeta$. In particular, $\zeta$ is an upper gradient
of~$f$.
\end{proposition}

\begin{pf} We know that (\hyperref[ED]{\ED}) holds {with
${\mathsf d}={\mathsf d}_{\cE}$}, by the previous theorem.
Since $\zeta$ is bounded, $f$ is Lipschitz by (\ref{EDa}).
We fix $x\in X$ and
for every $\eps>0$ we set
$G_{\eps}:=\sup_{B_{3\eps}(x)}\zeta$.
The
Lipschitz function
\[
\psi_\eps(y):= \bigl[\bigl\llvert f(y)-f(x)\bigr\rrvert
\lor \bigl(G_{\eps}{\mathsf d}(y,x) \bigr) \bigr]\land
\bigl(G_{\eps}\rmS_\eps\bigl({\mathsf d}(x,y)\bigr) \bigr)
\]
belongs to $\cV_{\infty}$ {
and $
\sqrt{\Gamma(\psi)}
\leq\max\{\zeta,G_\eps\}$, since $\llvert \rmS_\eps'\rrvert \leq1$; moreover,
$\psi_\eps(y)=0$ if ${\mathsf d}(y,x)\ge3\eps$,}
so that $
\sqrt{\Gamma(\psi_\eps)}
\le G_{\eps}$ $\mm$-a.e. in $X$. It follows
that $\psi_\eps$ is $G_\eps$-Lipschitz
and $\psi_\eps(y)\le G_{\eps} {\mathsf d}(y,x)$ for every $y\in X$ since
$\psi_\eps(x)=0$, so that
{$G_{\eps}\rmS_\eps({\mathsf d}(x,y))=G_\eps\eps$ for ${\mathsf
d}(x,y)<\eps$ gives}
\[
\bigl\llvert \rmD f(x)\bigr\rrvert \le\limsup_{y\to x}
\frac{\llvert f(y)-f(x)\rrvert }{{\mathsf d}(y,x)} \le \limsup_{y\to x} \frac{\psi_\eps(y)}{{\mathsf d}(y,x)} \le
G_{\eps}.
\]
Since $\eps>0$ is arbitrary and $\lim_{\eps\down0}G_\eps=\zeta(x)$
we obtain $\llvert \rmD f(x)\rrvert \le\zeta(x)$.
Since $\zeta$ is upper semicontinuous and $X$ is a length space,
we also get $\llvert \rmD^* f\rrvert \le\zeta$.
\end{pf}

The following result provides a first inequality between $\cE$ and $\C
$, in the case when
a priori the distances ${\mathsf d}$ and ${\mathsf d}_\cE$ are
different, and we
assume only (\ref{EDb}).

%
%
\begin{theorem}
\label{thmslope-bound}
$\cE$ be a Dirichlet form in $L^2(X,\mm)$ satisfying
\eqref{eq50} of Section~\ref{subsecDiFo}
and let ${\mathsf d}$ be a distance on $X$ satisfying condition (\hyperref[MD]{\MD}).
Then condition (\ref{EDb}) is satisfied if and only if
for every function $f\in\Lip(X,\mathsf{d})$ with bounded support
we have
%
\begin{equation}
\label{eq34} f\in\cG,\qquad\llvert \rmD f\rrvert ^2\ge\Gamma(f)
\qquad\mbox{$\mm$-a.e. in $X$.}
\end{equation}
In addition, if (\hyperref[MD]{\MD}) and (\ref{EDb}) hold, we have
%
\begin{eqnarray}
\label{eq112} &\displaystyle2\C(g)\ge\cE(g)\qquad\mbox{for every }g\in
L^2(X,\mm ),&
\\
\label{eq112bis} &\displaystyle D(\C)\subset \cG\subset\cV,\qquad\llvert \rmD g
\rrvert _w^2\ge\Gamma(g)\qquad\mbox{for every }g\in D(
\C).&
\end{eqnarray}
\end{theorem}

\begin{pf}
The implication \eqref{eq34}~$\Rightarrow$ (\ref{EDb}) is trivial;
let us consider the converse one.

Since the statement is local, possibly replacing $f$ with
$ 0\lor
(f+c)\land\rmS_k({\mathsf d}(x_0,\cdot))$ with $c=0\lor\sup_{B_{3k}(x_0)}(-f)$
(notice that $f$ is bounded) and $k$ large enough, we can assume that
$f$ is nonnegative and $f\leq\rmS_k(d(x_0,\cdot))$.

Recall the Hopf--Lax formula \eqref{eq11} for the map
$Q_t$; if $(z_i)$ is a countable dense subset of $X$ we define
%
\begin{eqnarray}
\label{eq37} Q_t^n f(x)&=& \biggl(\min
_{1\le i\le n} f(z_i)+\frac{1}{2t} {\mathsf
d}^2(z_i,x) \biggr),
\nonumber
\\[-8pt]
\\[-8pt]
Q_t^{n,k} f(x)&=&Q_t^nf(x)\land{2}
\rmS_k\bigl({\mathsf d}(x_0,x)\bigr),
\nonumber
\end{eqnarray}
and we set $I_n(x)= \{i\in\{1,\ldots,n\}\dvtx  z_i\mbox{ minimizes
\eqref{eq37}} \}$. By the density of $(z_n)$, it is clear
that $Q_t^{n,k} f\downarrow Q_t f\land2\rmS_k({\mathsf d}(x_0,\cdot))=Q_t
f$, because $Q_tf\leq f\leq\rmS_k({\mathsf d}(x_0,\cdot))$.
Therefore, if $z_n(x)\in \{z_i\dvtx i\in I_n(x)\}$, it turns out that $(z_n(x))$ is a
minimizing sequence for $Q_tf(x)$, namely
\[
\frac{1}{2t}{\mathsf d}^2\bigl(x,z_n(x)\bigr)+f
\bigl(z_n(x)\bigr)\to Q_t f(x)\qquad\mbox{as }n\to\infty.
\]
The very definition of $\sfD^{+}(t,x)$ then gives
%
\begin{equation}
\label{eqaop1} \limsup_{n\to\infty}\frac{1}t{\mathsf d}
\bigl(x,z_n(x)\bigr)\le\sfD^{+}(t,x).
\end{equation}
The locality property, the fact that $ (f(z_i)+{\mathsf
d}^2(z_i,\cdot
)/2t )\land
\rmS_k({\mathsf d}(x_0,\cdot))\in\cV$
and the obvious bound
\[
{\mathsf d}^2(z_i,x)\le4 k t\qquad\mbox{if } i\in
I_n(x), x\in\bigl\{Q^{n}_t f\leq2
\rmS_k\bigl(d(x_0,\cdot)\bigr)\bigr\},
\]
yield
\[
\sqrt{\Gamma\bigl(Q_t^{n,k} f\bigr) (x)}\le
\frac{1}t \max_{i\in I_n(x)}{\mathsf d}(x,z_i)
\le2\sqrt{\frac{k}t} %
\]
for $\mm$-a.e. {$x\in\{Q^{n}_t f\leq2\rmS_k(d(x_0,\cdot))$\}}.
If we define $z_n(x)$ as a value $z_j$ that realizes the maximum
for {${\mathsf d}(z_i,x)$} as $i\in I_n(x)$, the previous formula yields
%
\begin{eqnarray}
\label{eq24b} \sqrt{\Gamma\bigl(Q_t^{n,k} f\bigr)
(x)}\leq\frac{1}t {\mathsf d}\bigl(x,z_n(x)\bigr) \le2
\sqrt{\frac{k}t}
\nonumber
\\[-8pt]
\\[-8pt]
\eqntext{\mbox{for }\mm\mbox{-a.e. }x\in\bigl\{Q^n_t f\leq2
\rmS_k\bigl(d(x_0,\cdot)\bigr)\bigr\},}
\end{eqnarray}
so that
\begin{eqnarray*}
\Gamma\bigl(Q_t^{n,k} f\bigr)&\leq& 4 \biggl(1
\lor\frac{k}t \biggr) \qquad \mbox{$\mm$-a.e. in $X$},
\\
\Gamma\bigl(Q_t^{n,k} f\bigr)&=&0\qquad \mbox{$\mm$-a.e. in
$X\setminus B_{3k}(x_0)$}.
\end{eqnarray*}
Since $Q_t^{n,k} f$
and $\Gamma(Q_t^{n,k} f)$ are
uniformly bounded and supported in a bounded set,
and $Q_t^{n,k} f$
pointwise converges to
$Q_t f$ as $n\to\infty$, considering any weak limit point $G$ of
$\sqrt{\Gamma(Q_t^{n,k} f)}$ in $L^2(X,\mm)$ we obtain
by \eqref{eq20}, \eqref{eqaop1} and \eqref{eq24b}
%
\begin{eqnarray}
\label{eqaop2} \Gamma(Q_t f) (x)\le G^2(x)\le
\frac{ (\sfD^{+}(x,t) )^2}{t^2}
\nonumber
\\[-8pt]
\\[-8pt]
\eqntext{\mbox{for $\mm$-a.e. $x\in\bigl\{Q_tf<2\rmS_k
\bigl({\mathsf d}(x_0,\cdot )\bigr)\bigr\}$.}}
\end{eqnarray}
{Since, by the inequalities
$Q_t f\leq f\leq\rmS_k({\mathsf d}(x_0,\cdot))$, $Q_t f$
vanishes on $\{Q_tf\geq2\rmS_k({\mathsf d}(x_0,\cdot))\}$,
the inequality above holds $\mm$-a.e. in $X$.}

Since $f$ is Lipschitz, it follows that $\sfD^+(x,t)/t$ is uniformly
bounded. Integrating \eqref{eq25} on an arbitrary bounded Borel set
$A$ and
applying Fatou's lemma, from \eqref{eqaop2} we get
\begin{eqnarray*}
\int_A \llvert \rmD f\rrvert ^2\,\d\mm&\ge&
\int_A \limsup_{t\down0} \int
_0^1 \biggl(\frac{\sfD^+(x,tr)}{tr}
\biggr)^2\,\d r\,\d\mm(x)
\\
&\ge& \limsup_{t\downarrow0} \int_0^1
\int_A \biggl(\frac{\sfD^+(x,tr)}{tr} \biggr)^2 \,\d
\mm(x)\,\d r
\\
&\ge& \limsup_{t\downarrow0} \int_0^1
\int_A \Gamma(Q_{tr} f) (x) \,\d\mm(x)\,\d r
\\
&\ge& \int_0^1 \liminf_{t\downarrow0}
\biggl(\int_A \Gamma(Q_{tr} f)\,\d\mm \biggr)\,
\d r \ge\int_A \Gamma(f)\,\d\mm,
\end{eqnarray*}
where in the last inequality we applied
\eqref{eq20} once more. Since $A$ is arbitrary we conclude.

In order to prove \eqref{eq112},
it is not restrictive to assume $\C(g)<\infty$. By the very
definition of the Cheeger energy, we can then find
a sequence of functions $f_n\in\Lip_b(X)\cap L^2(X,\mm)$ converging
to $g$ in
$L^2(X,\mm)$ with $\lim_{n\to\infty}\int_X |\rmD
f_n|^2\,\d\mm=2\C(g)$.

By replacing $f_n$ with $g_n (x)=f_n(x)\rmS_1({\mathsf d}(x,x_0)/n)$,
we can even obtain a sequence of Lipschitz functions with bounded
support. \eqref{eq34} and the lower semicontinuity of $\cE$ then provide
\eqref{eq112} and \eqref{eq112bis}.
\end{pf}

In order to conclude our analysis of the relations
between $\cE$ and $\C$ for Energy measure spaces $(X,\tau,\mm,\cE)$,
we introduce a further property.

%
\begin{definition}[(Upper regularity)]
\label{defcE-reg}
Let $(X,\tau,\mm,\cE)$ be an Energy measure space.
We say that the Dirichlet form $\cE$ is upper-regular if
for every $f$ in a dense subset of $\cV$ there exist
$f_n\in\cG\cap\rmC_b(X)$
converging strongly to $f$ in $L^2(X,\mm)$
and $g_n\dvtx X\to\R$ bounded and upper semicontinuous such that
%
\begin{equation}
\label{eq113} \sqrt{\Gamma(f_n)}\le g_n\qquad  \mm
\mbox{-a.e.},\qquad 
\limsup_{n\to\infty}\int
_X g_n^2\,\d\mm\le\cE(f).
\end{equation}
\end{definition}

%
\begin{theorem}
\label{thmcE=C}
Let $(X,\tau,\mm,\cE)$ be an Energy measure space.
Then the Cheeger energy associated to $(X,{\mathsf d}_\cE,\mm)$
coincides with
$\cE$, that is,
%
\begin{equation}
\label{eq39} \cE(f)=2\C(f)\qquad\mbox{for every }f\in L^2(X,\mm),
\end{equation}
if and only if $\cE$ is upper-regular.
In this case $\cG=\cV$, $\cE$ admits a Carr\'e du Champ
$\Gamma$ and
%
\begin{equation}
\label{eq26} \Gamma(f)=\llvert \rmD f\rrvert _w^2
\qquad\mbox{$\mm$-a.e. in $X$ for every }f\in\cV.
\end{equation}
In particular, the space $\cV\cap\Lip_b(X,\mathsf{d}_\cE)$ is dense in $\cV$.
If moreover \eqref{eq80} holds, then $(\sP t)_{t\ge0}$ satisfies
the mass preserving property \eqref{eq4}.
\end{theorem}

\begin{pf}
Since $\C$ is always upper-regular by \eqref{eq105},
the condition is clearly necessary.
In order to prove its sufficiency, by \eqref{eq112} of
Theorem~\ref{thmslope-bound} we have just to prove
that every $f\in\cV$ satisfies
the inequality $2\C(f)\le\cE(f)$.
If $f_n,g_n$ are sequences as in \eqref{eq113},
Proposition~\ref{propGuppergradient} yields that
$f_n$ are Lipschitz and
\[
\llvert \rmD f_n\rrvert \le g_n,\qquad
\C(f_n)\le\frac{1}2\int_X \llvert \rmD
f_n\rrvert ^2\,\d\mm\le \frac{1}2\int
_X g_n^2\,\d\mm.
\]
Passing to the limit as $n\to\infty$, we obtain the
desired inequality thanks to the lower-semicontinuity of $\C$ in
$L^2(X,\mm)$. The last statement of the Theorem follows
by \cite{AGS11a}, Theorem~4.20.
\end{pf}

\subsection{Riemannian Energy measure spaces and the \texorpdfstring{$\mathrm{BE}(K,\infty)$}{$BE(K,infty)$} condition}
\label{subsecDir-BE}

In this section, we will discuss
various consequences of the Energy measure space axiomatization in
combination with $\operatorname{BE}(K,\infty)$.

Taking into account the previous section, the Bakry--\'Emery condition
$\BE KN $ as stated in
Definition~\ref{defBEKN} makes perfectly sense for
a Energy measure space $(X,\tau,\mm,\cE)$.
In the next result, we will show that under a weak-Feller property
on the semigroup $(\sP t)_{t\ge0}$
we gain upper-regularity of $\cE$,
the identifications $\cE=2\C$ of Theorem~\ref{thmcE=C} and
$\cL=\cL_\rmC$.

%
\begin{theorem}
\label{thmFeller=Riemannian}
Let $(X,\tau,\mm,\cE)$ be a Energy measure space
satisfying the $\BE K\infty$ condition.
Then its Markov semigroup $(\sP t)_{t\ge0}$ satisfies the
weak-Feller property
\renewcommand{\theequation}{w-Feller}%
\begin{eqnarray}
\label{eqwFeller} &&f\in\Lip_b(X,\mathsf{d}_\cE) \mbox{ with bounded
support}, \qquad  |\rmD f|\leq1\nonumber
\\[-8pt]
\\[-8pt]
&&\qquad \Rightarrow\quad  \sP t f\in\rmC_b(X) \qquad \forall t
\geq0
\nonumber
\end{eqnarray}
\renewcommand{\theequation}{\arabic{section}.\arabic{equation}}%
\setcounter{equation}{41}%
if and only if
%
\begin{equation}
\label{eq114} \cL=\cL_\rmC,
\end{equation}
that is, if every function $f\in\cL$ admits a continuous
representative.
In this case, $\cE$ is upper-regular and, as a consequence,
\eqref{eq39}, \eqref{eq26} hold.
\end{theorem}

\begin{pf}
The implication \eqref{eq114}~$\Rightarrow$ \eqref{eqwFeller} is easy,
since {for any $f\in\Lip_b(X,{\mathsf d})$ with $\llvert \rmD f\rrvert \leq1$ and
bounded support},
the Bakry--\'Emery condition $\BE K\infty$ [i.e., \eqref{eqBEKnu.6}
with $\nu=0$] and the bound $\Gamma(f)\le1$
given by (\ref{EDb})
yields $\rme^{Kt}\sP t f\in\cL=\cL_\rmC$.

Now we prove the converse implication, from \eqref{eqwFeller} to
\eqref{eq114}.
The Bakry--\'Emery condition $\BE K\infty$ in conjunction with
(\ref{EDb}) and \eqref{eqwFeller} yield
$\rme^{Kt}\sP t f\in\cL_\rmC$ for every
$f\in\Lip_b(X,{\mathsf d})$ with $\llvert \rmD f\rrvert \leq1$ and bounded support
and $t>0$,
thus in particular $\rme^{Kt}\sP t f$ is $1$-Lipschitz by (\ref{EDa}).
Let us now fix $f\in\cL\cap L^\infty(X,\mm)$
and let us consider a sequence of uniformly bounded functions
{$f_n\in\Lip_b(X)\cap L^2(X,\mm)$
with bounded support} converging to $f$ in $L^2(X,\mm)$.
By the previous step, we know that $\sP t f_n\in\Lip_b(X)$
and the estimate \eqref{eqBEKnu.4}
shows that $\Gamma(\sP t f_n)\le C/t$ for a constant $C$ independent
of $n$. (\hyperref[ED]{\ED}) then shows that
$\Lip(\sP t f_n)\le C/t$; passing to the limit as $n\to\infty$,
we can find a subsequence $n_k\to\infty$
such that $\lim_k \sP{t} f_{n_k}(x)=\sP t f(x)$
for every $x\in X\setminus\cN$, with $\mm(\cN)=0$.
Since
$ \sP{t}f_n$ are uniformly Lipschitz functions,
also $\sP t f$ is Lipschitz in $X\setminus\cN$ so that
it admits a Lipschitz representative
$\tilde f_t$ in $X$.

On the other hand, $\BE K\infty$ and
(\ref{EDa}) show that $\Lip(f_t)\le\rme^{-Kt}$.
Passing to the limit along a suitable sequence $t_n\downarrow0$
and repeating the previous argument we obtain
that $f$ admits a Lipschitz representative.

Let us prove now that $\cE$ is upper regular, by
checking \eqref{eq113} for every $f$ in the space $\cVu$ of \eqref{eq67},
which is dense in $\cV$.
Observe that the estimate
\eqref{eqBEKnu.4}, \eqref{eq114} and (\ref{EDa}) yield
that, for every $t>0$ and every
$g\in L^2\cap L^\infty(X,\mm)$, the function
$\sP t g$ admits a Lipschitz, thus continuous, and bounded representative
$g_t$. Choosing in particular $g:=\sqrt{\Gamma(f)}$, we obtain
by \eqref{eqBEKnu.6}
\begin{eqnarray*}
\Gamma(\sP t f)&\le&\rme^{-2Kt} g_t^2,\qquad \sP
t f\to f \mbox{ in }L^2(X,\mm),
\\
\lim_{t\down0}\int_X \rme^{-2Kt}
g_t^2\,\d\mm&=& \int_X
g^2\,\d\mm=\cE(f).
\end{eqnarray*}
\upqed
\end{pf}

According to the previous theorem we introduce the
natural, and smaller, class of Energy measure spaces
$(X,\tau,\mm,\cE)$, still with no curvature bound, but well adapted
to the Bakry--\'Emery condition. In such a class, that we call
\emph{Riemannian Energy measure spaces},
the Dirichlet form $\cE$ coincides with the Cheeger energy $\C$
associated to the
intrinsic distance ${\mathsf d}_\cE$ and every function in $\cL$
admits a
continuous (thus $1$-Lipschitz, by the
Energy measure space axiomatization) representative.

%
\begin{definition}[(Riemannian Energy measure spaces)]
$(X,\tau,\mm,\cE)$ is a Riemannian Energy measure space if the
following properties hold:
\begin{longlist}[(b)]
\item[(a)] $(X,\tau,\mm,\cE)$ is a Energy measure space;
\item[(b)] $\cE$ is upper regular according to Definition~\ref{defcE-reg};
\item[(c)] every function in $\cL$ admits a continuous representative.
\end{longlist}
\end{definition}

The next theorem presents various equivalent characterizations of
Riemannian Energy measure spaces
in connection with $\BE K\infty$.

%
%
\begin{theorem}
\label{thmequivalence}
The following conditions are equivalent:%
{\renewcommand\theenumi{\roman{enumi}}\renewcommand\labelenumi{(\theenumi)}%
\begin{longlist}[(iii)]
\item\label{itemi}
$(X,\tau,\mm,\cE)$ is a Riemannian Energy measure space satisfying
$\BE K\infty$.
\item\label{itemibis}
$(X,\tau,\mm,\cE)$ is a Energy measure space satisfying
\eqref{eqwFeller} and\break $\BE K\infty$.
\item\label{itemitris}
$(X,\tau,\mm,\cE)$ is a Energy measure space satisfying
$\cL=\cL_\rmC$ and
$\BE K\infty$.
\item\label{itemii}
$(X,\tau,\mm,\cE)$ is a Energy measure space with
$\cE$ upper regular,
and for every function $f\in L^2(X,\mm)\cap\Lip_b(X,\mathsf{d}_\cE)$
with $\llvert \rmD f\rrvert \in L^2(X,\mm)$
%
\begin{equation}
\label{eq43}\quad  \sP t f\in\Lip(X,\mathsf{d}_\cE),\qquad \llvert \rmD\sP t f\rrvert
^2\le\rme^{-2Kt}\sP t \bigl( \llvert \rmD f\rrvert
^2 \bigr)\qquad \mm\mbox{-a.e. in $X$}.
\end{equation}
\item\label{itemiii}
$\cE$ is a Dirichlet form in $L^2(X,\mm)$ as in \eqref{eq50},
there exists a length distance $\mathsf d$ on $X$ inducing the topology $\tau$
and satisfying conditions (\hyperref[MD]{\MD}), (\ref{EDb}),
and for every $f\in\cL_\rmC\cap L^\infty(X)$ and $t>0$
%
\begin{equation}
\label{eq44} \qquad\sP t f\in\Lip_b(X,\mathsf{d}),\qquad \llvert \rmD\sP t f
\rrvert ^2\le\rme^{-2Kt}\sP t \Gamma(f) \qquad \mm\mbox{-a.e.
in $X$}.
\end{equation}
\end{longlist}
}%
If one of the above equivalent conditions holds
with \eqref{eq80}, then
\eqref{eqconvpuntLip1} holds {with $C(t)=\rme^{-Kt}$},
the semigroups $(\tsP{t})_{t\ge0}$ and $(\sH{t})_{t\ge0}$
are well defined according to
Proposition~\ref{propprecise-rep},
$\sH{t}(\mu)\ll\mm$ for every $t>0$ and $\mu\in\prob X$,
and the strong Feller property
\renewcommand{\theequation}{S-Feller}%
\begin{equation}
\label{eqFeller} \mbox{$\tsP{t}$ maps $L^2\cap L^\infty(X,
\mm)$ into $\Lip_b(X,\mathsf{d}_\cE)$}
\end{equation}
\renewcommand{\theequation}{\arabic{section}.\arabic{equation}}%
\setcounter{equation}{44}%
holds with
%
\begin{equation}
\label{eqG0} \llvert \rmD\tsP{t}f\rrvert ^2= \Gamma(\sP t f)
\qquad\mm\mbox{-a.e. in $X$} \mbox{ for every }t>0, f\in L^2\cap
L^\infty(X,\mm).\hspace*{-35pt}
\end{equation}
Eventually, defining $\rmI_{2K}(t)$ as in \eqref{eq57}, there holds
%
\begin{equation}
\label{eqGLip} 2\rmI_{2K}(t) \llvert \rmD\tsP{t} f\rrvert
^2\le\tsP{t} {f^2}\qquad\mbox{for every }t\in (0,\infty),
f\in L^\infty(X,\mm),\hspace*{-5pt}
\end{equation}
and in particular
%
\begin{equation}
\label{eqGLip2} \sqrt{2\rmI_{2K}(t)}\Lip(\heat t f)\le\llVert f\rrVert
_{L^\infty(X,\mm
)}\qquad \mbox{for every }t\in(0,\infty).
\end{equation}
\end{theorem}

\begin{pf}
The equivalence
(\ref{itemi})~$\Leftrightarrow$ (\ref{itemibis})~$\Leftrightarrow
$ (\ref{itemitris})
is just the statement of Theorem~\ref{thmFeller=Riemannian}.

(\ref{itemiii}) $\Rightarrow$
(\ref{itemibis}) and (\ref{itemiii})~$\Rightarrow$
(\ref{itemii}).
Thanks to (\ref{EDb}),
we immediately get \eqref{eqwFeller}. Equation \eqref{eq44} also yields
the density of $\cV_{\infty}$ in $\cV$ and, thanks to
\eqref{eq34}, condition
\eqref{eqBEKnu.6} for $\BE K\infty$. Since $(\sP t)_{t\ge0}$ is
order preserving,
\eqref{eq34} and (\ref{itemiii}) yield (\ref{EDa}): if $f\in
\cL_\rmC$ then \eqref{eq44}
and the length property yield $\sP t f$ Lipschitz with constant less then $\rme^{-K t}$.
Since along a suitable vanishing sequence
$t_n\downarrow0$ $\sP{t_n} f\to f$ $\mm$-a.e. as $n\to\infty$,
arguing as in the proof of Theorem~\ref{thmFeller=Riemannian} it is easy
to check that $f$ is $1$-Lipschitz.
We can thus apply Theorem~\ref{thmequivalence}
to get that $(X,\tau,\mm,\cE)$ is an Energy measure space
with ${\mathsf d}\equiv{\mathsf d}_\cE$.
Since (\ref{itemibis}) in particular shows
that $\cE$ is upper-regular,
we proved that (\ref{itemiii})~$\Rightarrow$ (\ref{itemii}) as well.

(\ref{itemii}) $\Rightarrow$ (\ref{itemibis}). By the density of
$\cV\cap\Lip_b(X,\mathsf{d}_\cE)$ in $D(\C)=\cV$ stated in
Theorem~\ref{thmcE=C} and the upper bound \eqref{eq34} we get
\eqref{eqBEKnu.6} which is one of the equivalent characterizations
of $\BE K\infty$. Moreover, \eqref{eq43} clearly yields \eqref{eqwFeller}.

(\ref{itemi}) $\Rightarrow$ (\ref{itemiii}) with the choice $\mathsf d:=\mathsf d_\cE$ and \eqref{eqGLip}.
Let us observe that the estimate
\eqref{eqBEKnu.4} and the property $\cL\subset\Lip(X)$ yield that
for every $t>0$ and every function $f\in L^2\cap L^\infty(X,\mm)$
$\sP t f$ admits a Lipschitz representative satisfying \eqref{eqGLip2}.
Moreover, if $f$ is also Lipschitz, \eqref{eqBEKnu.6} yields
the estimate \eqref{eqconvpuntLip1} with $C(t):=\rme^{-K t}$.
We can then apply proposition \ref{propprecise-rep}
and conclude that when $f\in\rmC_b(X)\cap L^2(X,\mm)$
the Lipschitz representative of $\sP t f$ coincides with
$\tsP{t} f$. Since by definition
\[
\tsP{t} f(x)=\int_X f\,\d\sfH_t
\delta_x\qquad\mbox{for all Borel $f$ bounded from below}
\]
we can use a monotone class argument to prove the identification of
$\tsP{t}$
with the continuous version of $\sP t$
in the general case of bounded, Borel and square integrable functions.
Notice that we use \eqref{eqGLip2} to convert monotone equi-bounded
convergence of $f_n$ into pointwise convergence on $X$ of
(the continuous representative of) $\sP t
f_n$, $t>0$.

Another immediate application of \eqref{eqGLip2}
is the absolute continuity
of {$\sH{t}\mu$}
w.r.t. $\mm$ for all $\mu\in{\mathscr P}(X)$ and $t>0$.
Indeed, if $A$ is a Borel and $\mm$-negligible set, then $\tsP{t}\chi_A$
is identically null
(being equal to $\sP t\chi_A$, hence continuous, and null $\mm$-a.e.
in $X$),
hence \eqref{eqdual} gives {$\sH{t}\mu(A)=0$}. As a consequence, we
can also compute
$\tsP{t} f(x)$ for $\mm$-measurable functions $f$,
provided $f$ is semi-integrable with respect
to $\sH{t}\delta_x$.
If now $f\in\cL$, \eqref{eqBEKnu.6} then yields
\[
\Gamma(\sP t f)\le\rme^{-2Kt}\tsP{t}\Gamma(f)\qquad\mbox{$
\mm $-a.e. in $X$},
\]
and Proposition~\ref{propGuppergradient} yields \eqref{eq44} since
$\tsP{t}\Gamma(f)$ is continuous and bounded.
A~similar argument
shows \eqref{eqGLip}, starting from \eqref{eqBEKnu.4}.

Let us eventually prove \eqref{eqG0}.
Since the inequality $\geq$ is true by assumption, let us see why
\eqref{eq44} provides the converse one:
we start from
\[
\llvert \rmD\tsP{t} f\rrvert ^2\le\rme^{-2K\eps}\tsP{\eps}
\bigl(\Gamma(\sP {t-\eps} f) \bigr) \qquad\mbox{for every }\eps\in(0,t).
\]
Recalling that $\Gamma(\heat{t-\eps} f)$ converges strongly in
$L^2(X,\mm)$ to $\Gamma(\heat{t}f)$
as $\eps\downarrow0$, we get \eqref{eqG0}.
\end{pf}

Recalling Theorem~\ref{thmequivalence},
Theorem~\ref{thmKuwadaequivalence},
the characterization
\eqref{eq43}, {and the notation \eqref{eq16}}, we immediately have

%
\begin{corollary}
\label{corBE=contraction}
Let $(X,\tau,\mm,\cE)$ be a
Energy measure space satisfying
the upper-regularity property \eqref{eq113}
(in particular a Riemannian one) and \eqref{eq80}.

Then $\BE K\infty$ holds if and only if the semigroup
$(\sP t)_{t\ge0}$ satisfies the contraction property
%
\begin{equation}
\label{eq56} W_2 \bigl((\sP t f)\mm,(\sP t g)\mm \bigr)\le
\rme^{-K t} W_2(f\mm,g\mm)
\end{equation}
{for every probability densities $f, g\in L^1_+(X,\mm)$.}
\end{corollary}

\section{Proof of the equivalence result}
\label{secproof}

In this section, we assume
that\break $(X,\tau,\mm,\allowbreak \cE)$ is a Riemannian Energy measure space
satisfying \eqref{eq80} (relative to ${\mathsf d}_\cE$) and the $\BE
K\infty$ condition, as
discussed in Section~\ref{subsecDir-BE}.
In particular, all results of the previous sections on existence of the dual
semigroup $(\sH{t})_{t\ge0}$, its $W_2$-contractivity
and regularizing properties of $(\sP t)_{t\ge0}$ are applicable.
Furthermore, by Theorem~\ref{thmcE=C}, the Dirichlet form setup
described in Section~\ref{secMarkov} and the metric
setup described in Section~\ref{subsecpreliminaries} are completely
equivalent. In particular, we can apply the results of \cite{AGS11a}.

\subsection{Entropy, Fisher information and moment estimates}
\label{subsecstep1}
Let us first recall that
the \emph{Fisher information functional}
$\mathsf F\dvtx L^1_+(X,\mm)\to[0,\infty]$ is defined by
\[
\mathsf F(f):=\otto\cE(\sqrt{f}), \qquad\sqrt{f}\in\cV,
\]
set equal to $+\infty$ if $\sqrt{f}\in L^2(X,\mm)\setminus\cV$.
Since $f_n\to f$ in $L^1_+(X,\mm)$ implies
$\sqrt{f_n}\to\sqrt{f}$ in $L^2(X,\mm)$, $\mathsf F$ is $L^1$-lower
semicontinuous.

%
\begin{proposition}\label{propFisher}
Let $f\in L^1_+(X,\mm)$. Then $\sqrt{f}\in\cV$ if and only if
$f_N:=\min\{f,N\}\in\cV$ for all $N$
and $\int_{{\{f>0\}}}\Gamma(f)/{f}\,\d\mm<\infty$. If this is the case,
\[
\mathsf{F}(f)=\int_{\{f>0\}}\frac{\Gamma(f)}{f}\,\d
\mm.
\]
In addition, $\mathsf F$ is convex in $L^1_+(X,\mm)$.
\end{proposition}

We refer to \cite{AGS11a}, Lemma~4.10, for the \emph{proof},
{observing only that $\Gamma(f)$ is well defined $\mm$-a.e. in
the class of functions satisfying $f_N:=\min\{f,N\}\in\cV$ for all
$N$ thanks to the locality property \eqref{eq74}}.
By applying the results of \cite{AGS11a},
we can prove that $(\sH{t})_{t\ge0}$ is a continuous semigroup in
$\probt X$ and we can
calculate the dissipation rate of the entropy functional
along it. Some of the results below are very simple
in the case $\mm(X)<\infty$, {where the function $V$ in \eqref{eq8} reduces
to a constant.}


%
\begin{lemma}[(Estimates on moments, Fisher information, Entropy, metric
derivative)]\label{leendis}
If $f\in L^2(X,\mm)$ is a probability density, for $\mu_t=\sP t f \mm
$ it holds
%
\begin{equation}
\label{eq63} \llvert \dot\mu_t\rrvert ^2\le\mathsf F(
\sP t f)\qquad\mbox{for $\Leb{1}$-a.e. $t>0$.}
\end{equation}
If $f\in L^1(X,\mm)$ is a probability density, for every $T>0$ there
exists $C_T>0$ such that
%
\begin{eqnarray}
\label{eq66}\qquad  \int_0^T \mathsf F( \sP s f)
\,\d s+\int_0^T \int_X
V^2 \sP sf\,\d \mm\,\d s &\le& C_T \biggl(
\operatorname{Ent}_{\tilde\mm}(f\mm)+\int_XV^2f
\,\d\mm \biggr),
\\
\label{eqextraextra} \entv(\sP tf)&\leq&\entv(f\mm)\qquad\forall t\geq0.
\end{eqnarray}
Finally, for every $\bar\mu\in\probt X$ the map $t\mapsto\mu
_t:=\sfH_t\bar\mu$
is a continuous curve in $\probt X$ with respect to $W_2$.
\end{lemma}

\begin{pf} The estimate \eqref{eq63} follows by \cite{AGS11a}, Lemma~6.1,
which can be applied here since the Dirichlet form $\cE$ coincides
with the Cheeger energy (Theorem~\ref{thmcE=C}).
The estimate \eqref{eq66} follows \cite{AGS11a}, Theorem~4.20,
thanks to the integrability condition \eqref{eq8}, when $f\in
L^2(X,\mm)$. In the general
case, it can be recovered by a truncation argument, using the lower
semicontinuity of
$\mathsf F$. The estimate \eqref{eqextraextra} can be obtained by a
similar approximation
argument from the detailed energy dissipation computation made in \cite
{AGS11a}, Theorem~4.16(b)
[the more general underlying fact is that,
independently of curvature assumptions,
$L^2$ gradient flows are contained in $W_2$ gradient flows of the Entropy
in the sense (weaker than $\EVI$) of energy dissipation].

Concerning the continuity of the map $t\mapsto\sfH_t\bar\mu$ with
respect to $W_2$ for every $\bar\mu\in\probt X$, it is a standard consequence
of contractivity and existence of a dense set of initial conditions
[namely, {thanks to \eqref{eq63} and \eqref{eq66}},
the set $D(\entv):=\{\nu\in\probt X\dvtx  \operatorname{Ent}_{\mm}(\nu
)<\infty\}$] for which
continuity holds up to $t=0$.
\end{pf}

\textit{Integration by parts for probability densities.}
We shall see now that
assuming the Bakry--\'Emery $\BE K\infty$ condition,
integration by parts formulae for the ${\DeltaE^{(1)}}$ operator
can be extended to
probability densities with finite Fisher information,
provided that the set of test functions $\varphi$ is restricted to
the spaces
$\cVu, \cVd$ defined in \eqref{eq67}.
Recall that $\cVu, \cVd$ are dense in $\cV$ w.r.t. the strong
topology and that
\[
\Gamma(\varphi_\eps-\varphi)\weaksto0\qquad\mbox{in
$L^\infty(X,\mm)$ as $\eps\down0$ for every }\varphi \in\cVu,
\]
where $\varphi_\eps$ are defined as in \eqref{eqGG4}, since
$\Gamma(\varphi_\eps-\varphi)$ is uniformly bounded and converges to
$0$ in $L^1(X,\mm)$
{(by the strong convergence of $\varphi_\eps$ to $\varphi$ in $\cV$)}.

In the sequel, we introduce an extension of the bilinear form
$\Gamma(f,g)$, denoted $\tilde\Gamma(f,g)$, which is particularly
appropriate to deal with probability densities $f$ with finite Fisher
information and test functions $\varphi\in\cVu$.

%
\begin{definition}[{[Extension of $\Gamma(f,\varphi)$]}]
Let $f= g^2\in L^1_+(X,\mm)$ with $\mathsf{F}(f)=4\cE(g)<\infty$ and
{$g\geq0$}.
For all $\varphi\in\cVu$ we define
%
\begin{equation}
\label{eq69} \tilde\Gamma(f,\varphi) :=2g\Gamma(g,\varphi).
\end{equation}
\end{definition}

The definition is well posed, consistent with the case when $f\in\cV$
and it holds
%
\begin{equation}
\label{eq69bis} \tilde\Gamma(f,\varphi)=\lim_{N\to\infty}
\Gamma(f_N,\varphi) \qquad\mbox{in $L^1(X,\mm)$.}
\end{equation}
Indeed, thanks to \eqref{eqGchain} and to the fact that if $\sfF
(f)<\infty$ then
$f_N=(g_N)^2\in\cV$, where $g_N:=g\land\sqrt N$;
it follows that $\Gamma(f_N,\varphi)=2g\nchi_N\Gamma(g,\varphi)$,
$\nchi_N$ being the characteristic function of the set $\{f< N\}$
and
\begin{eqnarray*}
\int_X\bigl\llvert \tilde\Gamma(f,\varphi)-
\Gamma(f_N,\varphi)\bigr\rrvert \, \d \mm &=&2\!\!\int
_X \llvert 1-\nchi_N\rrvert g\Gamma(g,\varphi)
\,\d\mm
\\
&\leq& \biggl(\bigl\llVert \Gamma(\varphi)\bigr\rrVert _\infty\mathsf
F(f) \int_{\{f\ge N\}} f\, \d \mm \biggr)^{\fraca12}
\end{eqnarray*}
thus proving the limit in \eqref{eq69bis}. The same argument provides
the estimate
%
\begin{equation}
\label{eqFisherbound} \qquad \int_X\psi\bigl\llvert \tilde\Gamma(f,
\varphi) \bigr\rrvert \,\d\mm\leq\sqrt{{\mathsf F}(f)} \biggl(\int
_X\psi^2\Gamma (\varphi) f\,\d\mm
\biggr)^{1/2},\qquad\varphi\in\cVu, \psi\geq0.
\end{equation}

%
\begin{theorem}[(Integration by parts of ${\DeltaE^{(1)}}$)]
\label{thmfurtherDeltaEu}
If $\BE K\infty$ holds, then for every $f\in L^1_+(X,\mm)$ with
$\mathsf F(f)<\infty$ we have
%
\begin{equation}
\label{eqintbyparts1}\int_X \tilde\Gamma(f,\varphi)\,\d\mm=-\int
_X f\DeltaE\varphi\,\d\mm \qquad \mbox{for every }\varphi
\in\cVd.
\end{equation}
In addition, if $f\in D({\DeltaE^{(1)}})$ it holds
%
\begin{equation}
\label{eqdefDelta1} \int_X\tilde\Gamma(f,\varphi)\,\d\mm=-\int
_X \varphi{\DeltaE ^{(1)}}f\, \d\mm\qquad\forall
\varphi\in\cVu.
\end{equation}
\end{theorem}

\begin{pf} Formula \eqref{eqintbyparts1} follows by the limit
formula in
\eqref{eq69bis} simply integrating by parts before passing
to the limit as $N\to\infty$.
Assuming now $f\in D({\DeltaE^{(1)}})$ we have
\begin{eqnarray*}
-\int_X {\DeltaE^{(1)}}f \varphi\,\d\mm&=& \lim
_{t\down0}\frac{1}t\int_X (f-\sP t
f)\varphi\,\d\mm= \lim_{t\down0}\frac{1}t\int
_X f(\varphi-\sP t\varphi)\,\d\mm
\\
&=&\lim_{t\down0}\frac{1}t \int_0^t
\int_X f \DeltaE ( \sP s\varphi)\,\d\mm\,\d s
\\
&=&\lim_{t\down0} \frac{1}t \int_0^t
\int_X \tilde\Gamma(f,\sP s \varphi)\,\d\mm\,\d s=\int
_X \tilde\Gamma(f,\varphi),
\end{eqnarray*}
where the last limit follows by \eqref{eqFisherbound} and the fact
that
\[
\hspace*{-10pt}\Gamma \biggl(\frac{1}t\int_0^t
\sP s\varphi\,\d s-\varphi \biggr)\le \frac{1}t\int_0^t
\Gamma(\sP s\varphi-\varphi)\,\d s\weaksto0\qquad \mbox{in }L^\infty(X,
\mm) \mbox{ as }t\down0.
\]
\upqed
\end{pf}

\subsection{Log-Harnack and L\,logL estimates}
\label{subseclogHarnack}
%

%
%
\begin{lemma}\label{lemreglog1}
Let $\omega\dvtx [0,\infty)\to\R$ be a function
of class $C^2$, let $f\in\Lip_b(X)\cap\cV$
and let $\mu\in{\mathscr P}(X)$.
The function
\[
G(s):=\int_X \omega (\sP{t-s}f )\,\d\sH{s}\mu,\qquad s
\in[0,t], 
\]
belongs to $C^0([0,t])\cap C^1(0,t)$ and for every $s\in(0,t)$ it holds
%
\begin{equation}
\label{eqGreg3} G'(s)=\int_X
\omega''(\sP{t-s}f)\Gamma(\sP{t-s} f)\,\d{\sH{s}\mu}.
\end{equation}
\end{lemma}

\begin{pf}
Since $ {\sH{s}\mu}$ are all probability measures
is not restrictive to assume $\omega(0)=0$. Continuity of $G$ is
obvious, since $\sP{t-s} f$ are equi-Lipschitz, equi-bounded
and the semigroup $\sH{t}$ is weakly-continuous.
Let us first consider the case $\mu=\zeta\mm$ with $\zeta\in
{L^1\cap L^\infty}(X,\mm)$
[in particular $\zeta\in L^2(X,\mm)$].
Setting $f_{t-s}:=\sP{t-s}f$ and $\zeta_s:=\sP s\zeta$,
we observe that a.e. in the open interval $(0,t)$ the following
properties hold:
\begin{itemize}
\item[--] $s\mapsto\zeta_s$ is differentiable in $L^2(X,\mm)$;
\item[--] $s\mapsto\omega(f_{t-s})$ is differentiable in $L^2(X,\mm)$, with
derivative $-\omega'(f_{t-s})\DeltaE f_{t-s}$.
\end{itemize}
Therefore, the chain rule \eqref{eqGchain-laplace} gives
\begin{eqnarray*}
G'(s)&=&-\int_X \omega'(f_{t-s})
\zeta_s \DeltaE f_{t-s}\,\d\mm+ \int_X
\omega(f_{t-s})\DeltaE\zeta_s \,\d\mm
\\
&=& \int_X \bigl(\Gamma\bigl(\omega'(f_{t-s})
\zeta_s,f_{t-s}\bigr) -\Gamma\bigl(\omega(f_{t-s}),
\zeta_s\bigr) \bigr)\,\d\mm
\\
&=&\int_X \omega''(f_{t-s})
\Gamma(f_{t-s}) \zeta_s\,\d\mm
\end{eqnarray*}
for $\Leb{1}$-a.e. $s\in(0,t)$. But, since the right-hand side is
continuous, the formula holds pointwise in $(0,t)$.
The formula for arbitrary measures $\mu=\zeta\mm\in{\mathscr P}(X)$
then follows
by monotone approximation (i.e., considering $\zeta_n=\min\{\zeta,n\}
/c_n$ with $c_n\uparrow1$ normalizing constants),
by using the uniform $L^\infty$ bound on
$\omega''(f_{t-s})$ and on $\Gamma(f_{t-s})$;
the
formula \eqref{eqGreg3}
for $G'$ still provides a continuous function
since $\sfH_s \mu=\zeta_s\mm$ and
$\zeta_s=\sP s \zeta$ is strongly continuous in $L^1(X,\mm)$.
Finally, if $\mu\in{\mathscr P}(X)$,
for every $\eps>0$ and $s\in(\eps,t-\eps)$, setting
$\mu_\eps=\sH{\eps}\mu$ we have
\[
G(s)=\tilde G_\eps(s-\eps) \qquad\mbox{where } \tilde
G_\eps(s):=\int_X \omega \bigl(\sP {(t-\eps)-s}f
\bigr)\,\d\sH{s}\mu_\eps.
\]
Since $\tilde G_\eps$ is a $C^1$ function in $(\eps,t-\eps)$ by the previous
considerations and $\eps>0$ is arbitrary, we conclude that $G\in
C^1(0,t)$ and
for every $s\in(\eps,t-\eps)$ its derivative coincides with
\begin{eqnarray*}
\tilde G_\eps'(s-\eps)&=& \int_X
\omega''\bigl(\sP {t-\eps-(s-\eps)}f\bigr)\Gamma\bigl(
\sP{t-\eps-(s-\eps)} f\bigr)\,\d{ \sH{s-\eps} \mu_\eps}
\\
&=&\int_X \omega''(\sP{t-s}f)
\Gamma(\sP{t-s} f)\,\d{ \sH{s}\mu}.
\end{eqnarray*}
%
\upqed\end{pf}

In order to prove the L\,logL regularization, we use
the next lemma, which follows by a careful adaptation to our more abstract
context of a result by Wang \cite{Wang11}, Theorem~1.1(6).

%
%
\begin{lemma}[(Log-Harnack inequality)]$\!\!\!\!$
For every nonnegative $f\in\break L^1(X,\mm)+L^\infty(X,\mm)$,
$t>0$, $\eps\in[0,1]$, and $x, y\in X$
we have $\log(1+f)\in L^1(X,\sH{t}\delta_y)$
with
%
\begin{equation}
\label{eq62} \tsP{t}\bigl(\log(f+\eps)\bigr) (y)\leq\log \bigl(\tsP{t} f(x)+
\eps \bigr)+ \frac{{\mathsf d}^2(x,y)}{4 \mathrm I_{2K}(t)}.
\end{equation}
\end{lemma}

\begin{pf}
In the following, we set $\omega_\eps(r):=\log(r+\eps)$,
for $r\ge0$ and $\eps\in(0,1]$.
Let us first assume in addition that $f\in\Lip_b(X)\cap L^1(X,\mm
)\cap\cV$,
let $\gamma\dvtx [0,1]\to X$ be a Lipschitz curve connecting $x$ to $y$ in $X$,
and, recalling the definition \eqref{eq57} of $\rmI_K$, let
\[
\tilde\gamma_r=\gamma_{\theta(r)}\qquad\mbox{with } \theta(r)=
\frac{\rmI_{2K} (r)}{\rmI_{2K} (t)}, r\in[0,t].
\]
We set $f_{t-s}:= \sP{t-s} f$ and, for $r\in[0,t]$ and $s\in(0,t)$,
we consider the functions
\[
G(r,s):=\int_X \omega_\eps(f_{t-s})
\,\d{\sH{s}}\delta_{\tilde
\gamma_r} =F_s(\tilde\gamma_r)
\qquad\mbox{with } F_s(x):=\tsP{s} \bigl(\omega_\eps(f_{t-s})
\bigr) (x).
\]
Notice that Lemma~\ref{lemreglog1} with $\mu=\delta_{\tilde\gamma
_r}$ ensures that
for every $r\in[0,t]$ the function $s\mapsto G(r,s)$ is
Lipschitz in $[0,t]$, with
%
\begin{equation}
\label{eqalimon} \qquad\frac\partial{\partial s}G(r,s)= \int_X
\omega_\eps''(f_{t-s})
\Gamma(f_{t-s})\,\d{\sH{s}}\delta _{\tilde\gamma_r} \qquad{\mbox{for
every 
$s\in(0,t)$.}}
\end{equation}
This gives immediately that $G(r,\cdot)$ are uniformly Lipschitz in $[0,t]$
for $r\in[0,t]$. On the other hand, since $\tilde\gamma$ is
Lipschitz, the map
$r\mapsto\sP s\delta_{\tilde\gamma_r}$ is Lipschitz in $[0,t]$
with
respect to the $L^1$-Wasserstein distance
$W_1$
uniformly w.r.t. $s\in[0,t]$. Hence, taking also the fact that
$\omega(f_{t-s})$ are equi-Lipschitz into account, it follows that
also the maps
$G(\cdot,s)$ are Lipschitz in $[0,t]$, with Lipschitz constant
uniform w.r.t. $s\in[0,t]$. These properties imply that the map
$s\mapsto G(s,s)$ is Lipschitz in $[0,t]$.

Since the chain rule and \eqref{eq44} [which can be applied
since we can subtract the constant $\omega_\eps(0)$ from $F_s$
without affecting the calculation of its slope]
give
\[
\llvert \rmD F_s\rrvert ^2\le\rme^{-2Ks} \sP s
\bigl(\bigl(\omega_\eps '(f_{t-s})
\bigr)^2\Gamma(f_{t-s} ) \bigr),
\]
we can use $\theta'(r)=\rme^{2K r}/\rmI_{2K}(t)$ to get the
pointwise estimate
\begin{eqnarray*}
&&\rme^{K(s-r)} \limsup_{h\down0}\frac{\llvert G(r+h,s)-G(r,s)\rrvert }h
\\
&&\qquad \le \sqrt{\theta'(r)}\frac{\llvert \dot\gamma_{\theta(r)}\rrvert }{\sqrt{\rmI_{2K}(t)}}
\rme^{Ks} \llvert \rmD F_s\rrvert (\tilde
\gamma_r)
\\
&&\qquad \le \theta'(r)\frac{\llvert \dot\gamma_{\theta(r)}\rrvert ^2}{4\rmI_{2K}(t)}+ \int
_X \bigl(\omega_\eps'(f_{t-s})
\bigr)^2\Gamma(f_{t-s}) \,\mathrm{d} \sP s\delta_{\tilde\gamma_r}.
\end{eqnarray*}
Applying the calculus lemma \cite{AGS08}, Lemma~4.3.4, and using the identity
$\omega_\eps''=-(\omega_\eps')^2$, the previous inequality with
$r=s$ in combination with \eqref{eqalimon} gives
\begin{eqnarray*}
\frac{\d}{\d s}G(s,s)&\le&\lim_{h\down0}\frac{G(s,s-h)-G(s,s)}h+
\limsup_{h\down0}\frac{\llvert G(s+h,s)-G(s,s)\rrvert }h
\\
&\le&\theta'(s)\frac
{\llvert \dot\gamma_{\theta(s)}\rrvert ^2}{4 \rmI_{2K}(t)} \qquad\mbox{for $\Leb{1}$-a.e. $s
\in(0,t)$.}
\end{eqnarray*}
An integration in $(0,t)$ and a minimization w.r.t. $\gamma$ yield
%
\begin{equation}
\label{eqGalmost} \int_X \omega_\eps(f)\,\d{
\sH{t}}\delta_y\le \omega_\eps(f_t) (x)+
\frac{{\mathsf d}^2(x,y)}{4 \rmI_{2K}(t)}.
\end{equation}
If $f\in L^\infty(X,\mm)$,
we consider
a uniformly bounded sequence $(f_n)$ contained in
$\Lip_b(X,\mm)\cap L^1(X,\mm)\cap\cV$
converging to $f$ pointwise $\mm$-a.e.
Since $\omega_\eps\ge\log(\eps)$ and $\tsP{t} f_n$ converges to
$\tsP{t} f$ pointwise, Fatou's lemma yields \eqref{eqGalmost}
also in this case. Finally, a truncation argument extends the validity of
\eqref{eqGalmost}
and \eqref{eq62} to arbitrary nonnegative $f\in
L^1(X,\mm)+L^\infty(X,\mm)$.
Passing to the limit as $\eps\down0$,
we get \eqref{eq62} also in the case $\eps=0$.

Finally, notice that \eqref{eqGalmost} for $\eps=1$ and
the fact that $\tsP{t} f(x)$ is
finite for $\mm$-a.e. $x$
yield the integrability of {$\log(1+f)$} w.r.t.
$\sH{t}\delta_y$.
\end{pf}

In the sequel, we set
\[
\sfH_t\delta_y=u_t[y] \mm\quad
\mbox{so that}\quad \tsP{t} f(y)=\int_X fu_t[y]
\,\d\mm
\]
for every $\mm$-measurable and {$\mm$-}semi-integrable function $f$.

%
\begin{corollary}
For every $t>0$
and $y\in X$, we have
%
\begin{equation}
\label{eqGquasi} \int_X u_t[y]\log
\bigl(u_t[y] \bigr)\,\d\mm\le\log \bigl(u_{2t}[y](x)
\bigr)+ \frac{{\mathsf d}^2(x,y)}{4 \rmI_{2K}(t)}\qquad\mbox{for $\mm$-a.e. $x\in X$.}\hspace*{-35pt}
\end{equation}
In particular, when $\mm$ is a probability measure,
\[
u_{2t}[y](x)\ge\exp \biggl(-\frac{{\mathsf d}^2(x,y)}{4 \rmI
_{2K}(t)} \biggr)
\qquad\mbox{for $\mm$-a.e. $x\in X$.}
\]
\end{corollary}

\begin{pf}
Simply take $f=u_t[y]$ in \eqref{eq62} and notice that $\tsP{t}f
(x)=u_{2t}[y](x)$ for $\mm$-a.e.
$x\in X$ by the semigroup property.
\end{pf}

In the next crucial result, we will show that \eqref{eqGquasi} yields
$\entv(\sfH_t\mu)<\infty$ for every measure $\mu\in\probt X$.

%
%
\begin{theorem}[(L\,logL regularization)]\label{thmLlogL}
Let $\mu\in\mathscr P_2(X)$ and let $f_t\in\break L^1(X,\mm)$, $t>0$, be
the densities of $\sfH_t\mu
\in\probt X$.
Then
%
\begin{equation}
\label{eqGpoint} \int_X f_t\log
f_t\,\d\mm\le \frac{1}{2\rmI_{2K}(t)} \biggl(r^2+\int
_X{\mathsf d}^2(x,x_0)\,\d\mu (x)
\biggr) -\log \bigl(\mm\bigl(B_r(x_0)\bigr) \bigr)
\hspace*{-25pt}
\end{equation}
for every $x_0\in X$ and $r, t>0$.
\end{theorem}

\begin{pf}
By approximation, it suffices to consider the case when $\mu=f\mm$
with $f\in L^2(X,\mm)$.
Let us fix $x_0\in X$ and $r>0$, set $\sfz=\mm(B_r(x_0))$ and $\nu
=\sfz^{-1}\mm\res B_r(x_0)$.
Notice first that we have the pointwise inequality
\begin{eqnarray*}
\tsP{t}f(z) \log\bigl(\tsP{t}f(z)\bigr) &=& \biggl(\int_X
u_t[z] \,\d\mu \biggr)\log \biggl(\int_X
u_t[z]\,\d\mu \biggr)
\\
&\le&\int_X u_t[z](y)\log
\bigl(u_t[z](y) \bigr)\,\d\mu(y).
\end{eqnarray*}
Since $\tsP{t} f=f_t$ $\mm$-a.e. in $X$, integrating with respect to
$\mm$ and using the symmetry property of $u_t$,
\eqref{eqGquasi}, and Jensen's inequality,
we get
\begin{eqnarray*}
\int_X f_t\log f_t \,\d\mm&\le&
\int_X \biggl(\int_X
u_t[y](z)\log u_t[y](z)\,\d\mm(z) \biggr)\,\d\mu(y)
\\
&=& \int_{X\times X} \biggl(\int_X
u_t[y](z)\log u_t[y](z)\,\d\mm(z) \biggr)\,\d\nu(x)\,\d
\mu(y)
\\
&\le& \int_{X\times X} \biggl(\log \bigl(u_{2t}[y](x)
\bigr)+ \frac{{\mathsf d}^2(x,y)}{4 \rmI_{2K}(t)} \biggr)\,\d\nu(x)\,\d\mu(y)
\\
&\le& \log \biggl(\int_{X}\int_X
u_{2t}[y](x)\,\d\nu(x)\,\d\mu(y) \biggr)
\\
&&{}+ \frac{1}{2\rmI_{2K}(t)} \biggl(r^2+\int_X{
\mathsf d}^2(x,x_0)\,\d\mu (x) \biggr)
\\
&\le& \log\frac{1}{\sfz} +\frac{1}{2\rmI_{2K}(t)} \biggl(r^2+\int
_X{\mathsf d} ^2(x,x_0)\,\d\mu(x)
\biggr),
\end{eqnarray*}
where we used the inequality
\[
\int_X u_{2t}[y](x)\,\d\nu(x)=
\frac{1}{\sfz}\int_{B_r(x_0)} u_{2t}[y](x)\,\d\mm(x)
\le\frac{1}{\sfz}\qquad{\mbox{for every }y\in X}.
\]
\upqed
\end{pf}

We conclude with a further regularization
and an integration by parts
formula for ${\DeltaE^{(1)}}$
in a special case.
Notice that thanks to the regularizing effect of $\sfH_t$ we
can extend the mollification $\frh^\eps$ of the semigroup in \eqref
{eqGG4} to measures $\mu\in\probt X$, that is, we set
%
\begin{equation}
\label{eq77} \mathfrak h^\eps\mu:=\frac{1}\eps\int
_0^\infty f_r \kappa(r/\eps )\,\d r,
\qquad f_r\mm= \sfH_r\mu\qquad\mbox{for }r>0,
\end{equation}
with $\kappa$ as in \eqref{eq65}, obtaining a map $\mathfrak h^\eps
\dvtx \probt X\to D({\DeltaE^{(1)}})$ {still
satisfying \eqref{eqbanff1} [via an elementary approximation based on the
characterization \eqref{eq2} of $D({\DeltaE^{(1)}})$].}

%
\begin{lemma}\label{lemintbyparts2}
Let $\tilde\mu\in\probt X$
and let $f=\frh^\eps\tilde\mu$ as in
\eqref{eq77}.
Then for every
$\eps, T>0$ there exists a constant $C(\eps,T)$
such that
%
\begin{equation}
\label{eq78} \mathsf F(\sP t f)\le C(\eps,T) \biggl(1+\int
_X V^2\,\d\mu \biggr)
\end{equation}
and, writing $\mu_t=\sP tf\mm$,
%
\begin{equation}
\label{eq78bis} \llvert \dot\mu_t\rrvert ^2\le C(\eps,T)
\biggl(1+\int_X V^2\,\d\mu \biggr)\qquad
\mbox{for $\Leb{1}$-a.e. $t\in[0,T]$.}
\end{equation}
Moreover, for every bounded and nondecreasing Lipschitz function
$\omega\dvtx [0,\infty)\to\R$
such that $\sup_r r\omega'(r)<\infty$,
we have
%
\begin{equation}
\int_X \omega(f){\DeltaE^{(1)}}f\,\d\mm+ 4\int
_X f\omega'(f) \Gamma(\sqrt f)\,\d\mm
\leq0.\label{eq79}
\end{equation}
\end{lemma}

\begin{pf}
Combining \eqref{eq66}, \eqref{eqGpoint}, the commutation
identity $\sP t\frh^\eps=\frh^\eps\sP t$, and the convexity
of $\mathsf F$
we get \eqref{eq78}. {We obtain immediately the Lipschitz estimate
\eqref{eq78bis} from \eqref{eq78}
and \eqref{eq63} when $f\in L^2(X,\mm)$. The general case follows by
a truncation
argument.}
Concerning \eqref{eq79}, if $\tilde\mu=\tilde f\mm$ with $\tilde
f\in L^2(X,\mm)$,
then $f\in L^2(X,\mm)$, ${\DeltaE^{(1)}}f=\DeltaE f\in L^2(X,\mm)$
and the
stated inequality
is an equality, by the chain rule
$\Gamma(f,\omega(f))=\omega'(f)\Gamma(f)=4f\omega'(f) \Gamma
(\sqrt f)$.
In the general case,
we approximate $\tilde\mu$ in $\probt X$ by a sequence of measures
$\tilde\mu_n=\tilde f_n\mm$ with $\tilde f_n\in L^2(X,\mm)$ and we
consider $f_n=\frh^\eps\mu_n$.
By \eqref{eqbanff1}, we obtain that ${\DeltaE^{(1)}}f_n\to{\DeltaE
^{(1)}}f$ in
$L^1(X,\mm)$
while, setting $\phi(s)=\int_0^s\sqrt{r^2\omega'(r^2)}\,\d r$,
the lower semicontinuity of
$g\mapsto\int\Gamma(g)\,\d\mm$ and
the strong convergence
of $\sqrt{f_n}$ to $\sqrt f$ in $L^2(X,\mm)$
give
\begin{eqnarray*}
4\int_X f\omega'(f) \Gamma(\sqrt f)\,\d
\mm&=&\int_X \Gamma \bigl(\phi(\sqrt f)\bigr)\,\d\mm\leq
\liminf_{n\to\infty}\int_X\Gamma \bigl({\phi(
\sqrt{f_n})}\bigr)\,\d\mm
\\
&=&\liminf_{n\to\infty}\int_X f_n
\omega'(f_n) \Gamma(\sqrt{f_n})\,\d\mm.
\end{eqnarray*}
\upqed
\end{pf}

Motivated by the regularity assumptions needed in the next
section, we give the following definition.

%
\begin{definition}[(Regular curve)]
\label{defregularcurve}
Let $\rho_s=f_s\mm\in\Probabilities{X}$, $s\in[0,1]$.
We say that $\rho$ is regular if:
\begin{longlist}[(b)]
\item[(a)] $\rho\in\AC2{[0,1]}{(\probt X, W_2)}$;
\item[(b)] $\operatorname{Ent}_{\mm}(\rho_s)$ is bounded;
\item[(c)] $f\in\rmC^1([0,1];L^1(X,\mm))$;
\item[(d)] There exists $\eta>0$ such that
for all $s\in[0,1]$ the function
$f_s$ is representable in the form $\frh^\eta\tilde f_s$
for some $\tilde f_s\in L^1(X,\mm)$; in addition
${\DeltaE^{(1)}}f\in\rmC([0,1];L^1(X,\mm))$ and
%
\begin{equation}
\label{eqGatwick} \sup \bigl\{\mathsf{F}(\sP t f_s)\dvtx s\in[0,1], t
\in[0,T] \bigr\}<\infty \qquad\forall T>0.
\end{equation}
\end{longlist}
\end{definition}

In particular, if $\rho_s=f_s\mm$ is a regular curve, for every $T>0$
there exist positive constants $M_T,E_T,F_T$ such that
\begin{eqnarray}
\int_X V^2\,\d{\sH{t}}
\rho_s\le M_T, \qquad \operatorname{Ent}_{\mm}(\sP t
\rho_s)\le E_T,\qquad  \mathsf F(\sP t f_s)\le
F_T, \nonumber\\
\eqntext{s\in[0,1], t\in[0,T].}
\end{eqnarray}

%
\begin{proposition}[(Approximation by regular curves)]\label{propapprox}
For all
$\rho\in\break \AC2{[0,\allowbreak 1]}{(\probt X, W_2)}$ there exist regular curves
$\rho^n$
such that $\rho^n_s\to\rho_s$ in $\probt{X}$ for all $s\in[0,1]$ and
%
\begin{equation}
\label{eqoptimalaction} \limsup_n\int_0^1
\bigl\llvert \dot\rho^n_s\bigr\rrvert ^2\,\d
s\leq\int_0^1\llvert \dot\rho _s
\rrvert ^2\,\d s.
\end{equation}
{Furthermore, we can build $\rho^n$ in such a way that
$\entv(\rho^n_0)\leq\entv(\rho_0)$.}
\end{proposition}

\begin{pf} First, we extend $\rho$ by continuity and with constant
values in $(-\infty,\allowbreak 0)\cup(1,\infty)$. Then we
define $\rho^{n,1}_s:= {\sH{\tau_n}}\rho_s$, with $\tau_n^{-1}\in
[n,2n]$. By the {continuity and} contractivity
properties of $\sH{t}$, we see that $\rho^{n,1}$ fulfils
(a) and (b) of the definition of regularity, the convergence
requirement of the Lemma and \eqref{eqoptimalaction}.
Indeed, obviously condition (a) is fulfilled, while we
gain $\rho^{n,1}_s\ll\mm$ and $\sup_s\entv(\rho^{n,1}_s)<\infty$
by Theorem~\ref{thmLlogL}.
{In addition, \eqref{eqextraextra} shows that $\entv(\rho
^{n,1}_0)\leq\entv(\rho_0)$.}

In order to achieve condition (c), we do a second regularization, by
averaging w.r.t. the $s$ variable:
precisely, denoting by $f^{n,1}_s$ the densities of $\rho^{n,1}_s$, we
set $\rho^{n,2}_s:=f^{n,2}_s\mm$, where
\[
f^{n,2}_s:=\int_{\R}f^{n,1}_{s-s'}
\chi_n\bigl(s'\bigr)\,\d s'
\]
and $\chi_n\in C^\infty_c(\R)$ are standard convolution kernels
{with support in $(0,\infty)$ convergent to the identity.}
By the convexity properties of squared Wasserstein distance and entropy,
we see that the properties (a), (b) are retained and that the action in
\eqref{eqoptimalaction} does not increase. In addition,
we clearly gain property (c) and, {since $\rho^{n,1}$ is constant in
$(-\infty,0]$, this regularization does not increase the
entropy at time $0$}.

In the last step, we mollify using the heat semigroup, setting $\rho
^{n}_s:=f^{n}_s\mm$, where
$f^{n}_s=\frh^{\eps_n} f^{n,2}_s$ and $\eps_n\downarrow0$.
By the same reasons used for $\rho^{n,2}$, property (a) is retained by
$\rho^n$ and the action in
\eqref{eqoptimalaction}, as well as the Entropy at any time $s$, do
not increase [because the support
of the kernel $\kappa$ is contained in $(0,\infty)$]. In addition,
(c) is retained as well
since $\frh^{\eps}$ is a continuous linear map from $L^1(X,\mm)$ to
{$L^1(X,\mm)$}.
{With this mollification, we gain property (d) from
\eqref{eqbanff1}}
and from \eqref{eq78}, which provides the sup bound \eqref
{eqGatwick} on Fisher information.
\end{pf}

\subsection{Action estimates}
\label{subsecaction}

This section contains the core of the arguments
leading to the equivalence Theorem~\ref{thmmain-identification}.
We refer to \cite{Daneri-Savare08} for
the underlying geometric ideas in a smooth Riemannian context
and the role of the Bochner identity.
Here, we had to circumvent many technical
difficulties related to regularity issues,
to the lack of ultracontractivity properties of the semigroup
$(\sP t)_{t\ge0}$ (i.e.,\vspace*{1pt} regularization from $L^1$ to
$L^\infty$),
and to the weak formulation of the Bakry--\'Emery condition.

Since we shall often consider regular curves $\rho\in\AC
2{[0,1]}{(\probt X, W_2)}$
of measures representable in the form
$\rho=f\mm$ with $f\in C^1([0,1];L^1(X,\mm))$, we shall denote by
$\dot f\in C([0,1];L^1(X,\mm))$ the functional
derivative in\break
$L^1(X,\mm)$, retaining the notation $\llvert \dot\rho_t\rrvert $ for the metric
derivative
w.r.t. $W_2$.

We begin with a simple estimate of the oscillation of $s\mapsto\int_X\varphi\,\d\rho_s$ along absolutely continuous or
$\rmC^1$ curves.

%
\begin{lemma}\label{legeneralplans}
For all $\rho\in\AC2{[0,1]}{(\probt X, W_2)}$, it holds
%
\begin{eqnarray}
\label{eqoscphi} \biggl\llvert \int_X\varphi\,\d
\rho_1-\int_X\varphi\,\d\rho_0
\biggr\rrvert \leq\int_0^1\llvert \dot
\rho_s\rrvert \biggl(\int_X \llvert \rmD
\varphi\rrvert ^2\, \d\rho_s \biggr)^{1/2}\,\d s
\nonumber
\\[-8pt]
\\[-8pt]
\eqntext{\mbox{for every }\varphi\in\cVu.}
\end{eqnarray}
If moreover $\rho=f\mm$ with $f\in\rmC^1 ([0,1];L^1(X,\mm
) )$, for all $\varphi\in\cVu$ it holds
%
\begin{equation}
\label{eqGconj} \qquad\biggl\llvert \int_X \dot
f_s \varphi\,\d\mm\biggr\rrvert \leq \llvert \dot\rho_s
\rrvert \biggl(\int_X \llvert \rmD\varphi\rrvert
^2\,\d\rho_s \biggr)^{1/2}\qquad \mbox{for $
\Leb1$-a.e. $s\in(0,1)$.}
\end{equation}
\end{lemma}

\begin{pf}
It is easy to check that \eqref{eqoscphi} can be obtained using the
representation of
$\rho_s$ given by Lisini's theorem \cite{Lisini07} (see \cite{AGS11a},
Lemma~5.15).

Choosing now a Lebesgue point $\bar s$ both for $s\mapsto|\dot\rho
_s|^2$ and
$\int_X\llvert \rmD\varphi\rrvert ^2\,\d\rho_s$, for all $a>0$ we can pass to
the limit as $h\downarrow0$ in the inequality
\[
\frac{1}{h} \biggl\llvert \int_X\varphi\,\d
\rho_{\bar s+h}-\int_X\varphi\,\d\rho_{\bar s}
\biggr\rrvert \leq\frac{1}{2h}\int_{\bar s}^{\bar s+h}
\biggl(a\llvert \dot\rho _s\rrvert ^2+\frac{1}a
\int_X \llvert \rmD{\varphi}\rrvert ^2\,\d
\rho_s \biggr)\,\d s
\]
and then minimize w.r.t. $a$, obtaining \eqref{eqGconj}.
\end{pf}

%
%
\begin{lemma}
\label{lest-velocity} Let $\rho=f\mm\in\AC2{[0,1]}{(\probt X,W_2)}$
be a regular curve according to Definition~\ref{defregularcurve},
and let $\vartheta\dvtx [0,1]\to[0,1]$ be a $\rmC^1$
function with $\vartheta(i)=i$, $i=0,1$.
Define
\[
\rho_{\st}:= \sfH_{st}\rho_{\vartheta(s)}=f_{\st}
\mm,\qquad s\in [0,t], t\ge0.
\]
Then, for every $t\ge0$, the curve $s\mapsto\rho_{\st}$ belongs to
$\AC2{[0,1]}{(\probt X, W_2)}$ and
$\mathsf F(f_{s,t})$ is uniformly bounded.
Moreover,
for any $\varphi\in\Lip_b(X)$ with
bounded support,
setting $\varphi_s:=Q_{s}\varphi$,
the map $s\mapsto\int_X \varphi_s\,\d\rho_{\st}$ is absolutely continuous
in $[0,1]$ and
%
\begin{eqnarray}
\label{eqGKuwada} \frac{\d}{\d s}\int_X
\varphi_s\,\d\rho_{{s,t}}&=& \dot\vartheta(s)\int
_X \dot f_{ \vartheta(s)} \sP{st}\varphi_s\,
\d\mm-\frac{1}2\int_X \llvert \rmD
\varphi_s\rrvert ^2\,\d\rho_{\st}
\nonumber
\\[-8pt]
\\[-8pt]
&&{} - t\int
_X \tilde\Gamma(f_{\st},\varphi_s)\,
\d\mm
\nonumber
\end{eqnarray}
for $\Leb1$-a.e. $s\in(0,1)$.
\end{lemma}

\begin{pf}
We only consider the case $t>0$ and we set
$\tilde\rho_{s}:=\rho_{\vartheta(s)}=\tilde f_s\mm$.
Notice that $f_{\st}= \sP{st}\tilde f_s$ and $\tilde\rho_s$
satisfies
the same assumptions than $\rho_s$.
Since $\sfH_{t}$ is a Wasserstein $K$-contraction
\begin{eqnarray*}
W_2(\rho_{s_0,t},\rho_{s_1,t})&\le&
\rme^{-Ks_0t}W_2(\tilde\rho_{s_0},
\sfH_{(s_1-s_0)t}\tilde\rho_{s_1})
\\
&\le& \rme^{-Ks_0t} \bigl(W_2(\tilde\rho_{s_0},\tilde
\rho_{s_1})+ W_2(\tilde\rho_{s_1},
\sfH_{(s_1-s_0)t}\tilde\rho_{s_1}) \bigr),
\end{eqnarray*}
and \eqref{eq78bis} and the regularity of $\rho$ give
\[
W_2(\tilde\rho_{s_1},\sfH_{(s_1-s_0)t}\tilde
\rho_{s_1}) \leq C(\rho,T) (s_1-s_0)t\qquad
\mbox{whenever $(s_1-s_0)t\leq T$.}
\]
We conclude that $s\mapsto\rho_{\st}$
belongs to $\AC2{[0,1]}{(\probt X,W_2)}$.
Moreover, using the splitting
\begin{eqnarray*}
&&\int_X \varphi_{s_1}\,\d\rho_{s_1,t} -
\int_X \varphi_{s_0}\,\d\rho_{s_0,t}
\\
&&\qquad= \int_X \varphi_{s_1}\,\d
\rho_{s_1,t}- \int_X \varphi_{s_0}\,\d
\rho_{s_1,t}+ \int_X \varphi_{s_0}\,\d
\rho_{s_1,t}- \int_X \varphi_{s_0}\,\d
\rho_{s_0,t}
\\
&&\qquad\le\llVert \varphi_{s_1}-\varphi_{s_0}\rrVert
_\infty+ \Lip(\varphi_{s_0})W_2(\tilde
\rho_{s_1},\tilde\rho_{s_0})
\end{eqnarray*}
we immediately see that also $s\mapsto\int_X\varphi_s\,\d\rho_{\st
}$ is
absolutely continuous. In order to compute its derivative, we
write
\begin{eqnarray*}
\int\varphi_{s+h}\,\d\rho_{s+h,t}-\int_X
\varphi_s \,\d\rho _{\st}&=& \int\varphi_{s+h}\,
\d\rho_{s+h,t}-\int_X \varphi_s \,\d
\rho _{s+h,t}
\\
&&{}+ \int\sP{(s+h)t}\varphi_s\,\d(\tilde\rho_{s+h}-\tilde
\rho_s)
\\
&&{}+ \int( \sP{ht}\varphi_s -\varphi_s)\, \d{\sH{st}}
\tilde\rho_{s}.
\end{eqnarray*}
Now, the Hopf--Lax formula \eqref{eqidentities} and the strong
convergence of
$f_{s+h,t}$ to $f_{\st}$ in $L^1(X,\mm)$ yield
\[
\lim_{h\downarrow0}\frac{1}{h} \biggl( \int
\varphi_{s+h}\,\d\rho _{s+h,t}-\int_X
\varphi_s \,\d\rho_{s+h,t} \biggr) = - \frac{1}2 \int
_X \llvert \rmD\varphi_s\rrvert ^2
\,\d\rho_{\st}.
\]
The differentiability of $\rho_s$ in $L^1(X,\mm)$ yields
\[
h^{-1}\int\sP{(s+h)t}\varphi\,\d(\tilde
\rho_{s+h}-\tilde\rho _s)\to \dot\vartheta(s)\int
_X \sP{st}\varphi\,\dot f_{ \vartheta(s)}\,\d \mm.
\]
Finally, the next lemma yields
\[
h^{-1}\int( \sP{ht}\varphi_s -
\varphi_s)\, \d\sP{st}\rho_{s} \to-t\int
_X \tilde\Gamma(f_{\st},\varphi_s)\,
\d\mm.
\]
\upqed
\end{pf}

%
\begin{lemma}\label{lemasinRiemannian}
For all $\varphi\in\cVu$ and all $\rho=f\mm\in
\prob{X}$ with
$\mathsf{F}(f)<\infty$ it holds
\[
\lim_{h\downarrow0}\int_X\frac{ \sP h\varphi-\varphi}{h}f\,
\d\mm =-\int_X\tilde\Gamma(f,\varphi)\,\d\mm.
\]
\end{lemma}

\begin{pf}
We argue as in \cite{AGS11b}, Lemma~4.2, proving first that
%
\begin{equation}
\label{eqgrosseto} \int_X\frac{ \sP h\varphi-\varphi}{h}f\,\d\mm= -\int
_0^1\int_X\tilde\Gamma(f,
\sP{rh}\varphi)\,\d\mm\,\d r.
\end{equation}
Notice first that, possibly approximating $\varphi$ with the functions
$\varphi^\eps:=\frh^\eps\varphi$
whose Laplacian is in $L^\infty(X,\mm)$, in the proof of \eqref{eqgrosseto}
we can assume with no loss of generality that $\DeltaE\varphi\in
L^\infty(X,\mm)$. Indeed, \eqref{eqFisherbound}
and the strong convergence in $\Gamma$ norm of $ \sP{rh}\varphi^\eps
$ to $ \sP{rh} \varphi$ ensure the dominated
convergence of the integrals in the right-hand sides, while the
convergence of the left-hand sides is obvious.

Assuming $\DeltaE\varphi\in L^\infty(X,\mm)$, since
\[
\int_X\frac{ \sP h\varphi-\varphi}{h}g\,\d\mm=\int
_0^1\int_X g\DeltaE
\sP{rh}\varphi\,\d\mm\,\d r
\]
for all $g\in L^2(X,\mm)$ we can consider the truncated functions
$g_N=\min\{f,N\}$ and pass to the limit as
$N\to\infty$ to get that $f$ satisfies the same identity. Since
$\DeltaE\sP{rh}\varphi= \sP{rh}\DeltaE\varphi\in L^\infty(X,\mm
)$ we can
use \eqref{eqintbyparts1} to obtain \eqref{eqgrosseto}.

Having established \eqref{eqgrosseto}, the statement follows using
once more \eqref{eqFisherbound} and
the strong convergence of $ \sP{rh}\varphi$ to $\varphi$ in {$\cV$}.
\end{pf}

Under the same assumptions of Lemma~\ref{lest-velocity},
the same computation leading to \eqref{eqGKuwada}
(actually with a simplification, due to the fact that $\varphi$ is
independent on $s$)
and \eqref{eqdefDelta1} give
\[
\frac{\d}{\d s}\int_X \varphi\,\d\rho_{\st}=
\int_X \bigl(\dot \vartheta(s) \sP{st}\dot
f_{ \vartheta(s)}+t \sP{st} {\DeltaE^{(1)}}f_{ \vartheta
(s)}\bigr)
\varphi\,\d\mm
\]
for $\Leb1$-a.e. $s\in(0,1)$
and all $\varphi\in\cVu$. Here we used the fact that
${\DeltaE^{(1)}}( P_r g)= P_r{\DeltaE^{(1)}}g$ whenever
$g\in D({\DeltaE^{(1)}})$.
Since ${\DeltaE^{(1)}}f\in\rmC([0,1];L^1(X,\mm))$,
the right-hand side is a continuous function of $s$, hence
%
\begin{eqnarray}
\label{eqGKuwadabis} \frac{\d}{\d s}\int_X \varphi\,\d
\rho_{\st}=\int_X \bigl(\dot
\vartheta_s \sP{st}\dot f_{ \vartheta(s)} +t \sP{st} {
\DeltaE^{(1)}}f_{ \vartheta(s)}\bigr)\varphi\,\d\mm
\nonumber
\\[-8pt]
\\[-8pt]
\eqntext{\mbox{for every }s\in(0,1).}
\end{eqnarray}
For $\eps>0$, let us now consider the regularized entropy functionals
\begin{eqnarray}
E_\eps(\rho):=\int_X
e_\eps(f)\,\d\mm
\nonumber
\\
\eqntext{\mbox{where } e_\eps'(r):=\log\bigl(\eps+r\land
\eps^{-1}\bigr)\in\operatorname{Lip}\bigl([0,\infty)\bigr),
e_\eps(0)=0.}
\end{eqnarray}
Since we will mainly consider functions $f$ with finite Fisher
information, we will also introduce the function
\[
p_\eps(r):=e_\eps'\bigl(r^2\bigr)-
\log\eps=\log\bigl(\eps+r^2\land\eps^{-1}\bigr) -\log\eps .
\]
Since $p_\eps$ is also Lipschitz and $p_\eps(0)=0$, we have
\[
f\in L^1_+(X,\mm),\qquad  \mathsf F(f)<\infty\quad\Rightarrow\quad
e_\eps'(f)-\log\eps=p_\eps(\sqrt f)\in\cV.
\]

%
\begin{lemma}[(Derivative of $E_\eps$)]
\label{leEderivative}
With the same notation of Lemma~\ref{lest-velocity}, if $\rho$ is
regular and $t>0$ we have for
$g^\eps_{s,t}:=p_\eps(\sqrt{f_{s,t}})$
%
\begin{eqnarray}
\label{eqGmain} %
&&E_\eps(\rho_{1,t})
-E_\eps(\rho_{0,t})
\nonumber
\\[-8pt]
\\[-8pt]
&&\qquad \le \int_0^1 \biggl( -t\int
_X f_{\st}\Gamma\bigl(g^\eps_{s,t}
\bigr)\,\d\mm+ \dot\vartheta _s\int_X \sP{st}
\bigl(g^\eps_{\st} \bigr)\dot f_{ \vartheta(s)}\,\d\mm
\biggr)\,\d s. %
\nonumber
\end{eqnarray}
\end{lemma}

\begin{pf} The weak differentiability of $s\mapsto f_{\st}$
(namely, in duality with functions in $\cVu$) given
in \eqref{eqGKuwadabis} can, thanks to the continuity assumption made
on ${\DeltaE^{(1)}}f_s$, turned into strong $L^1(X,\mm)$ differentiability,
so that
%
\begin{eqnarray}
\label{eqGmain2} \frac{\d}{\d s}f_{\st}= \dot\vartheta_s
\sP{st}\dot f_{ \vartheta(s)}+t \sP{st}\bigl({\DeltaE^{(1)}}f_{
\vartheta
(s)}
\bigr)
\nonumber
\\[-8pt]
\\[-8pt]
\eqntext{\mbox{in $L^1(X,\mm)$, for all $s\in(0,1)$.}}
\end{eqnarray}
Since $e_\eps$ is of class $\rmC^{1,1}$, it is easy to check that
this implies the absolute continuity of
$s\mapsto E_\eps(\rho_{\st})$. In addition, the mean value theorem gives
\[
\frac{\d}{\d s}E_\eps(\rho_{\st})=\lim
_{h\to0}\int_X e_\eps
'(f_{\st})\frac{f_{s+h,t}-f_{\st}}{h}\,\d\mm\qquad\forall s
\in(0,1).
\]
Notice also that Lemma~\ref{lemintbyparts2} with $f=f_{\st}$ and
$\omega=e_\eps'-\log\eps$ gives
[since $e_\eps'(f_{\st})-\log\eps=
p_\eps(\sqrt{f_{\st}})$ is nonnegative and integrable and $ \sP
{st}({\DeltaE^{(1)}}f_{ \vartheta(s)})={\DeltaE^{(1)}}( \sP{st}f_{
\vartheta
(s)})$ has null mean]
%
\begin{equation}
\label{eqintbypartsDelta1} \int_X \sP{st}\bigl({\DeltaE^{(1)}}f_{ \vartheta(s)}
\bigr) e_\eps'(f_{\st}) \,\d\mm\leq-4 \int
_X f_{\st} e_\eps''(f_{\st})
\Gamma(\sqrt{f_{\st}})\,\d\mm.
\end{equation}
Now we use \eqref{eqGmain2}, \eqref{eqintbypartsDelta1}
and conclude
\begin{eqnarray*}
\frac{\d} {\d s}E_\eps(\rho_{\st})&\leq& -t\int
_X 4f_{\st}e_\eps''(f_{\st})
\Gamma(\sqrt{f_{\st}})\,\d\mm
\\
&&{}+ \dot\vartheta_s \int_X
\bigl(e_\eps'(f_{\st})-\log\eps\bigr) \sP{\st}
\dot f_{ \vartheta(s)}\,\d\mm.
\end{eqnarray*}
On the other hand, since
$4r e_\eps''(r)\ge4r^2 (e_\eps''(r) )^2=
r (p_\eps'(\sqrt r) )^2$,
we get
\[
- 4f_{\st}e_\eps''(f_{\st})
\Gamma(\sqrt{f_{\st}})\leq - f_{\st} \bigl(p_\eps'(
\sqrt{f_{\st}}) \bigr)^2 \Gamma(\sqrt{f_{\st}}) =
-f_{\st}\Gamma\bigl(p_\eps(\sqrt{f_{\st}})\bigr)
\]
and an integration with respect to $s$ and the definition of $g^\eps
_{\st}$ yield \eqref{eqGmain}.
\end{pf}

%
\begin{theorem}[(Action and entropy estimate on regular curves)]\label
{thmaction}
Let $\rho_s=f_s\mm$ be a regular curve. Then,
setting $\rho_{1,t}= \sfH_{t}\rho_1$,
it holds
%
\begin{equation}
\label{eq32} W_2^2(\rho_0,
\rho_{1,t})+ 2t \entv(\rho_{1,t}) \le\rmR_K^2(t)
\int_0^1 \llvert \dot\rho_s\rrvert
^2\,\d s+ 2t\entv(\rho_0),
\end{equation}
where
\[
\rmR_K(t):=\frac{t}{\rmI_{K}(t)}= \frac{Kt}{\rme^{Kt}-1}\qquad\mbox{if }K
\neq0;\qquad  \rmR_0(t)\equiv1.
\]
\end{theorem}

\begin{pf}
Set
$\rho_{\st},f_{\st}$ as in Lemma~\ref{lest-velocity},
$p_\eps(r)=e_\eps'(r^2)-\log\eps$,
$g^\eps_{\st}=p_\eps(\sqrt{f_{\st}})$ as
in Lemma~\ref{leEderivative},
$q_\eps(r):=\sqrt r(2- \sqrt r p_\eps'(\sqrt r))$,
and
$\varphi_s:=Q_s\varphi$ for
a Lipschitz function $\varphi$ with bounded support.

Notice that by \eqref{eq69}
\[
\tilde\Gamma(f_{s,t},\varphi_s)= 2\sqrt{f_{\st}}
\Gamma(\sqrt{f_{\st}},\varphi_s) =f_{\st} \Gamma
\bigl(g^\eps_{\st},\varphi_s\bigr)+
q_\eps(f_{\st})\Gamma(\sqrt{f_{\st}},
\varphi_s).
\]
Applying \eqref{eqGKuwada}, \eqref{eqGmain}
in the weaker form
\[
t E_\eps(\rho_{1,t}) -t E_\eps(
\rho_{0,t})\le \int_0^1 \biggl( t\dot
\vartheta_s\int_X \sP{st}
\bigl(g^\eps_{\st} \bigr)\dot f_{ \vartheta(s)}\,\d\mm -
\frac{t^2}{2}\int_X f_{\st} \Gamma
\bigl(g^\eps_{s,t} \bigr)\,\d\mm \biggr)\,\d s %
\]
and eventually the Young inequality $2xy\leq ax^2+y^2/a$ in \eqref{eqGconj}
with $a:=\dot\vartheta_s\rme^{-2Ks t}$,
we obtain
\begin{eqnarray*}
&&\int_X \varphi_1\,\d\rho_{1,t}-
\int_X \varphi_0\,\d\rho_{0,t} +t
\bigl( E_\eps(\rho_{1,t})-E_\eps(
\rho_{0,t}) \bigr)
\\
&&\qquad\le \int_0^1 \biggl( \dot
\vartheta_s \int_X \dot f_{ \vartheta(s)}
\sP{st} \bigl(\varphi_s+t g^\eps_{\st} \bigr)\,\d
\mm -\frac{1}2\int_X \bigl(\llvert \rmD
\varphi_s\rrvert ^2+t^2 \Gamma
\bigl(g^\eps_{\st}\bigr) \bigr)\,\d\rho_{\st}
\\
&&\hspace*{59pt}\quad \qquad {}- t\int_X \Gamma\bigl(g^\eps_{\st},
\varphi_s\bigr) \,\d\rho_{\st}- t\int_X
q_\eps(f_{s,t}) \Gamma(\sqrt{f_{\st}},
\varphi_s) \,\d\mm \biggr)\,\d s
\\
&&\qquad\le \int_0^1 \biggl( \dot
\vartheta_s \int_X \dot f_{ \vartheta(s)}
\sP{st} \bigl(\varphi_s+t g^\eps_{\st} \bigr)\,\d
\mm -\frac{1}2\int_X \Gamma\bigl(
\varphi_s+t g^\eps_{\st} \bigr)\,\d
\rho_{\st}
\\
&&\hspace*{131pt}\quad\qquad {}- t\int_X q_\eps(f_{s,t})
\Gamma(\sqrt{f_{\st}},\varphi_s) \,\d\mm \biggr)\,\d s
\\
&&\qquad\le \int_0^1 \biggl( \dot
\vartheta_s  \int_X \dot f_{ \vartheta(s)}
\sP{st} \bigl(\varphi_s+t g^\eps_{\st} \bigr)\,\d
\mm
\\
&&\hspace*{19pt}\qquad \quad {} -\frac{1}2 \rme^{2Kst} \int_X
\Gamma\bigl(\sP{st} \bigl(\varphi_s+t g^\eps_{\st}
\bigr)\bigr)\,\d \rho_{ \vartheta(s)}
\\
&&\hspace*{40pt}\quad\qquad{}+ t\int_X \bigl\llvert q_\eps(f_{\st})
\bigr\rrvert \bigl\llvert \Gamma(\sqrt{f_{\st}},\varphi_s)
\bigr\rrvert \,\d\mm \biggr)\,\d s
\\
&&\qquad\le \int_0^1 \biggl(\frac{1}2{
(\dot\vartheta_s)^2\rme^{-2Kst}}\llvert \dot
\rho_s\rrvert ^2 + \frac{t}{8} \delta\mathsf{F}(
\rho_{\st})+\frac{t}{2\delta}\int_X
q_\eps^2 (f_{\st})\llvert \rmD
\varphi_s\rrvert ^2\,\d\mm \biggr)\,\d s.
\end{eqnarray*}
Now we pass first to the limit as $\eps\down0$,
observing that $p_\eps'(r)=2r(\eps+r^2)^{-1}\nchi_{r^2<\eps^{-1}}$ gives
\[
q^2_\eps(r)=4r \biggl(1-\frac
{r}{\eps+r}
\biggr)^2\nchi_{r<\eps^{-1}}\le4r,\qquad \lim_{\eps\down0}q^2_\eps(r)=0,
\]
and then as $\delta\down0$;
choosing
\[
\vartheta(s):=\frac{\rmI_{K}(st)}{\rmI_{K}(t)}\quad \mbox{so that} \quad \dot\vartheta(s)=
\rmR_K(t)\rme^{Kst},
\]
we obtain
\[
\int_X \varphi_1\,\d\rho_{1,t}-
\int_X \varphi_0\,\d\rho_{0,t} +t
\bigl( \operatorname{Ent}_{\mm}(\rho_{1,t})-
\operatorname{Ent}_{\mm}(\rho _{0,t}) \bigr)\le
\frac{1}2 \rmR_K^2(t) \int_0^1
\llvert \dot\rho_s\rrvert ^2\,\d s.
\]
Eventually we take the supremum with respect to $\varphi$, obtaining
\[
\frac{1}2 W_2^2(\rho_{1,t},
\rho_{0}) +t \bigl( \operatorname{Ent}_{\mm}(
\rho_{1,t})-\operatorname{Ent}_{\mm}(\rho _{0,t})
\bigr)\le \frac{1}2 \rmR_K^2(t) \int
_0^1 \llvert \dot\rho_s\rrvert
^2\,\d s.
\]
\upqed
\end{pf}

%
%
\begin{theorem}[{[$\BE K\infty$ is equivalent to $\RCD K\infty$]}]
\label{thmmain-identification}
If $(X,\tau,\mm,\cE)$ is a Riemannian Energy measure space
satisfying \eqref{eq80} relative to ${\mathsf d}_\cE$ and $\BE
K\infty$,
then $(X,{\mathsf d}_\cE,\mm)$ is a $\RCD K\infty$ space.

Conversely, if $(X,{\mathsf d},\mm)$ is a $\RCD K\infty$ space then,
denoting by
$\tau$ the topology induced by ${\mathsf d}$ and by
$\cE=2\C$ the Cheeger energy,
$(X,\tau,\mm,\cE)$ is a Riemannian Energy measure space
satisfying ${\mathsf d}_\cE={\mathsf d}$, \eqref{eq80}, and $\BE
K\infty$.
\end{theorem}

\begin{pf}
We have to show that \eqref{defEVIK} holds with $\sH{t}\rho$
precisely given by the dual semigroup. {By a standard density argument
(see \cite{AGS11b}, Proposition~2.21) suffices
to show this property for all $\rho\in\probt{X}$ with finite entropy.}
By the semigroup property {and \eqref{eqextraextra}},
it is sufficient to prove \eqref{defEVIK} at $t=0$ {for all $\nu\in
\probt{X}$ with finite entropy}. For any
$\rho\in\AC2{[0,1]}{(\probt X, W_2)}$ joining $\rho_0:=\nu$ to
$\rho_1:=\rho$ we find regular curves $\rho^n$ as in
Proposition~\ref{propapprox}
{with $\entv(\rho^n_0)\leq\entv(\rho_0)$} and apply the action estimate
\eqref{eq32} to the curves $\rho^n_{\st}=\sfH_{st}\rho^n_s$ to obtain
\[
W_2^2\bigl(\sfH_t
\rho^n_1,\rho^n_0\bigr)+2t\entv
\bigl(\sfH_t\rho^n_1\bigr) \le
\rmR_K^2(t)\int_0^1
\bigl\llvert \dot\rho^n_s\bigr\rrvert ^2\,\d
s+ 2t\entv\bigl(\rho^{_0}\bigr).
\]
We pass to the limit as $n\to\infty$ and use the lower
semicontinuity of $W_2$ and of the entropy,
to get
\[
W_2^2(\sfH_t\rho,\nu)+ 2t \entv({\sP t\rho})
\le\rmR_K^2(t)\int_0^1
\llvert \dot\rho_s\rrvert ^2\,\d s + 2t\entv(\nu).
\]
We can now minimize w.r.t. $\rho$ and use the fact
that
$(\probt{X},W_2)$ is a length space because $(X,\mathsf{d}_\cE)$ is
(this can be obtained starting from an optimal Kantorovich plan $\ppi
$, choosing in a $\ppi$-measurable way
a $\eps$-optimal geodesic with constant speed
as in the proof of Theorem~\ref{thmKuwadaequivalence}), getting
\[
W_2^2(\sfH_t \rho,\nu)+2t\entv({\sP t\rho})
\le\rmR_K^2(t)W_2^2(\rho,\nu) +
2t\entv(\nu).
\]
After dividing by $t>0$, letting $t\down0$ and using
$\rmR_K(t)=1-\frac{K}2t+o(t)$
we obtain \eqref{defEVIK}.

The converse implication, from $\RCD{K}{\infty}$ to
$\BE K\infty$ has been proved in
\cite{AGS11b}, Section~6.
\end{pf}

We conclude with an immediate application of the previous result to
metric measure spaces:
it follows by Theorem~\ref{thmequivalence}
and
Corollary~\ref{corBE=contraction}. Notice that for
the Cheeger energy condition (\ref{EDb}) and
upper-regularity are always true.

%
\begin{corollary}
\label{corRCD-criterion}
Let $(X,{\mathsf d},\mm)$ be a metric measure space satisfying
\textup{(MD$+$exp)}
with a quadratic Cheeger energy $\C$
defining the Dirichlet form $\cE=2\C$ as in
\eqref{eq88}.
$(X,{\mathsf d},\mm)$ is a $ \operatorname{RCD}(K,\infty)$-space
if (and only if) at least one of the following properties hold:
\begin{longlist}[(iii)]
\item[(i)] $(X,\mathsf{d})$ is a length space and $(\sP t)_{t\ge0}$ satisfies property \eqref{eq44}, that is,
for every function $f\in D(\C)$ with $\llvert \rmD f\rrvert _w\le1$
and every $t>0$,
%
\begin{equation}
\label{eq89} \quad \sP t f\in\Lip_b(X),\qquad \llvert \rmD\sP t f
\rrvert ^2\le\rme^{-2K t} \sP t \bigl(\llvert \rmD f\rrvert
_w^2 \bigr)\qquad \mm\mbox{-a.e. in $X$}.
\end{equation}
\item[(ii)] Conditions (\ref{EDa}), \eqref{eqwFeller}
(or $\cL=\cL_\rmC$), and
$\BE K\infty$ hold.
\item[(iii)] Condition (\ref{EDa}) holds and
$(\sH{t})_{t\ge0}$ satisfies the contraction property
\eqref{eq56} [or $(\sP t)_{t\ge0}$ satisfies the Lipschitz bound
\eqref{eq43}].
\end{longlist}
\end{corollary}

\section{Applications of the equivalence result}
\label{secapplications}

In this section, we present two applications of our equivalence result:
in one direction, we can use it to prove that the $\RCD{K}{\infty}$
condition is stable under tensorization, a property proved in
\cite{AGS11b} only under a nonbranching assumption on the base spaces.
We will also prove the same property for
Riemannian Energy measure spaces
satisfying the $\BE KN$ condition, obtaining in particular
the natural bound on the dimension of the product.

In the other direction, we shall prove a stability result
for Riemannian Energy measure spaces satisfying a uniform $\BE KN$
condition under
Sturm--Gromov--Hausdorff convergence.

\subsection{Tensorization}

Let $(X,{\mathsf d}_X,\mm_X)$, $(Y,{\mathsf d}_Y,\mm_Y)$ be
$\RCD K\infty$
metric measure spaces.

We may define a product space $(Z,{\mathsf d},\mm)$ by
%
\begin{eqnarray}\label{eqproduct}
Z&:=&X\times Y,\qquad {\mathsf d} \bigl((x,y),\bigl(x',y'
\bigr) \bigr):=\sqrt{{\mathsf d}_X^2
\bigl(x,x'\bigr)+{\mathsf d} _Y^2
\bigl(y,y'\bigr)},
\nonumber
\\[-8pt]
\\[-8pt]
\mm&:=&\mm_X\times\mm_Y.
\nonumber
\end{eqnarray}
Notice that also $(Z,{\mathsf d},\mm)$ satisfies
the quantitative $\sigma$-finiteness condition
\eqref{eq80}.

Denoting by $\cE^X,\cE^Y$ the Dirichlet forms associated to
the respective (quad\-ra\-tic) Cheeger energies
with domains $\cV^X,\cV^Y$,
we consider the Cartesian--Dirichlet form
%
\begin{equation}
\label{eq115} \qquad \cEZ(f):=\int_Y \cE^X
\bigl(f^y\bigr)\,\d\mm_Y(y)+\int_X
\cE^X\bigl(f^x\bigr)\,\d\mm_X(x), \qquad f
\in L^2(Z,\mmZ),
\end{equation}
where for every $f\in L^2(Z,\mmZ)$ and
$z=(x,y)\in Z$
we set $f^x=f(x,\cdot)$, $f^y(\cdot)=f(\cdot,y)$.
By \cite{AGS11b}, Theorem~6.18,
the proper domain $\cVZ$ of $\cEZ$ in $L^2(Z,\mmZ)$
is the Hilbert space
\begin{eqnarray*}
\cVZ&:=& \bigl\{f\in L^2(Z,\mmZ)\dvtx f^x\in
\cV^Y\mbox{ for $\mm_X$-a.e. $x\in X$},
\\
&&\hphantom{ \bigl\{}f^y\in\cV^X\mbox{ for $
\mm_Y$-a.e. $y\in Y$}, 
\bigl\llvert \rmD
f^x\bigr\rrvert _w(y), \bigl\llvert \rmD
f^y\bigr\rrvert _w(x) \in L^2(Z,\mmZ) \bigr
\}.
\end{eqnarray*}
Furthermore, $\frac{1}2\cEZ$ coincides with the Cheeger energy $\C$ in
$(Z,\sfdZ,\mmZ)$, and
%
\begin{eqnarray}
\label{eqten22} \llvert \rmD f\rrvert _w^2(x,y)=
\Gamma(f) (x,y)= \bigl\llvert \rmD f^x\bigr\rrvert
^2_w(y)+\bigl\llvert \rmD f^y\bigr\rrvert
^2_w(x)
\nonumber
\\[-8pt]
\\[-8pt]
\eqntext{\mbox{for $\mmZ$-a.e. $(x,y)\in Z$.}}
\end{eqnarray}
Even though the result in \cite{AGS11b}
is stated for metric measure spaces with finite measure, the
proof extends with no difficulty to the $\sigma$-finite case.
Also, it is worthwhile to mention that the curvature assumption
on the base spaces plays
almost no role in the proof, it is only used to build,
via the product semigroup, an operator with good regularization properties
(specifically from $L^\infty$ to $\rmC_b$); see
\cite{AGS11b}, Lemma~6.13.

It will be convenient, as in \cite{AGS11b} and in the previous
sections, to work with a pointwise
defined version of the semigroups in the base spaces, namely
\[
\sP t^Xu(x):=\int u\bigl(x'\bigr)\,\d
\sfH^X_t\delta_x\bigl(x'
\bigr),\qquad \sP t^Yv(y):=\int v\bigl(y'\bigr)\,\d
\sfH^Y_t\delta_y\bigl(y'\bigr)
\]
for $u\dvtx X\to\R$ and $v\dvtx Y\to\R$ bounded Borel, where $(\sH
{t}^X)_{t\ge0}$
and $(\sH{t}^Y)_{t\ge0}$
denote the Wasserstein semigroups on the base spaces.
These pointwise defined semigroups also provide the continuous versions
of \eqref{eqFeller} for the base spaces, see
\cite{AGS11b}, Theorem~6.1(iii).

Since the heat flows are linear, the tensorization \eqref{eqten22}
implies a corresponding tensorization of the heat flows,
namely for all $g\dvtx Z\to\R$ bounded and Borel, for $\mm$-a.e.
$(x,y)\in Z$ the following identities hold:
%
\begin{eqnarray}
\label{eqten1} \sP t g(z)&=&\int_X \sP t^Y g
\bigl(x',\cdot\bigr) (y)\,\d\sH{t}^X\delta
_x\bigl(x'\bigr),
\nonumber
\\[-8pt]
\\[-8pt]
\sP t g(z)&=&\int_Y\sP t^X g\bigl(
\cdot,y'\bigr) (x)\,\d\sH{t}^Y\delta_y
\bigl(y'\bigr).
\nonumber
\end{eqnarray}
With these ingredients at hand, we can now prove
the main tensorization properties.

%
\begin{theorem}
With the above notation, if $(X,{\mathsf d}_X,\mm_X)$ and $(Y,{\mathsf
d}_Y,\mm
_Y)$ are $\RCD{K}{\infty}$ spaces then
the space $(Z,{\mathsf d},\mm)$ is $\RCD{K}{\infty}$ as well.
\end{theorem}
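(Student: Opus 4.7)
The plan is to apply Theorem~\ref{thm:main-identification}: it suffices to show that $(Z,\tau,\mm,\cE^Z)$, with $\tau$ induced by $\sfd$, is a \Riemannian\ Energy measure space whose intrinsic distance equals $\sfd$, satisfying \eqref{eq:80} and $\BE K\infty$. The exponential growth \eqref{eq:80} on $Z$ is immediate from the product structure of balls and the same bound on each factor.

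The Energy measure and Riemannian structure on $Z$ is essentially available from \cite[Thm.~6.18]{AGS11b}: the Cheeger energy of $(Z,\sfd,\mm)$ is quadratic and equals $\tfrac12\cE^Z$ with $|\rmD f|_w^2=\Gamma^Z(f)$ by \eqref{eq:ten22}, so $\cE^Z$ is automatically upper regular through \eqref{eq:105}. By Remark~\ref{rem:QCh-vs-Dirichlet} applied on each factor (both being \Riemannian), any continuous $f\in\cV^Z$ with $\Gamma^Z(f)\le 1$ has continuous fiber functions $f(\cdot,y)$, $f(x,\cdot)$ which, thanks to \eqref{eq:ten22}, satisfy $\Gamma^X(f(\cdot,y))\le 1$ and $\Gamma^Y(f(x,\cdot))\le 1$; hence each fiber is $1$-Lipschitz on its factor, forcing $f$ to be $1$-Lipschitz w.r.t.\ $\sfd$. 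Combined with the trivial reverse inequality, this gives $\sfd_{\cE^Z}=\sfd$ and condition \textbf{(\ED)}. The identity $\cL=\cL_\rmC$ on $Z$ reduces via Theorem~\ref{thm:Feller=Riemannian} to the weak-Feller property of the $Z$-semigroup, which is inherited from the factor semigroups through the tensorization \eqref{eq:ten1} together with the $W_2$-continuity of the dual semigroups on $X$ and $Y$ coming from Theorem~\ref{thm:equivalence}.

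The crucial step is $\BE K\infty$ on $Z$, which I establish in the pointwise form \eqref{eq:BEKnu.6}. Define $P_t^X g(x,y):=\int g(x',y)\,\d\sfH_t^X\delta_x(x')$ and analogously $P_t^Y$; the tensorization \eqref{eq:ten1} reads $\sP t^Z=P_t^X\circ P_t^Y=P_t^Y\circ P_t^X$. For $f\in\cV^Z$, \eqref{eq:ten22} gives
\begin{equation*}
  \Gamma^Z(\sP t^Z f) = \Gamma^X(\sP t^Z f)+\Gamma^Y(\sP t^Z f),
\end{equation*}
where $\Gamma^X$ and $\Gamma^Y$ denote the fiberwise Carr\'e du champ in the $x$ and $y$ variable respectively. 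Applying $\BE K\infty$ on $X$ fiberwise to $x\mapsto(P_t^Y f)(x,y)$ and then the Jensen inequality for the quadratic, lower semicontinuous form $\Gamma^X$ against the Markov kernel $\sfH_t^Y\delta_y$, one obtains the two bounds
\begin{equation*}
  \Gamma^X(P_t^X P_t^Y f) \le \rme^{-2Kt}P_t^X\bigl(\Gamma^X(P_t^Y f)\bigr),\qquad \Gamma^X(P_t^Y f)\le P_t^Y\Gamma^X(f),
\end{equation*}
whose composition yields $\Gamma^X(\sP t^Z f)\le \rme^{-2Kt}\sP t^Z\Gamma^X(f)$. The symmetric bound with $\Gamma^Y$ follows by exchanging the roles of $X$ and $Y$, and summing gives $\Gamma^Z(\sP t^Z f)\le \rme^{-2Kt}\sP t^Z\Gamma^Z(f)$, which is exactly \eqref{eq:BEKnu.6} with $\nu=0$.

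The main obstacle is the measure-theoretic bookkeeping of the fiberwise arguments: one must ensure that $f^y\in\cV^X$ and $f^x\in\cV^Y$ for a.e.\ fiber (already built into the characterization of $\cV^Z$), that the fiberwise inequalities assemble coherently into $\mm$-a.e.\ bounds on $Z$ (via Fubini and the absolute continuity of $\sfH_t^X\delta_x$ and $\sfH_t^Y\delta_y$ w.r.t.\ $\mm_X$ and $\mm_Y$ provided by Theorem~\ref{thm:equivalence}), and that the Jensen commutation estimate is justified through the convexity of $h\mapsto \sqrt{\Gamma^X(h)}$ recalled after \eqref{eq:20}. Once $\BE K\infty$ is established, Theorem~\ref{thm:main-identification} yields $\RCD K\infty$ on $(Z,\sfd,\mm)$.
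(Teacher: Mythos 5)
Your core computation is essentially the one in the paper, just reordered: the paper also tensorizes the semigroup via \eqref{eq:ten1}, applies the factor gradient estimate fiberwise, and commutes the other factor's kernel past the gradient by convexity plus H\"older (it does the Jensen step first and the $\BE K\infty$ step second, working with slopes $|\rmD\cdot|$ of the pointwise semigroup rather than with $\Gamma$, and at the end converts the fiberwise bound into a bound on the full slope via the upper-gradient lemma \cite[Lemma~6.2]{AGS11b}). The difference is the packaging: the paper verifies criterion (i) of Corollary~\ref{cor:RCD-criterion}, i.e.\ the length property plus \eqref{eq:89}, and lets Theorem~\ref{thm:equivalence} deduce the whole \Riemannian\ Energy measure space structure (including \textbf{(\ED.a)} and $\sfd_\cE=\sfd$) from the gradient estimate; you instead try to verify the \Riemannian\ axioms and $\sfd_{\cE^Z}=\sfd$ up front and then invoke Theorem~\ref{thm:main-identification}.

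That choice of packaging is where you have a genuine gap. Your argument for $\sfd_{\cE^Z}=\sfd$ reads: $\Gamma^Z(f)\le 1$ gives $\Gamma^X(f^y)\le 1$ and $\Gamma^Y(f^x)\le 1$, hence each fiber is $1$-Lipschitz, ``forcing $f$ to be $1$-Lipschitz w.r.t.\ $\sfd$.'' This last implication is false: fiberwise $1$-Lipschitz only yields Lipschitz constant $\sqrt2$ for the product distance $\sqrt{\sfd_X^2+\sfd_Y^2}$ (consider $f(x,y)=x+y$ on $\R\times\R$, which is $1$-Lipschitz in each variable but has slope $\sqrt 2$). By splitting $\Gamma^Z(f)\le1$ into the two separate bounds you discard exactly the information needed; the correct argument must keep the sum $\Gamma^X(f^y)+\Gamma^Y(f^x)\le1$ and use Cauchy--Schwarz along curves $\gamma=(\gamma^X,\gamma^Y)$, i.e.\ the upper-gradient statement of \cite[Lemma~6.2]{AGS11b}. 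Since Theorem~\ref{thm:main-identification} concludes that $(Z,\sfd_{\cE^Z},\mm)$ is $\RCD K\infty$, without the identity $\sfd_{\cE^Z}=\sfd$ your conclusion concerns the wrong distance, so this step is load-bearing. The cleanest repair is to follow the paper's order: first establish your gradient contraction together with the continuity of $\sP t^Z f$ (which you have), and then invoke Theorem~\ref{thm:equivalence}(v)$\Rightarrow$(i) or Corollary~\ref{cor:RCD-criterion}(i), where \textbf{(\ED.a)} is an output of the gradient estimate rather than an input. The remaining measure-theoretic points you flag (a.e.\ fiber regularity, extending the convexity of $h\mapsto\sqrt{\Gamma^X(h)}$ from finite convex combinations to integration against $\sH^Y_t\delta_y$) are real but handled by the same devices the paper uses, namely the pointwise semigroup of Proposition~\ref{prop:precise-rep} and \eqref{eq:20}.
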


\begin{pf} According to the characterization of $\RCD{K}{\infty}$
given in point (i) of Corollary~\ref{corRCD-criterion},
since the Cheeger energy in $Z$ satisfies \eqref{eq88}
by the above mentioned result of \cite{AGS11b},
it suffices to
show that the length space property
and \eqref{eq89}
are stable under tensorization.

\emph{Stability of the length space property.} This is
simple to check, one obtains an almost minimizing geodesic
$\gamma\dvtx [0,1]\to Z$ combining almost minimizing geodesics on the base
spaces with constant speed and parameterized on $[0,1]$.

\emph{Stability of \textup{\eqref{eq89}}.}
Let
us first notice that
$\sP t$ maps bounded and Borel functions into continuous ones, thanks of
any of the two
identities in \eqref{eqten1} and \eqref{eqFeller}.

Let
$f\in\Lip_b(Z)\cap L^2(Z,\mm)$. Keeping $y$ initially fixed,
the second identity in \eqref{eqten1} tells us that
$x\mapsto\sP t f(x,y)=(\sP t f)^y(x)$ is the mean w.r.t. $y'$,
weighted with $\sfH_t^Y\delta_y$,
of the functions $\sP t^Xf^{y'}(x)$. Hence, the convexity of the slope gives
\[
\bigl\llvert \rmD(\sP t f)^y\bigr\rrvert (x)\leq \int
_Y\bigl\llvert \rmD\sP t^X f^{y'}
\bigr\rrvert (x)\,\d\sfH_t^Y\delta_y
\bigl(y'\bigr),
\]
where gradients are understood with respect to the first variable. We
can thus
use the H\"older inequality to get
%
\begin{equation}
\label{eqten2} \bigl\llvert \rmD(\sP t f)^y\bigr\rrvert
^2(x)\leq \int_Y\bigl\llvert \rmD\sP
t^X f^{y'}\bigr\rrvert ^2(x)\,\d
\sfH_t^Y\delta_y\bigl(y'
\bigr).
\end{equation}
Now, for $\mm_Y$-a.e. $y'\in Y$ we apply \eqref{eq89} in the space
$X$ to the functions
$f^{y'}$ and use Fubini's theorem to get
%
\begin{equation}
\label{eqten3} \bigl\llvert \rmD\sP t^X f^{y'}\bigr
\rrvert ^2(x)\leq\rme^{-2Kt}P^X_t
\bigl\llvert \rmD f^{y'}\bigr\rrvert _w^2(x)
\qquad\mbox{for $\mm_Y$-a.e. $y'\in Y$.}
\end{equation}
Combining \eqref{eqten2} and \eqref{eqten3} and using once more
\eqref{eqten1} with $g(x,y)=\llvert \rmD f^y\rrvert _w^2(x)$ we get
\[
\bigl\llvert \rmD(\sP t f)^y\bigr\rrvert ^2(x)\leq
\rme^{-2Kt}\int_Y\sP t^X\bigl\llvert
\rmD f^{y'}\bigr\rrvert _w^2(x)\,\d\sH{t}
\delta_y\bigl(y'\bigr)= \rme^{-2Kt}\sP t\bigl
\llvert \rmD f^y\bigr\rrvert _w^2(x).
\]
Repeating a similar argument with the first identity in \eqref
{eqten1} and adding the two inequalities,
we obtain
\[
\bigl\llvert \rmD(\sP t f)^y\bigr\rrvert ^2(x)+\bigl
\llvert \rmD(\sP t f)^x\bigr\rrvert ^2(y)\leq
\rme^{-2Kt}\sP t\llvert \rmD f\rrvert _w^2(x,y).
\]
We conclude that \eqref{eq89} holds in $(Z,{\mathsf d},\mm)$ using the
calculus lemma \cite{AGS11b}, Lem\-ma~6.2, which provides the
information that the square root of
$\llvert \rmD(\sP t f)^y\rrvert ^2(x)+\llvert \rmD(\sP t f)^x\rrvert ^2(y)$ is an upper gradient
of $\sP t f$. It follows that
$\rme^{-Kt}\sqrt{\sP t\llvert \rmD f\rrvert _w^2}$ is an upper
gradient as well;
being continuous, it provides a pointwise upper
bound for the slope.
\end{pf}

Let us now consider the corresponding version of the
tensorization
theorem for Riemannian Energy measure spaces {with a finite upper bound
on the dimension.}

%
%
\begin{theorem}
\label{thmBEtensor}
Let $(X,\tau_X,\cE^X,\mm_X)$, $(Y,\tau_Y,\cE^Y,\mm_Y)$ be
Riemannian Energy measure spaces satisfying
the Bakry--\'Emery conditions
$\BE K{N_X}$ and $\BE K{N_Y}$ respectively, and
let us consider the cartesian Dirichlet form
$\cEZ$ defined by \eqref{eq115} on
$Z=X\times Y$ endowed with the product topology
$\tau=\tau_X\otimes\tau_Y$ and the product measure $\mmZ$ as in
\eqref{eqproduct}.

Then $(Z,\tau,\mmZ,\cEZ)$ is a Riemannian Energy measure
space, it
satisfies the Bakry--\'Emery condition $\BE K{N_X+N_Y}$ and
the induced distance ${\mathsf d}_\cE$ on $Z$ coincides with
the product distance defined in \eqref{eqproduct}.
\end{theorem}

\begin{pf}
It is not restrictive to assume that $N_X, N_Y<\infty$; by the
previous Theorem we already
know that $(Z,\tau,\mmZ,\cEZ)$ is a Riemannian Energy
measure space satisfying $\BE K\infty$,
whose induced distance ${\mathsf d}_\cE$ is given by \eqref{eqproduct};
we want to prove that
\eqref{eqBEKnu.6} holds with $\nu_Z:=\nu_X\nu_Y/(\nu_X+\nu_Y)$ where
$\nu_X:=N_X^{-1}$ and $\nu_Y:=N_Y^{-1}$.
We argue as in
\eqref{eqten3}, observing that for $\mm_Y$-a.e. $y'\in Y$
\eqref{eqBEKnu.6} and \eqref{eqG0} yield
\begin{eqnarray}
\bigl\llvert \rmD\sP t^X f^{y'}\bigr\rrvert
_{{w}}^2+ 2\nu_X \rmI_{2K,2}(t) \bigl(
\Delta_X \sP t^X f^{y'} \bigr)^2
\le \rme^{-2Kt} \sP t^X\bigl\llvert \rmD f^{y'}
\bigr\rrvert _w^2
\nonumber
\\
\eqntext{\mbox{for $\mm_X$-a.e. $x\in X$.}}
\end{eqnarray}
Integrating w.r.t. the measure $\sfH^Y_t \delta_y$
in $y'$ and recalling \eqref{eqten1}
and \eqref{eqten2},
we get
%
\begin{equation}
\label{eq94} \bigl\llvert \rmD\bigl(\sP t f
\bigr)^{y}\bigr\rrvert _{{w}}^2(x)+ 2
\nu_X \rmI_{2K,2}(t) \bigl(\Delta_X (\sP t
f)^y\bigr)^2(x) \le \rme^{-2Kt} \sP t \bigl(
\bigl\llvert \rmD f^{y}\bigr\rrvert _w^2
\bigr) (x)\hspace*{-20pt}
\end{equation}
$\mmZ\mbox{-a.e. in }Z$,
where we also used the H\"older inequality
\begin{eqnarray*}
\int_Y \bigl(\Delta_X \sP t^X
f^{y'} \bigr)^2(x) \,\d\sH{t}\delta_y
\bigl(y'\bigr)&\ge& \biggl(\int_Y
\Delta_X \sP t^X f^{y'}(x) \,\d\sH{t}
\delta_y\bigl(y'\bigr) \biggr)^2
\\
&=&\bigl(\Delta_X \sP t f^y\bigr)^2(x).
\end{eqnarray*}
By repeating a similar argument inverting the role of $X$ and $Y$, we
get
%
\begin{equation}
\label{eq95} \bigl\llvert \rmD\bigl(\sP t f
\bigr)^{x}\bigr\rrvert _w^2(y)+ 2
\nu_Y \rmI_{2K,2}(t) \bigl(\Delta_Y (\sP t
f)^x\bigr)^2(y) \le \rme^{-2Kt} \sP t \bigl(
\bigl\llvert \rmD f^{x}\bigr\rrvert _w^2
\bigr) (y)\hspace*{-20pt}
\end{equation}
$\mmZ\mbox{-a.e. in }Z$.
Adding \eqref{eq94} and \eqref{eq95}, and recalling
the elementary inequality
\[
\nu_X a^2+\nu_Yb^2\ge
\frac{\nu_X\nu_Y}{\nu_X+\nu_Y}(a+b)^2 \qquad\mbox{for every }a, b\ge0,
\]
we conclude thanks to the next simple lemma.
\end{pf}

%
\begin{lemma}
\label{lelapcart}
Assume that $f\in\cV$ satisfies
%
\begin{eqnarray}
\label{eq91} %
\bigl(f^y,f^x\bigr)\in D(
\Delta_X)\times D(\Delta_Y)
\nonumber
\\[-8pt]
\\[-8pt]
\eqntext{\mbox{for $\mmZ$-a.e. $(x,y)\in Z$}, \Delta_X
f^y, \Delta_Y f^x\in L^2(Z,
\mmZ).} %
\end{eqnarray}
Then $f\in D(\Delta_Z)$ and
$\Delta_Z f(x,y)=\Delta_X
f^y(x)+\Delta_Yf^x(y)$ for $\mmZ$-a.e. $(x,y)\in Z$.
\end{lemma}

\begin{pf}
If \eqref{eq91} holds, Fubini's theorem
and the very definition of $\Delta_X, \Delta_Y$
yield for every $\varphi\in\cV$
\begin{eqnarray*}
\cE(f,\varphi)&=& \int_Y \cE^X
\bigl(f^y,\varphi^y\bigr)\,\d\mm_Y(y)+ \int
_X \cE^Y\bigl(f^x,
\varphi^x\bigr)\,\d\mm_X(x)
\\
&=&-\int_Y \biggl(\int_X
\Delta_X f^y \varphi^y\,\d
\mm_X \biggr)\, \d\mm_Y- \int_X
\biggl(\int_Y \Delta_Y f^x
\varphi^x\,\d\mm_Y \biggr)\,\d\mm _X
\\
&=& -\int_Z \bigl(\Delta_X f^y+
\Delta_Y f^x \bigr) \varphi\,\d\mm .
\end{eqnarray*}
\upqed
\end{pf}

\subsection{Sturm--Gromov--Hausdorff convergence and stability of the $\mathrm{BE}(K,N)$ condition}

\mbox{}\break\indent\textit{Preliminaries.}
Here and in the following we adopt the notation
$\bar\N:=\N\cup\{\infty\}$.

Let us first recall an equivalent characterization of Sturm--Gromov--Haus\-dorff
(SGH)
convergence of a sequence $(X_n,{\mathsf d}_n,\mm_n)$, $n\in\bar\N$,
of metric measure spaces that is very well adapted to our aims;
for the sake of simplicity, we restrict here to the case
when $\mm_n\in\probt{X_n}$. The general case of $\sigma$-finite measures
satisfying \eqref{eq80} could be attacked
by the techniques developed in \cite{AGMS12}, assuming that
\eqref{eq80} holds uniformly along the sequence.
We refer to \cite{AGS11b}, Section~2.3, \cite{Sturm06I}, Section~3.1,
for other definitions and important properties
of SGH convergence.

%
\begin{definition}[(SGH-convergence)]
\label{defSGH}
Let $(X_n,{\mathsf d}_n,\mm_n)$, $n\in\bar\N$, be
complete and separable metric measure spaces with $\mm_n\in\probt{X_n}$
for every $n\in\bar\N$. We say that
$(X_n,{\mathsf d}_n,\mm_n)$ SGH-converge to $(X_\infty,{\mathsf
d}_\infty
,\mm
_\infty)$
as $n\to\infty$ if
there exist a complete and separable metric space
$(X,{\mathsf d})$ and isometries $\iota_n\dvtx  X_n\to X$, $n\in\bar\N$,
such that $W_2((\iota_n)_\sharp\mm_n,(\iota_\infty)_\sharp\mm
_\infty)\to0$ as $n\to\infty$.
\end{definition}

A more intrinsic approach would state SGH-convergence
for the equivalence classes of metric-measure spaces
induced by measure-preserving isometries:
according to this point of view, two metric-measure spaces
$(X_1,{\mathsf d}_1,\mm_1)$ and $(X_2,{\mathsf d}_2,\mm_2)$ are
isomorphic if there exists an isometry
$\iota\dvtx \supp(\mm_1)\to X_2$ such that $\mm_2=\iota_\sharp\mm_1$.

Here, we do not insist on this aspect, since
owing to the Definition~\ref{defSGH}
we will always consider an effective realization of
such a convergence provided by the space $(X,{\mathsf d})$ and
the system of isometries $(\iota_n)$, $n\in\bar\N$.
Moreover, by identifying
$X_n$ with $\iota_n(X_n)$ in $X$, and
$\mm_n$ with $(\iota_n)_\sharp\mm_n$ in $\probt X$,
it will not restrictive to assume that
%
\begin{equation}
\label{eq96} %
\begin{tabular}{@{\quad\qquad}p{320pt}@{\hspace*{-5pt}}}$\mm_n\in
\probt X$, $X_n\subset X$, ${\mathsf d}_n\equiv{\mathsf
d}$ for every $n\in\bar\N$, $\mm_n$ are converging to $
\mm_\infty$ in $\probt X$ as $n\to\infty$;\end{tabular} %
\end{equation}
one has just to take care that in general
$\mm_n,\mm$ could be not fully supported.

%
\begin{remark}
\label{reminvariance}
It is important to notice that, by construction, the Cheeger energy
is invariant by isometries: considering, for example, the
situation of Definition~\ref{defSGH}, if
$\iota_\infty\dvtx X_\infty\to X$ is an isometric imbedding
of $(X_\infty,{\mathsf d}_\infty)$ in a complete and separable
metric space
$(X,{\mathsf d})$, with $\tilde\mm_\infty:=(\iota_\infty)_\sharp
\mm_\infty$,
the Cheeger energy $\frac{1}2\tilde\cE_\infty$ associated to
$(X,{\mathsf d},\tilde\mm_\infty)$ in $L^2(X,\tilde\mm_\infty)$ satisfies
\[
\tilde\cE_\infty(f)=\cE_\infty(f\circ\iota_\infty)\qquad
\mbox{for every }f\in L^2(X,\tilde\mm_\infty).
\]
Since the composition with $\iota_\infty$ provides an order preserving
isomorphism
between $L^2(X,\tilde\mm_\infty)$ and $L^2(X_\infty,\mm_\infty)$,
it is immediate to check that
$(X_\infty,\tau_\infty,\break\mm_\infty,\cE_\infty)$ satisfies $\BE
KN$ if
and only if
$(X,\tau,\tilde\mm_\infty,\tilde\cE_\infty)$ satisfies $\BE KN$ as
well
(here $\tau$ is the topology induced by ${\mathsf d}$ in $X$).
\end{remark}

We will also need
a few results, strongly related to the theory of Young measures,
concerning
convergence
for sequences of functions defined
in $L^2$-spaces associated to different measures
(see, e.g., \cite{AGS08}, Section~5.{4}).
We first make precise this notion of convergence.

%
%
\begin{definition}
\label{defconvergenceAGS}
Let $(X,{\mathsf d})$ be a complete and separable metric space,
let $(\mm_n)\subset\probt X$, $n\in\bar\N$, be converging in
$\probt X$,\vspace*{1pt}
and consider
a sequence of vector valued
functions $\ff_n \in L^2(X,\mm_n;\R^k)$,
$n\in\bar\N, k\in\N$.
We say that $(\ff_n)$ converges to $\ff_\infty$ as $n\to\infty$ if
%
\begin{equation}
\label{eq119} (\ii\times\ff_n)_\sharp\mm_n
\to(\ii\times\ff_\infty)_\sharp \mm_\infty\qquad \mbox{in
}\probt{X\times\R^k}.
\end{equation}
\end{definition}

We will use three properties stated in the next lemma.

%
%
\begin{lemma}
\label{leitemized}
Let $(X,{\mathsf d})$, $(\ff_n)$ and
$(\mm_n)\subset\probt X$, $n\in\bar\N$, as in Definition~\ref{defconvergenceAGS} above.
{\renewcommand\theenumi{\roman{enumi}}
\renewcommand\labelenumi{(\theenumi)}
\begin{longlist}[(iii)]
\item\label{itemA} \eqref{eq119} is equivalent to the convergence
of each component $f^j_n$, $j=1,\ldots,k$, to $f^j$.
\item\label{itemB} In the scalar case $k=1$, if $f_n$ satisfy
\begin{eqnarray*}
\lim_{n\to\infty}\int_X f_n
\varphi\,\d\mm_n&=& \int_X f\varphi\,\d\mm
\qquad\mbox{for every $\varphi\in\rmC _b(X)$},
\\
\lim_{n\to\infty}\int_X f_n^2
\,\d\mm_n&=& \int_X f^2\,\d\mm,
\end{eqnarray*}
then $f_n$ converges to $f$ according to \eqref{eq119}.
The same conclusion holds if
$f_n\in L^1_+(X,\mm_n)\cap L^\infty(X,\mm_n)$ are uniformly bounded
probability
densities satisfying
\[
f_n\mm_n\weakto f\mm\qquad\mbox{in }\prob
X,\qquad \lim_{n\to\infty}\int_X
f_n\log f_n\,\d\mm_n= \int
_X f\log f\,\d\mm.
\]
\item\label{itemC} Finally, if $\rr\dvtx \R^k\to\R^h$ is a continuous
map with linear growth, and $\ff_n$ converge to $\ff$ according to
\eqref{eq119} then $\rr\circ\ff_n$ converge to $\rr\circ\ff$.
\end{longlist}
}
\end{lemma}

\begin{pf}
(\ref{itemA}) follows by disintegration and
the fact that a probability measure in $\R^k$ is a Dirac mass
if and only if its coordinate projections are Dirac masses;
see, for example, \cite{AGS08}, Lemma~5.3.2.

Property {(\ref{itemB})} is a consequence of the fact that,
for strictly convex functions, equality holds in Jensen's inequality
only when the measure is a Dirac mass. A~detailed argument
is presented in \cite{AGS08}, Theorem~5.4.4 (the
fact that the base space $X$ is a general metric space instead of
an Hilbert space is not relevant here).

The proof of {(\ref{itemC})} is straightforward.
\end{pf}

\textit{Stability of $\BE KN$ under SGH-convergence.}

%
%
\begin{theorem}
\label{thmBE-stability}
Let $(X_n,\tau_n,\cE_n,\mm_n)$,
$n\in\N$,
be
Riemannian energy
measure spaces
satisfying $\BE KN $
with $\mm_n\in\probt{X_n}$ and let us
suppose that, denoting by ${\mathsf d}_n$ the corresponding distances
${\mathsf d}_{\cE_n}$, $(X_n,{\mathsf d}_n,\mm_n)$
converge to $(X_\infty,{\mathsf d}_\infty,\mm_\infty)$
in the Sturm--Gromov--Hausdorff sense
of Definition~\ref{defSGH}.
If $\frac{1}2\cE_\infty$ is the Cheeger energy
in the limit space and $\tau_\infty$ the
topology induced by ${\mathsf d}_\infty$, then
$(X_\infty,\tau_\infty,\mm_\infty,\allowbreak \cE_\infty)$ is a
Riemannian Energy measure
space
satisfying $\BE KN $.
\end{theorem}

\begin{pf}
According to the Definition~\ref{defSGH}
of SGH-convergence and taking Remark~\ref{reminvariance} into
account, it is not restrictive to
assume that \eqref{eq96} holds, so that
all the spaces $X_n$ are subsets of a fixed
complete and separable metric space $(X,{\mathsf d})$,
${\mathsf d}_n$ are the restrictions of
${\mathsf d}$ on $X_n$,
the isometries $\iota_n$ are just the inclusions maps,
$\mm_n$ can be identified with $(\iota_n)_\sharp\mm_n$
and can be considered as measures in $\probt X$
converging to $\mm_\infty$, and the Cheeger energies $\cE_n$ are
Dirichlet forms on $L^2(X,\mm_n)$
satisfying the $\BE KN$ condition.

The case $N=\infty$ follows by the identification Theorem~\ref{thmmain-identification} and
\cite{AGS11b}, Theorem~6.10.
In particular,
the limit Cheeger energy associated to $(X,{\mathsf d},\mm_\infty)$
is a Dirichlet form that we call $\frac{1}2\cE_\infty$
and
the limit space endowed with the Cheeger energy
and the
topology $\tau$ induced by ${\mathsf d}$
is a
Riemannian Energy measure space.

We can thus consider the case $N<\infty$.
%
%
In order to show that $(X,\tau,\mm_\infty,\cE_\infty)$
satisfies $\BE KN$
we will prove that
the distributional characterization
\eqref{eqBEKnu.3} of $\BE KN $ holds
for every $f\in L^2(X,\mm_\infty)$ and
nonnegative $\varphi\in L^\infty(X,\mm_\infty)$.
By standard approximation,
it is also not restrictive to assume
$f\in L^\infty(X,\mm_\infty)$.
Our argument consists in passing to the limit in
the corresponding distributional inequality
written for suitable approximating sequences
in the spaces $(X,\tau,\mm_n,\cE_n)$.

Let us thus denote by $(\sP t^n)_{t\ge0}$ the Markov semigroups\vspace*{2pt}
in $L^2(X,\mm_n)$ with generators {
$\Delta_n:=\Delta_{\cE^n}$},
$n\in\bar\N$.
By \cite{AGS11b}, Lemma~6.12, and
Lemma~\ref{leitemized}(\ref{itemB}), for every
$f,\varphi\in L^\infty(X,\mm)$, $\varphi$ nonnegative,
we can find
sequences $f_n,\varphi_n\in L^\infty(X,\mm_n)$,
$\varphi_n$ nonnegative,
converging to $f,\varphi$ according to Definition~\ref{defconvergenceAGS}.
We can also suppose that $f_n,\varphi_n$ are uniformly bounded
by some constant $C>0$.

Applying Lemma~\ref{leextended-convergence} below,
we get that
for every $t\ge0$ and $s\in[0,t]$
$ \sP{t-s}^nf_n$ converge to $ \sP{t-s}^\infty f$
and $ \sP s^n\varphi_n$ converge to $ \sP s^\infty\varphi$
as $n\to\infty$ according to Definition~\ref{defconvergenceAGS}.

Applying Lemma~\ref{leitemized}(\ref{itemA}) to the function
$\ff_n:=( \sP{t-s}^nf_n,
\sP s^n\varphi_n)$ and choosing the bounded and continuous test function
$\psi(x,r_1,r_2)=r_1^2r_2\rmS_C(r_2)$,\break $(x,r_1,r_2)\in X\times
\R^2$ [with $\rmS_C$ defined as in \eqref{eq10tris}],
we obtain
\begin{eqnarray*}
\lim_{n\to\infty} \int_X \bigl(
\sP{t-s}^n f_n\bigr)^2 \sP s^n
\varphi_n\,\d\mm_n&=& \lim_{n\to\infty}\int
\psi\,\d(\ii\times\ff_n)_\sharp\mm_n= \int\psi
\,\d(\ii\times\ff)_\sharp\mm
\\
&=& \int_X \bigl( \sP{t-s}^\infty f
\bigr)^2 \sP s^\infty\varphi\,\d\mm.
\end{eqnarray*}
A similar argument yields
\[
\lim_{n\to\infty} \int_X \bigl(
\DeltaE^n P^n_{t-s} f_n
\bigr)^2 P^n_s\varphi_n\,\d
\mm_n= \int_X \bigl(\DeltaE
\sP{t-s}^\infty f\bigr)^2 \sP s^\infty\varphi\,\d\mm
\]
{for every }
$t>0, s\in[0,t)$.
We can thus pass to the limit in
the distributional inequality
\eqref{eqBEKnu.3} written for $f_n,\varphi_n$.
\end{pf}

%
%
\begin{lemma}
\label{leextended-convergence}
Let $(X,{\mathsf d})$ be a complete and separable metric space,
$\mm_n\in\probt X$, $n\in\bar\N$, be a converging sequence
such that $(X,{\mathsf d},\mm_n)$ is a $\RCD K\infty$ space
with Cheeger energy $\frac{1}2\cE_n$,
and let us denote by $(\sP t^n)_{t\ge0}$ the Markov semigroups
in $L^2(X,\mm_n)$ with generators
{$\Delta_n:=\Delta_{\cE^n}$},
$n\in\bar\N$.

If $f_n\in L^\infty(X,\mm_n)$
converge to $f_\infty\in L^\infty(X,\mm_\infty)$
according to Definition~\ref{defconvergenceAGS},
with uniformly bounded $L^\infty$ norm,
then
$\sP t^{n} f_n$
converge to
$\sP t^\infty f_\infty$
for every $t\ge0$
and $\Delta_n \sP t^n f_n$
converge to
$\Delta_\infty\sP t^\infty f_\infty$
as $n\to\infty$ for every $t>0$.
\end{lemma}

\begin{pf}
When $f_n$ are probability densities the convergence
of $ \sP t^n f_n$
follows
by applying Lemma~\ref{leitemized}(\ref{itemB}) and
the convergence results of
\cite{AGS11b}, Theorem~6.11
[which shows that $ \sP t^n f_n\mm_n$ converges
to $\sP t^\infty f_\infty\mm_\infty$ in $\probt X$] and
\cite{AGMS12} [which yields the convergence of the entropies
$\operatorname{Ent}_{\mm_n}(f_n\mm_n)\to
\operatorname{Ent}_{\mm}(f_\infty\mm_\infty)$].

The case $f_n\in L^1_+(X,\mm_n)$ can be easily reduced to the previous
one by a rescaling, since $\int_X f_n\,\d\mm_n\to\int_X f\,\d\mm$
by \eqref{eq119}
and $ (\sP t^n)_{t\ge0}$ is mass preserving.

The general case can be proved by decomposing each
$f_n$ into the difference $f_n^+-f_n^-$ of
its positive and negative part, observing that
$f_n^\pm$ converge to $f^\pm_\infty$ thanks to Lemma~\ref{leitemized}(\ref{itemC}).
Thus,
by (\ref{itemA}) it follows that $(\sP t^n f_n^+,\sP t^n f_n^-)$
converge to
$(\sP t^\infty f^+_\infty,\sP t^\infty f^-_\infty)$
and a further application of (\ref{itemC}) yields the convergence
result by the linearity
of the semigroups.

In order to prove the convergence of $\Delta_n \sP t^n f_n$
we still apply (\ref{itemB}) of Lem\-ma~\ref{leitemized}: recall that
$\sP t^n$ are analytic semigroups in $L^2(X,\mm_n)$,
$\Delta_n\sP t^n f_n=\frac\d{\dt} \sP t^nf_n$, and
the uniform estimates
(see, e.g., \cite{Stein70bis}, page~75, step 2)
%
\begin{equation}
\label{eq104} t^j\biggl\llVert \frac{\d^j}{\dt^j} \sP
t^nf_n\biggr\rrVert _{L^2(X,\mm_n)}\le
A_j\llVert f_n\rrVert _{L^2(X,\mm_n)}\qquad\mbox{for
every }t>0, n\in\N\hspace*{-20pt}
\end{equation}
hold with universal constants $A_j$ for every $t>0$. Since we just
proved that
for every $\varphi\in\rmC_b(X)$
the sequence of functions
$\zeta_n(t):=\int_X \sP t^n
f_n \varphi\,\d\mm_n$ converge pointwise
to the corresponding $\zeta$ as $n\to\infty$,
\eqref{eq104} yields that
\[
\lim_{n\to\infty}\zeta_n'(t)= \lim
_{n\to\infty}\int_X \Delta_n \sP
t^n f_n \varphi\,\d\mm_n=
\zeta'(t) =\int_X \Delta_\infty\sP
t^\infty f_\infty\varphi\,\d\mm
\]
for every $t>0$.
The same argument holds for
\[
t\mapsto\int_X \bigl(\Delta_n \sP
t^n f_n\bigr)^2\,\d\mm_n= {
\frac{1}4} \frac{\d^{2}}{\dt^{2}}\int_X \bigl( \sP
t^n f_n\bigr)^2\,\d\mm_n.
\]
\upqed
\end{pf}

\section*{Acknowledgements}
We would like to thank Michel Ledoux for useful discussions on
various aspects of the theory of Dirichlet forms and
$\Gamma$-calculus.


%

\printaddresses

\end{document}